\documentclass[12pt,reqno,final]{amsart}
\usepackage{fullpage}
\usepackage{amsthm}
\usepackage{mathtools}
\usepackage{amsfonts}
\usepackage{amsmath,amssymb}
\numberwithin{equation}{section}
\usepackage{tikz-cd}
\usepackage{bbm}
\usepackage{comment}
\allowdisplaybreaks
\usepackage[margin=0.7in]{geometry}
\usepackage{imakeidx}
\usepackage[normalem]{ulem}
\makeindex
\usepackage{nomencl}
\setlength{\nomlabelwidth}{2.5cm}

\makenomenclature

\usepackage{enumitem,kantlipsum}
\RequirePackage[colorlinks,citecolor=blue,urlcolor=blue,backref=page]{hyperref}

\usepackage{tikz}
\usepackage{todonotes}
\allowdisplaybreaks

\usetikzlibrary{shapes,positioning,intersections,quotes}

\usepackage{datetime2}

\usepackage[marginal]{showlabels}

\makeatletter
\newtheorem*{rep@theorem}{\rep@title}
\newcommand{\newreptheorem}[2]{
\newenvironment{rep#1}[1]{
 \def\rep@title{#2 \ref{##1}}
 \begin{rep@theorem}}
 {\end{rep@theorem}}}
\makeatother

\newcommand{\be}{\begin{equation}}
\newcommand{\ee}{\end{equation}}
\newcommand{\bea}{\begin{eqnarray}}

\newcommand{\eea}{\end{eqnarray}}

\newcommand\bmodif{\begin{modif}}
	\newcommand\emodif{\end{modif}}

\newcommand{\E}{\mathbb{E} }

\newcommand{\remove}[1]{}

\newtheorem{theorem}{Theorem}[section]

\newtheorem{lemma}[theorem]{Lemma}
\newtheorem{prop}[theorem]{Proposition}

\newtheorem{definition}[theorem]{Definition}

\newtheorem{example}[theorem]{ Example}

\theoremstyle{definition}
\newtheorem{remark}{Remark}

\numberwithin{equation}{section}

\numberwithin{equation}{section}

\def\0{{\bf 0}}

\newcommand{\M}{\mathbb {M} }

\def\R{\mathbb{R}}

\def\bBY{\mathbb{Y}}

\newcommand{\1}[1]{\mathbf{1}(#1)}

\newcommand{\red}[1]{\textcolor{red}{#1}}

\providecommand{\keywords}[1]

\title{Existence and Convergence of Interacting Particle Systems on Graphs}
\author{Kuldeep Guha Mazumder$\;^*$}
\thanks{$^*$ Theoretical Statistics and Mathematics Unit, Indian Statistical Institute, Bangalore. \textit{rs\_math1903@isibang.ac.in}}

\begin{document}
\maketitle
\begin{abstract}
\noindent We give a general existence and convergence result for interacting particle systems on locally finite graphs with possibly unbounded degrees or jump rates. We allow the local state space to be Polish, and the jumps at a site to affect the states of its neighbours. The two common assumptions on interacting particle systems are uniform bounds on degrees and jump rates. In this paper, we relax these assumptions and allow for vertices with high degrees or rapid jumps. We introduce new assumptions ensuring that such vertices are placed sufficiently apart from each other and hence the process does not blow up. Our assumptions involve finitude of certain weighted connective constants on the square graph of the underlying graph and our proofs proceed by showing that these assumptions imply non-percolation of the Poisson graphical construction. For some random graph models, we give practically verifiable sufficient conditions under which our assumptions hold almost surely. These conditions involve exponential growth of a fractional moment sum of probabilities of self-avoiding walks from each vertex and that of product moments of fixed powers of jump rates. Using these conditions, we show the existence of interacting particle systems with possibly unbounded jump rates like contact processes, consensus formation models, evolutionary models, etc., on random graphs which can lack uniform bounds on degrees almost surely, e.g., long-range percolations on quasi-transitive graphs, and geometric random graphs on Delone sets.

\noindent \textbf{MSC $\mathbf{2020}$
subject classifications.} Primary: 60K35; Secondary: 60J25, 60J76.\\
\noindent \textbf{Keywords.} Interacting particle systems, Poisson graphical construction, Random graphs, Long-range percolation, Delone sets, Contact process, Consensus formation models, Sandpile models, Connective constant
\end{abstract}

\nomenclature{$u\sim v$}{vertices $u$ and $v$ have an edge between them, \pageref{nom:sim}}
\nomenclature{$u\not\sim v$}{vertices $u$ and $v$ do not have an edge between them, \pageref{nom:nsim}}
\nomenclature{$W<V$}{$W$ is a finite subset of $V$, \pageref{nom:finsub}}
\nomenclature{$\mathcal{N}_v$}{neighbourhood of vertex $v$, \pageref{nom:nbdv}}
\nomenclature{$\mathcal{N}_v^+$}{$2$-neighbourhood of vertex $v$, \pageref{nom:nbdv+}}
\nomenclature{$\mathcal{N}_W$}{neighbourhood of set $W$, \pageref{nom:nbdW}}
\nomenclature{$\mathcal{N}_W^+$}{$2$-neighbourhood of set $W$, \pageref{nom:nbdW+}}
\nomenclature{$\mathcal{V}^2$}{Square graph of graph $\mathcal{V}$, \pageref{nom:V+}}
\nomenclature{$\Delta_{\mathcal{V},v}$}{local connective constant of graph $\mathcal{V}$ at vertex $v$, \pageref{nom:lcc}}
\nomenclature{$\text{SAW}_\mathcal{V}(v)$}{set of all self-avoiding walks from vertex $v$ in graph $\mathcal{V}$, \pageref{nom:sawv}}
\nomenclature{$\text{SAW}_{\mathcal{V},n}(v)$}{set of all self-avoiding walks from vertex $v$ of length $n$ in graph $\mathcal{V}$, \pageref{nom:sawvn}}
\nomenclature{$\text{SAW}_{\mathcal{V},n}^*(v)$}{set of all self-avoiding walks from vertex $v$ of length $n$ in graph $\mathcal{V}^2$ which are remnants of some self-avoiding walk from $v$ in graph $\mathcal{V}$, \pageref{nom:sawnv*}}
\nomenclature{$\mathbb{Y}$}{Polish local state space, \pageref{nom:lss}}
\nomenclature{$\mathcal{C}$}{set of all cylinder functions on $\mathbb{Y}^V$, \pageref{nom:fincrd}}
\nomenclature{$\mathcal{C}(A)$}{set of all cylinder functions on $\mathbb{Y}^V$ with support $A$, \pageref{nom:Acrd}}
\nomenclature{$\Theta_{v,(n)}$}{$n$-step simple jump trail rate from vertex $v$ of graph $\mathcal{V}$, \pageref{eq:ndjrt}}
\nomenclature{$\Theta_{v,(n)}^*$}{$n$-step double jump trail rate from vertex $v$ of graph $\mathcal{V}$, \pageref{eq:ndjrt}}
\nomenclature{$\Theta_{v}^*$}{double jump trail rate from vertex $v$ of graph $\mathcal{V}$, \pageref{eq:djrt}}
\nomenclature{$\Theta_{v}$}{simple jump trail rate from vertex $v$ of graph $\mathcal{V}$, \pageref{eq:djrt}}
%\nomenclature{$\mathcal{V}^{\text{LRP}}_{\beta,\textit{\textbf{J}}}$}{long-range percolation graph, \pageref{nom:lrp}}
%\nomenclature{$\mathcal{V}^\text{GRG}_{Z}$}{geometric random graph, \pageref{nom:grg}}
\nomenclature{$E_t(v,w)$}{vertex $v$ affects vertex $w$ before time $t$, \pageref{nom:etvw}}
\nomenclature{$E_t'(v,w)$}{vertex $v$ directly affects vertex $w$ before time $t$, \pageref{nom:etvw'}}

\section{Introduction}\label{sec:intro}
In this paper, we study the existence of interacting particle systems on locally finite graphs and show that the infinite-volume dynamics of such systems can be obtained as the limit of the corresponding processes defined on an increasing sequence of finite subgraphs. By an interacting particle system, we refer to a Markov process $\xi := \big((\xi_t(v))_{v \in V}\big)_{t \geq 0}$ on $\mathbb{Y}^V$ where $V$ is the vertex set of the underlying locally finite (but possibly infinite) graph $\mathcal{V}$ and $\mathbb{Y}$ (called the \emph{local state space}, i.e., the space of configurations at any vertex of $\mathcal{V}$) is a Polish space. For each $v\in V$, $(\xi_t(v))_{t\geq0}$ is a pure jump process whose jump rates and evolution of states depend on $(\xi_t(\cdot))_{t\geq0}$ at neighbouring vertices. In other words, the updates at any site affect both the local jump rates and the states of its neighbours. Three basic questions regarding such models on infinite graphs are existence, uniqueness and convergence (of the model on a finite approximating sequence of graphs to that on the infinite graph). To answer these questions, the two common assumptions made in the literature \cite{Durrett1995,liggett2012interacting,Penrose2008existence} are that $\mathcal{V}$ has uniformly bounded degrees and that the jump rates of the process are uniformly bounded over the entire graph. Intuitively, these assumptions ensure that ``badly behaving vertices" (i.e., those with high degrees or rapid jumps) are absent from the graph and thus the process does not ``{blow up}" (in a sense that will be explained in Subsection \ref{subs:graphical}). However, these assumptions fail for interacting particle systems on many graphs, including many random graphs. In this paper, we show the existence and convergence of various interacting particle systems under weaker assumptions than those mentioned above. Informally, we require that vertices of high degrees or rapid jumps are not too close to each other, thereby ensuring that the process does not {blow up}. This interplay between the jump rates and the geometry of the graph is formalized in Theorem \ref{thm:exist_cvg_ips}, wherein it is captured by the finitude of {jump trail rate}s (defined in Definition \ref{def:djrt}) from every vertex. {In Theorem \ref{thm:randips}, we give two practically verifiable sufficient conditions for interacting particle systems on random graphs with deterministic vertex sets, under which the assumptions of Theorem \ref{thm:exist_cvg_ips} hold almost surely. The first condition concerns the geometry of the random graph -- for a fixed $p>1$ and for each $v\in V$, the $\ell^{1/p}$-sum of the probabilities of self-avoiding walks of length $n$ starting from $v$ grows at most exponentially in $n$. The second condition concerns the assignment of jump rates -- the $p'$-th product moment of the jump rates at any $n$ distinct vertices grows at most exponentially in $n$, where $p'$ is the H\"older conjugate of $p$. Since $p>1$, the first condition is slightly stronger than the expected number of self-avoiding walks of length $n$ from each vertex growing exponentially in $n$.} This condition should hold for many random graphs, including the ones with almost surely finite connective constants (and hence those with almost surely uniformly bounded degrees). In Section \ref{sec:rgex} we discuss examples of two such random graph models -- long-range percolation on deterministic graphs like quasi-transitive graphs, and geometric random graphs on various well-spaced point sets in the Euclidean space, like various Delone sets. These random graphs can lack a uniform degree bound almost surely. Further, although the jump trail rates introduced in Theorem \ref{thm:exist_cvg_ips} may involve the square graph, the conditions in Theorem \ref{thm:randips} are on the original graph, which makes the latter theorem simpler in terms of applicability. We demonstrate our results by giving examples of interacting particle systems where the jump rates proportional to the vertex degrees, such as consensus formation models, contact processes, interacting urn models, certain sandpile models, and Bak-Sneppen-type evolution models; see Section \ref{sec:ipsex}.% At this point, we note that Penrose \cite[Section $5.2$]{Penrose2008existence} remarks about sandpile-type models which fail to satisfy the assumptions of uniformly bounded degrees and jump rates, suggesting them as open problems.

We shall now place our results and proof techniques in the context of the existing literature. There are different models of interacting particle systems but we focus only on those on graphs not evolving in time. Some classical models of such systems are the voter model, the contact process, the exclusion process and birth-death processes. One can find discussions on these models in \cite{liggett2012interacting,durrett1980contact,Penrose2008existence,bezborodovthesis}. Some other models of interest include zero range processes \cite{evans2005}, adsorption models \cite{Evans1993,Penrose2008existence,Shcherbakov_2024}, sandpile models \cite{Dhar_1999,corry2018divisors}, Bak-Sneppen models \cite{PhysRevE.53.414,benari2017local}, etc. An exposition of the literature on interacting particle systems specifically on random graphs can be found in \cite{frank2024}.

It is commonly assumed that the local state space $\mathbb{Y}$ is finite \cite{Durrett1995,liggett2012interacting} or compact \cite{fristedtgray} and that the jumps at a site affect only the jump rates and not the states of its neighbours. Penrose \cite{Penrose2008existence} considers Polish state space and allows for jumps occurring at any given site to affect the states of its neighbours as well. However, as already mentioned, in most of the present literature where the existence-type results have been studied, including \cite{Penrose2008existence}, two assumptions seem to be at the core -- the uniform degree bound and the uniform bound on the jump rates. There are a few exceptions to this. Bezborodov \cite[Section $5.2$]{bezborodovthesis} studies existence and uniqueness of various birth-and-death process using SDE methods. The vertex set is $\mathbb{Z}^d$ and the local state space is $\mathbb{N}_0$, the set of all non-negative integers, with discrete topology. The jump rates are allowed to be unbounded, with the birth rate satisfying certain monotonicity and Lipschitz properties and the death rate satisfying a monotonicity property, all described therein. Both the birth and the death rates at any vertex are local, i.e., depend only on the vertices within a finite distance. This work is extended by Bezborodov, Kondratiev and Kutoviy \cite{Bezborodov_2019}, allowing the jump rates to be unbounded and to have long-range dependencies, with the dependence decaying sufficiently fast with the distance between the vertices. Gantert and Schmid \cite{gantert2020} use graphical construction (to be described in Subsection \ref{subs:graphical}) to show that simple exclusion processes exist on specific random graphs (augmented Galton-Watson trees) which may lack a uniform degree bound. The jump rates are uniformly bounded. The local state space is the $2$-point set $\{0,1\}$. The proof exploits specific features of the particle system. Ganguly and Ramanan \cite{ganguly2022hydrodynamic} use graphical construction-based arguments to show that a sequence of particle systems defined on a corresponding sequence of finite random graphs converge (in an appropriate sense) to a possibly non-Markovian  particle system (called the \emph{hydrodynamic limit}) on the infinite graph, given the graph sequence converges to the infinite graph in local weak sense, and the limiting infinite graph satisfies a certain ``finite dissociability" property almost surely. The class of dissociable graphs include bounded degree graphs as well as some Galton-Watson trees which may not have a uniform degree bound. The jump rate kernels satisfy certain regularity assumptions. The local state space is $\mathbb{Z}$ equipped with discrete topology. Our work is complementary to \cite{bezborodovthesis}, \cite{Bezborodov_2019}, \cite{gantert2020} and \cite{ganguly2022hydrodynamic} -- the assumptions made in our work are not subsumed in them and vice versa. We are not aware of any existence-type work on particle systems on graphs that simultaneously allows unbounded vertex degrees, unbounded jump rates and general Polish local state spaces, and to the best of our knowledge, our work is the first one in the current literature to do so.
 
We build our work mostly on the framework and the proof techniques in \cite{Penrose2008existence}, using the idea of graphical construction. \emph{Graphical construction} (to get an informal idea of the concept, see introduction of Section \ref{sec:graph}), introduced by T. E. Harris in \cite{Harris1972}, is a fundamental tool to prove the existence of interacting particle systems. Briefly, a graphical construction is an oriented space-time graph built on the underlying graph $\mathcal{V}$ using a given set of jump rate kernels, that captures the dependencies among the vertices of $\mathcal{V}$. We adapt the graphical construction used by Penrose in \cite{Penrose2008existence}, which is in turn a refinement of Harris' technique to account for the fact that the dependencies are between $2$-neighbours and not only immediate neighbours. We further refine the former by making the graphical construction ``inhomogeneous", this is to account for the fact that there is no uniform bound on the jump rates in our case. The idea of proving existence using graphical construction, in broad strokes, is that when a graphical construction does not percolate in any finite time, one can construct a family of interacting particle systems on $\mathcal{V}$. The search for sufficient conditions for the existence of interacting particle systems thus reduces to that for the non-percolation of the graphical construction in any finite time, we do this in Proposition \ref{prop:exstcggc}. We show that non-percolation implies existence of the process, and construct the process explicitly in Proposition \ref{prop:ggcexstc}. We also show that the particle system on the infinite graph may be viewed as a limit (in a sense to be specified in Theorem \ref{thm:exist_cvg_ips}) of a sequence of particle systems on a corresponding sequence of finite ‘windows’ $W_n\subset V$ such that $W_n\uparrow V$. We note that our proof is a constructive one, i.e., we explicitly construct a family of Markov processes using graphical construction. That being said, graphical construction is not the only tool used to prove existence-type results. Another commonly used technique is studying the processes as solutions to stochastic differential equations. This can be found in the more recent works that deal with specific models (e.g., birth-death-displacement models, CSA models,
etc.) like \cite{bezborodovthesis,Bezborodov_2019}, etc.% Though our framework also includes some of these examples, we shall not follow this approach here. 
 
Our work does not study uniqueness of the particle systems. It will be of interest to have a result on uniqueness for a general model under assumptions weaker than those of uniformly bounded degrees and jump rates. Another assumption that we would like to relax is the local pure jump nature of the process. A simple extension would be to allow for deterministic evolution of the system in between the jump-type events, the deterministic evolution at one site affecting only the jump rates of the neighboring sites, e.g., \cite{sidora1995}. Another way to extend our work is to study the existence and uniqueness of particle systems which are possibly non-Markovian such as those in \cite{ganguly2022hydrodynamic}. Our work also does not cover particle systems (e.g., birth-death, displacement, migration, adsorption, deposition, etc.) in continuum (i.e., in $\mathbb{R}^d$ or in a general metric space). These processes can be seen as stochastic processes taking values in the set of all counting measures on the continuum space they live in. Such an approach is different from ours. It does not involve any graph structure, and hence an equivalent of the uniform degree bound condition in such cases is that the interactions should be uniformly bounded, i.e., each particle interacts with other particles that are within a fixed distance. There are some works on particle systems in continuum where the uniform bound assumptions on degree and / or on jump rates have been relaxed, e.g., \cite{garciakurtz2006,Bezborodov_2022, fernandez2004spatial}. Penrose \cite{Penrose2008existence} uses some discretization techniques to reduce the continuum framework of such models to a lattice framework, and this is an approach that we intend to pursue as a future extension of our current work. Our results apply to random graphs with deterministic vertex sets, and we believe that they can be extended to more general random graphs. In fact, that we can construct interacting particle systems on random graphs on various well-spaced point sets in the Euclidean space is strongly suggestive of the possibility that we can indeed construct interacting particle systems on random graphs with random vertex sets, e.g., germ-grain-type random graphs on point processes. This is something that we plan to work on in future. Another important class of models that appear difficult to be covered by our results are interacting particle systems with genuinely state-dependent jump rates, e.g., sandpile-type models where the total jump rate at a vertex is proportional to the mass at that vertex. Extending the present framework to such systems would require combining the geometric non-percolation approach developed here with quantitative control on the evolution of local states along the dynamics, so that the graphical construction can be controlled through suitably truncated, state-dependent jump rates.
 
We now describe the structure of the paper. In Section \ref{sec:prelim}, we review some concepts regarding Markov processes and graph theory. In Section \ref{sec:excon}, we describe the framework, state and discuss Theorem \ref{thm:exist_cvg_ips}, our central result on existence and convergence of interacting particle systems in a deterministic setting. Theorem \ref{thm:exist_cvg_ips} finds many applications in random graphs. Therefore, in Section \ref{sec:exconr}, we discuss Theorem \ref{thm:exist_cvg_ips_r}, a restatement of Theorem \ref{thm:exist_cvg_ips} in a random setting. The assumptions of Theorem \ref{thm:exist_cvg_ips_r} involve the square graph and are difficult to verify in examples directly, and therefore we further state and prove Theorem \ref{thm:randips}, a consequence of Theorem \ref{thm:exist_cvg_ips_r} that involves only the original graph and can be used more directly. In Sections \ref{sec:rgex} and \ref{sec:ipsex}, we respectively discuss examples of some random graphs and interacting particle systems on them satisfying conditions of Theorem \ref{thm:randips}. In Section \ref{sec:graph}, we show that the conditions of Theorem \ref{thm:exist_cvg_ips} imply non-percolation of graphical construction in time (Proposition \ref{prop:exstcggc}), and the latter implies the conclusions of Theorem \ref{thm:exist_cvg_ips} (Proposition \ref{prop:ggcexstc}) -- thereby proving Theorem \ref{thm:exist_cvg_ips} in effect. A list of conventions (symbols, notations, etc.) which are non-standard and are used in this paper have been added at the end for quick reference.

\section{Preliminaries}\label{sec:prelim}
In this section we recall some notions related to continuous-time Markov processes (Subsection \ref{subs:markov}) and also some graph theoretic notions (Subsection \ref{subs:graph}) particularly those related to self-avoiding walks. Throughout this paper, $\mathbb{R}$, $\mathbb{R}_{\geq 0}$, $\mathbb{N}$ and $\mathbb{N}_0$ will denote the sets of real numbers, non-negative real numbers, positive integers and non-negative integers respectively. We will assume henceforth that all random elements discussed throughout this paper are defined on a suitable probability space $(\Omega,\Sigma,\mathbb{P})$. Throughout the paper, the phrases ``almost surely", ``almost every", etc. will mean ``$\mathbb{P}$-almost surely", ``$\mathbb{P}$-almost every", etc., unless otherwise mentioned.% $\mathbbm{1}_A(\cdot)$ and $\delta_x(\cdot)$ will denote the indicator function and the Dirac measure respectively, i.e., for any event $\mathcal{A}$ and any realization $\omega\in\Omega$, $\mathbbm{1}_\mathcal{A}(\omega)$ is the indicator that the event

\subsection{Markov Process Notions}\label{subs:markov}

Let $(\mathbb{X},\mathcal{X})$ be a measurable space. A \emph{transition kernel}\index{transition kernel} on $\mathbb{X}$ is a map $\mu:\mathbb{X}\times\mathcal{X}\to\mathbb{R}$\label{nom:real} such that $\mu(\cdot,\mathcal{A})$ is measurable for each $\mathcal{A}\in\mathcal{X}$ and $\mu(x,\cdot)$ is a measure on $\mathbb{X}$ for each $x\in\mathbb{X}$. A transition kernel $\mu$ is called a \emph{probability kernel}\index{probability kernel} if $\mu(x,\cdot)$ is a probability measure on $\mathbb{X}$ for each $x\in\mathbb{X}$. A \emph{Markovian family of transition distributions}\index{Markovian family of transition distributions} on $\mathbb{X}$ is a family of probability kernels $(\mu_t)_{t\geq0}$ with $\mu_0(x,\cdot)\equiv\delta_x$\label{nom:equiv} for each $x\in\mathbb{X}$ and with $\int_\mathbb{X}\mu_t(y,\mathcal{A})\mu_s(x,dy)=\mu_{s+t}(x,\mathcal{A})$ for all $\mathcal{A}\in\mathcal{X}$, $x\in\mathbb{X}$, and $s,t\geq0$. The associated \emph{transition semigroup of operators}\index{transition semigroup} $(P_t)_{t\geq0}$ of $(\mu_t)_{t\geq0}$ is given by,\label{nom:def} $P_tf(x):=\int_\mathbb{X}f(y)\mu_t(x,dy)$, defined for all bounded measurable $f:\mathbb{X}\to\mathbb{R}$. The \emph{generator}\index{generator} of this  semigroup is given by,
\begin{equation}
Gf:=\lim\limits_{t\to0}\frac{P_tf-f}{t},
\end{equation}
defined for all bounded measurable $f:\mathbb{X}\to\mathbb{R}$ for which the uniform limit exists.
 
Now, let $(\mathbb{X},\mathcal{X})$ be a topological space equipped with the Borel $\sigma$-algebra, $(\mu_t)_{t\geq0}$ be a Markovian family of transition distributions on $\mathbb{X}$, and $(\mathcal{G}_t)_{t\geq0}$ be a filtration. A \emph{Markov family of processes}\index{Markov family of processes} on $\mathbb{X}$ with transition distributions $(\mu_t)_{t\geq0}$ is a family of stochastic processes $\big((\xi_t^x)_{t\geq0}\big)_{x\in \mathbb{X}}$, with c\`adl\`ag (i.e., right-continuous with left limits) sample paths, adapted to $(\mathcal{G}_t)_{t\geq0}$, and satisfying $\mathbb{P}\big(\xi_0^x=x\big)=1$ and $\mathbb{P}\big(\xi_{s+t}^x\in \mathcal{A}|\mathcal{G}_s\big)=\mu_t(\xi_s^x,\mathcal{A})$ for each $(x,\mathcal{A})\in \mathbb{X}\times\mathcal{X}$ and each $t,s\geq0$.

\subsection{Graph Theoretic Notions}\label{subs:graph}
Let $\mathcal{V}=(V,E)$ be an undirected, locally finite, countable, simple graph with no loops. The vertices of $\mathcal{V}$ will often be referred to as \emph{sites}. For any $v\in V$, we denote the degree of $v$ in the graph structure of $\mathcal{V}$ as $\deg_\mathcal{V}(v)$. For any $u,v\in V$, we will write $u\sim v$\label{nom:sim} and $u\not\sim v$\label{nom:nsim} to respectively denote $(u,v)\in E$ or $(u,v)\not\in E$. For any $W\subset V$, we will write $W<V$\label{nom:finsub} to denote that $W$ is finite. For any $v,w\in V$, we define $\text{dist}_{\mathcal{V}}(v,w)$\label{nom:dist} as the graph distance between $v$ and $w$ in the graph structure of $\mathcal{V}$. {For $v\in V$, by $\mathcal{N}_{\mathcal{V},v}$\label{nom:nbdv} and $\mathcal{N}_{\mathcal{V},v}^+$\label{nom:nbdv+} we respectively denote the \emph{neighbourhood} and the \emph{$2$-neighbourhood} of $v$, i.e., $\mathcal{N}_{\mathcal{V},v}:=\{v\}\cup\big\{w\big|w\in V\text{ and }v\sim w\big\}$, and $\mathcal{N}_{\mathcal{V},v}^+:=\big\{w\big|w\in V\text{ and }\mathcal{N}_v\cap\mathcal{N}_w\neq\emptyset\big\}$. Similarly for any $W\subset V$, we define the \emph{neighbourhood} and \emph{$2$-neighbourhood} of $W$ respectively, as $\mathcal{N}_{\mathcal{V},W}:=\bigcup_{v\in W}\mathcal{N}_{\mathcal{V},v}$\label{nom:nbdW} and $\mathcal{N}_{\mathcal{V},W}^+:=\bigcup_{v\in W}\mathcal{N}_{\mathcal{V},v}^+$\label{nom:nbdW+}.} We drop the subscript $\mathcal{V}$ from all the above notations when the underlying graph $\mathcal{V}$ is understood.
 
The \emph{square graph}\index{square graph} $\mathcal{V}^2$\label{nom:V+} of $\mathcal{V}$ is a graph with vertex set $V$ and edge set obtained by adding edges between all $2$-neighbours in $\mathcal{V}$, while retaining all the edges already in $E$. If $\mathcal{V}$ is locally finite, then $\mathcal{V}^2$ is locally finite as well. For any $v\in V$ and $n\in\mathbb{N}$\label{nom:natural}, by $\text{SAW}_{\mathcal{V},n}(v)$\label{nom:sawvn} we denote the set of all $n$-length self-avoiding walks (SAWs) starting from $v$ in $\mathcal{V}$ and $\text{SAW}_\mathcal{V}(v):=\cup_{n\in\mathbb{N}}\text{SAW}_{\mathcal{V},n}(v)$\label{nom:sawv}. The \emph{local connective constant}\index{local connective constant} of $\mathcal{V}$ at $v\in V$ is defined as
\label{nom:lcc}$\Delta_{\mathcal{V},v}:=\limsup_{n\to\infty}\big|\text{SAW}_{\mathcal{V},n}(v)\big|^{\frac{1}{n}}$.

\section{Existence and Convergence of Interacting Particle Systems}\label{sec:excon}
In Subsection \ref{subs:gen} we describe our general framework -- the underlying graph and the family of Markov processes. In Subsection \ref{subs:excon} we state and discuss the main result of this paper on existence and convergence of interacting particle systems on deterministic graphs -- Theorem \ref{thm:exist_cvg_ips}.

\subsection{General Framework}\label{subs:gen}
We will adapt the framework introduced in \cite[Section $2.1$]{Penrose2008existence}. \\

\paragraph*{\textbf{Graph}} The underlying graph $\mathcal{V}=(V,E)$ is an undirected, locally finite, countable (possibly finite), simple graph with vertex set $V$, and with no loop. Throughout the rest of the paper, a ``graph" will always refer to an undirected, countable, simple graph with no loop. \\

\paragraph*{\textbf{State space}} $\mathbb{Y}$\label{nom:lss} is a Polish space which we call the \emph{local state space}\index{local state space}, i.e., the space of configurations at a single site. We endow $\mathbb{Y}^V$ with the product topology. Since $\mathbb{Y}$ is Polish, countable product of Polish spaces being Polish, $\mathbb{Y}^V$ is Polish as well. Our interacting particle systems are families of continuous-time Markov processes on the state space $\mathbb{Y}^V$, which we denote as $(\xi^x)_{x\in\mathbb{Y}^V}:=((\xi_t^x)_{t\geq0})_{x\in\mathbb{Y}^V}$. For any $t\geq0$, any $v\in V$, and any $x\in\mathbb{Y}^V$ $\xi_t^x(v)$ and $\xi_t^x$ denote the local state at $v$ and global state of the process respectively, both at time $t$ and both with initial global state $x$. The neighborhood $\mathcal{N}_v$ represents the set of all sites that can be instantaneously affected by changes at $v$ or that affect the jump rate at site $v$. \\

\paragraph*{\textbf{Jump Rate Kernels and Local Generators}}
In this paper, we discuss only those interacting particle systems which evolve as pure jump processes at each vertex, such that whenever any vertex updates its state, it can influence both the jump rates and states of its neighbouring vertices. To model this behaviour, we start with a pure jump process on the neighbourhood of each vertex of $\mathcal{V}$. In other words, for each $v\in V$, we are given a transition kernel $\alpha_v^*:=\alpha_v^*(\cdot,\cdot)$ on $\mathbb{Y}^{\mathcal{N}_v}$, called the \emph{(local) jump rate kernel at}\index{jump rate kernel} $v$. We note at this point that these jump rate kernels are not, in general, probability kernels. For any $v\in V$, $\alpha_v^*$ corresponds to the generator $G_v^*$ of a pure jump process on $\mathbb{Y}^{\mathcal{N}_v}$, given by,
\begin{equation}\label{eq:locgendef}
G_v^*f(x):=\int_{\bBY^{\mathcal{N}_v}}\big(f(y)-f(x)\big)\alpha_v^*(x,dy),
\end{equation}
for every bounded measurable $f: \bBY^{\mathcal{N}_v}\to\mathbb{R}$, and every $x \in  \bBY^{\mathcal{N}_v}$. For each $v\in V$ and each $x\in\mathbb{Y}^{\mathcal{N}_v}$, we denote $\alpha_v^*(x):=\alpha_v^*(x,\mathbb{Y}^{\mathcal{N}_v})$, the total measure of $\alpha_v^*(x,\cdot)$. We assume that for every $v\in V$,\label{nom:indicator}
\begin{equation}\label{eq:jrknbd} 
c_v:=\sup_{x\in\mathbb{Y}^{\mathcal{N}_v}}\alpha_v^*(x)<\infty,
\end{equation}
i.e., the jump rates are \emph{sitewise bounded}. If, in addition $\sup_{v\in V}c_v<\infty$, we call the jump rates \emph{uniformly bounded}. We do not assume uniform boundedness of the jump rates -- we recall that this is one of the two key assumptions that we are trying to relax in this paper. \\

\paragraph*{\textbf{Generator on $\bBY^V$}}
Once we have a pure jump process on the neighbourhood of each vertex of $\mathcal{V}$, we need these jump processes to interact. To make this rigorous, for each $v\in V$, we use $G_v^*$'s to define the generator $G_v$ of a jump process on $\bBY^V$ that is ``frozen outside $\mathcal{N}_v$", in the following manner. Let $f:\bBY^V\to\mathbb{R}$ be a bounded measurable function for which there exists $A<V$, such that for any $x,y\in\mathbb{Y}^V$, if $x(v)=y(v)$ for every $v\in A$, then $f(x)=f(y)$. We call such a function a \emph{cylinder function with support} $A$. We denote the class of all cylinder functions with support $A<V$ as $\mathcal{C}(A)$. We further denote $\mathcal{C}:=\cup_{A<V}\mathcal{C}(A)$, and simply call it the class of \emph{cylinder functions} on $\mathbb{Y}^V$, i.e., the class of bounded, measurable, real-valued functions on $\mathbb{Y}^V$ which depend on finitely many coordinates.\label{nom:fincrd}\label{nom:Acrd} Let for any $x\in\mathbb{Y}^V$ and $A\subset V$, $x|_A$ denote the restriction of $x$ on $A$. Given $v\in V$, $x\in \bBY^V$, and $y\in \bBY^{\mathcal{N}_v}$, let $x|v|y$ be the element of $\bBY^V$ which agrees with $x$ outside $\mathcal{N}_v$, and with $y$ inside $\mathcal{N}_v$. For $f\in\mathcal{C}$, let the function $f_x^v:\bBY^{\mathcal{N}_v}\to\mathbb{R}$ be given by $f_x^v(y):=f(x|v|y)$. We set $G_vf(x)=G_v^*f_x^v(x|_{\mathcal{N}_v})$. Then $G_v$ is the generator of a jump process on $\bBY^V$ with jump rate kernel $\alpha_v$, given by,
\begin{equation}\label{eq:jrkglob}
\alpha_v(x,x|v|dy)=\alpha_v^*(x|_{\mathcal{N}_v}, dy);\text{ and }\alpha_v(x,dz)=0\text{ if }z|_{V\setminus\mathcal{N}_v}\neq x|_{V\setminus\mathcal{N}_v}
\end{equation}
The family $(\alpha_v)_{v\in V}$ will be referred to as a family of \emph{global jump rate kernels} on $\mathcal{V}$. Although these kernels are defined on the global state space $\mathbb{Y}^V$, in effect they are the same as the local ones, with the global jump rate kernel $\alpha_v$ only prescribing jumps in $\mathbb{Y}^{\mathcal{N}_v}$. In fact, we can show that for every $v\in V$,
\begin{equation}\label{eq:jrknbdglob}
c_v=\sup_{x\in\mathbb{Y}^V}\alpha_v(x)<\infty.
\end{equation}
For $v\in V$, $f\in\mathcal{C}$ and $x\in\mathbb{Y}^V$, by \eqref{eq:locgendef},
\begin{align}
&G_vf(x)=G_v^*\big(f_x^v(x|_{\mathcal{N}_v})\big)=\int_{\bBY^{\mathcal{N}_v}}\big(f_x^v(y)-f_x^v(x|_{\mathcal{N}_v})\big)\alpha_v^*(x|_{\mathcal{N}_v},dy)\nonumber\\&=\int_{\bBY^{\mathcal{N}_v}}\big(f(x|v|y)-f(x)\big)\alpha_v^*(x|_{\mathcal{N}_v}, dy).\label{eq:genclosed}
\end{align}
Intuitively, $G_v$ is the generator of a pure jump process on $\mathbb{Y}^V$ that evolves only inside $\mathbb{Y}^{\mathcal{N}_v}$. In the following lemma we recall a few basic properties of the operator $G_v$ for $v\in V$, from \cite[Lemma $6.1$]{Penrose2008existence}, with \eqref{eq:genbd} slightly modified to account for the fact that we do not have the assumption of uniformly bounded jump rates.
\begin{lemma}
For any $A<V$ and any $f\in\mathcal{C}(A)$,
\begin{equation}
G_vf\equiv0,\forall v\in V\setminus\mathcal{N}_A,\label{eq:gen0}
\end{equation}
\begin{equation}
G_vf\in\mathcal{C}(A\cup\mathcal{N}_v),\forall v\in\mathcal{N}_A,\label{eq:gendom}
\end{equation}
\begin{equation}
||G_vf||\leq2c_{\mathcal{N}_A}||f||,\forall v\in V,\label{eq:genbd}
\end{equation}
where $c_{W}:=\max\limits_{v\in W}\{c_v\}$, for any $W<V$, and $||\cdot||$ denotes the uniform norm.
\end{lemma}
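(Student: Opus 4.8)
The plan is to read everything off the closed form \eqref{eq:genclosed}, namely $G_vf(x)=\int_{\mathbb{Y}^{\mathcal{N}_v}}\big(f(x|v|y)-f(x)\big)\,\alpha_v^*(x|_{\mathcal{N}_v},dy)$, and to verify the three assertions in turn; the argument is a careful bookkeeping of which coordinates the relevant configurations agree on, and is the analogue of \cite[Lemma 6.1]{Penrose2008existence} adapted to sitewise (rather than uniformly) bounded rates.

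For \eqref{eq:gen0}, I would first note that $v\in V\setminus\mathcal{N}_A$ forces $\mathcal{N}_v\cap A=\emptyset$: if $w\in\mathcal{N}_v\cap A$ then either $w=v$, so $v\in A\subseteq\mathcal{N}_A$, or $w\sim v$ with $w\in A$, so $v\in\mathcal{N}_w\subseteq\mathcal{N}_A$, either way contradicting $v\notin\mathcal{N}_A$. Hence $A\subseteq V\setminus\mathcal{N}_v$; since $x|v|y$ agrees with $x$ off $\mathcal{N}_v$, it agrees with $x$ on $A$, and as $f\in\mathcal{C}(A)$ we get $f(x|v|y)=f(x)$ for every $y\in\mathbb{Y}^{\mathcal{N}_v}$. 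Thus the integrand in \eqref{eq:genclosed} vanishes identically and $G_vf(x)=0$.

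For \eqref{eq:gendom}, boundedness and measurability of $G_vf$ are routine (it is the difference of $f$ and an integral of $f$ against the transition kernel $\alpha_v^*$, and $|G_vf(x)|\le 2\|f\|c_v<\infty$ by \eqref{eq:jrknbd}); the substance is that it depends only on coordinates in $A\cup\mathcal{N}_v$. So suppose $x,x'\in\mathbb{Y}^V$ agree on $A\cup\mathcal{N}_v$. Then $x|_{\mathcal{N}_v}=x'|_{\mathcal{N}_v}$, so $\alpha_v^*(x|_{\mathcal{N}_v},\cdot)=\alpha_v^*(x'|_{\mathcal{N}_v},\cdot)$, and it suffices to check $f(x)=f(x')$ and $f(x|v|y)=f(x'|v|y)$ for every $y$. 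The first holds since $x,x'$ agree on $A$. For the second, split $A=(A\cap\mathcal{N}_v)\cup(A\setminus\mathcal{N}_v)$: on $A\cap\mathcal{N}_v$ both $x|v|y$ and $x'|v|y$ equal $y$, while on $A\setminus\mathcal{N}_v$ they equal $x$ and $x'$ respectively, which agree there; so $x|v|y$ and $x'|v|y$ agree on $A$ and $f\in\mathcal{C}(A)$ gives equality. Hence $G_vf(x)=G_vf(x')$, i.e. $G_vf\in\mathcal{C}(A\cup\mathcal{N}_v)$.

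For \eqref{eq:genbd}, \eqref{eq:genclosed} and the triangle inequality give $|G_vf(x)|\le\int_{\mathbb{Y}^{\mathcal{N}_v}}|f(x|v|y)-f(x)|\,\alpha_v^*(x|_{\mathcal{N}_v},dy)\le 2\|f\|\,\alpha_v^*(x|_{\mathcal{N}_v})\le 2c_v\|f\|$ by \eqref{eq:jrknbd}. If $v\in\mathcal{N}_A$ then $c_v\le\max_{w\in\mathcal{N}_A}c_w=c_{\mathcal{N}_A}$, which is finite since $A<V$ and $\mathcal{V}$ is locally finite so $\mathcal{N}_A<V$; if $v\notin\mathcal{N}_A$ then $G_vf\equiv0$ by \eqref{eq:gen0}, so the bound is trivial. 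Either way $\|G_vf\|\le 2c_{\mathcal{N}_A}\|f\|$. I do not expect any real obstacle here: the only deviation from the standard argument is carrying the site-dependent constants $c_v$ and bounding them by $c_{\mathcal{N}_A}$ on the relevant finite set; the one place warranting care is the set-theoretic splitting of $A$ in \eqref{eq:gendom} and the equivalence $\mathcal{N}_v\cap A=\emptyset\iff v\notin\mathcal{N}_A$ used in \eqref{eq:gen0}.
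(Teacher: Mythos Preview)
Your proposal is correct and takes essentially the same approach as the paper: the paper simply defers \eqref{eq:gen0} and \eqref{eq:gendom} to \cite[Lemma 6.1]{Penrose2008existence} and then derives \eqref{eq:genbd} from \eqref{eq:genclosed} via $\alpha_v^*(x|_{\mathcal{N}_v})\le c_{\mathcal{N}_A}$ for $v\in\mathcal{N}_A$ together with \eqref{eq:gen0} for $v\notin\mathcal{N}_A$, which is exactly your argument spelled out in full.
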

\begin{proof}
The proofs of \eqref{eq:gen0} and \eqref{eq:gendom} are as in \cite[Lemma $6.1$]{Penrose2008existence}. Further, since $\alpha_v^*(x|_{\mathcal{N}_v})\leq c_{\mathcal{N}_A}$ for every $v\in\mathcal{N}_A$, and by \eqref{eq:gen0}, $G_vf\equiv0$ for every $v\in V\setminus\mathcal{N}_A$, \eqref{eq:genclosed} gives us \eqref{eq:genbd}.
\end{proof}

Now, for every $W\subset V$, we define the operator $G_W$ on $\mathcal{C}$ by,
\begin{equation} \label{eq:gendef}
G_W:=\sum_{v\in W}G_v.
\end{equation}

We note that $G_W$ is well-defined as a linear operator for every $W\subset V$. In fact, for any $f\in\mathcal{C}$, $f\in\mathcal{C}(A)$ for some $A<V$. Then for every $v\not\in\mathcal{N}_A$, $G_vf\equiv0$ by \eqref{eq:gen0}, and thus $G_Wf=\sum_{v\in\mathcal{N}_A\cap W}G_vf$. We will now look at cases where $W<V$ or $W=V$. We write $G:=G_V$. A natural question that arises at this juncture is whether, for any $W<V$ or for $W=V$, $G_W$ is the generator of a family of Markov processes on $\mathbb{Y}^V$, i.e., whether there exists a Markovian family of transition kernels, such that the associated semigroup $(P_t)_{t\geq0}$ has generator $G$ on $\mathcal{C}$. If $W<V$, $G_W$ is the generator of a family of jump processes with jump kernel $\sum_{v\in W}\alpha_v$, which is frozen outside $\mathcal{N}_W$. We write $(P_t^W)_{t\geq0}$ for the associated semigroup. Proving the existence of interacting particle systems that evolve on a finite graph is thus straightforward. However, whether $G$ is the generator of a family of Markov processes on $\mathbb{Y}^V$ when $V$ is infinite is a non-trivial question to ask. Theorem \ref{thm:exist_cvg_ips} answers this question in the affirmative by guaranteeing the existence of a family of Markov processes corresponding to $G$, even when the graph is infinite. \\

\paragraph*{\textbf{Self-Updating Systems}}\index{self-updating systems}
In case the jump rate kernel at each $v\in V$ satisfies the following extra condition,
\begin{equation}\label{con:self_upd}
\alpha_v^*(x,{\mathcal{A}})=0\text{, if }\forall y\in {\mathcal{A}},\exists w\in V,\text{ such that }w\neq v\text{ and }x(w)\neq y(w),
\end{equation}
we say that the jump rate kernels are \emph{self-updating}\index{jump rate kernel!self-updating}. Condition \eqref{con:self_upd} ensures that any site $v\in V$ updates only itself and not its neighbours, possibly depending on the states of its neighbours. This is a case of interest, as many examples satisfy \eqref{con:self_upd}.

\subsection{Main Result for Deterministic Graphs}\label{subs:excon}
In this subsection we state and discuss one of the main results of this paper, Theorem \ref{thm:exist_cvg_ips}. As already mentioned in Section \ref{sec:intro}, the main objective of this paper is to relax the assumptions of uniformly bounded degrees and jump rates. We do this in Theorem \ref{thm:exist_cvg_ips}, by replacing these two assumptions with the assumption that certain quantities called double jump trail rates are pointwise finite. To give some intuition, the double jump trail rate at any vertex can be viewed as a weighted version of the local connective constant at that vertex in the graph structure of the square graph $\mathcal{V}^2$ on $\mathcal{V}$, involving not all but a special class (see \eqref{eq:saw2remsaw}) of SAWs emanating from that vertex, the weight associated with a SAW being the product of the $c_v$'s (see \eqref{eq:jrknbd}) corresponding to all but the end vertices of that SAW. The finitude of double jump trail rates ensures that vertices with high degrees and / or jump rates are not too close to each other, and thus the process does not ``{blow up}''. While the finitude of the double jump trail rates ensures the existence of any particle system (including self-updating ones), as we will see in Theorem \ref{thm:exist_cvg_ips}, the finitude of a relatively simpler version of the double jump trail rates that involves $\mathcal{V}$ and not $\mathcal{V}^2$, can guarantee the existence of self-updating systems. Therefore, Theorem \ref{thm:exist_cvg_ips} and its counterpart for random graphs, Theorem \ref{thm:exist_cvg_ips_r}, are versionized both for general systems and self-updating systems. The proofs for the self-updating systems are, however, along the lines of those for the general systems, with some minor changes. Hence we provide detailed proofs of the results for the general systems only, and mention only the necessary logical alterations in their self-updating counterparts. The statement of Theorem \ref{thm:randips}, on the other hand, is the same for both the cases, as it gives conditions only on $\mathcal{V}$ and not $\mathcal{V}^2$, irrespective of whether the system is self-updating or not. 

We now introduce the notions of jump trail rates which will be central to our results. Given $n\in\mathbb{N}$ and any $0\leq k\leq n$, and given an $n$-length SAW $\gamma=v_0,v_1,\cdots,v_n$ in $\mathcal{V}^2$ and an $(n+k)$-length path $\gamma'$ in $\mathcal{V}$, we call $\gamma$ to be a \textit{remnant} of $\gamma'$ if there exist $k$ integers $0\leq i_1<i_2<\cdots<i_k<n$ and $k$ corresponding vertices $u_1,u_2,\cdots,u_k\in V$, such that $v_{i_j}\sim u_j\sim v_{i_j+1}$ for every $1\leq j\leq k$, and the path $\gamma'$ is obtained by plugging these $k$ vertices in between vertices of $\gamma$ as described above.\index{remnant} At this juncture, we note that any SAW $\gamma$ in $\mathcal{V}^2$ is trivially a remnant of a path $\gamma'$ in $\mathcal{V}$. However, $\gamma'$ may not be a SAW in $\mathcal{V}$ itself. This naturally prompts us to define, for any $v\in V$,
\begin{equation}\label{eq:saw2remsaw}
\text{SAW}_{\mathcal{V},n}^*(v):=\Big \{ \gamma\in\text{SAW}_{\mathcal{V}^2,n}(v) \, \Big| \, \exists \gamma'\in\text{SAW}_{\mathcal{V}}(v)\text{ such that }\gamma\text{ is a remnant of }\gamma'\Big\}.
\end{equation}
We further define $\text{SAW}_\mathcal{V}^*(v):=\cup_{n\in\mathbb{N}}\text{SAW}_{\mathcal{V},n}^*(v)$.\label{nom:sawnv*}
\begin{definition}
\label{def:djrt}
\emph{(Jump Trail Rates)} Let $\mathcal{V}=(V,E)$ be a locally finite graph, $\mathbb{Y}$ be a Polish space, and $(\alpha_v^*)_{v\in V}$ be a family of jump rate kernels as in Subsection \ref{subs:gen}, with $c_v$'s as in \eqref{eq:jrknbd}. For every $v\in V$ and $n>1$, we define the \emph{$n$-step double jump trail rate}\index{double jump trail rate!$n$-step} and the \emph{$n$-step simple jump trail rate}\index{simple jump trail rate!$n$-step} from $v$, respectively, as,
\begin{equation}\label{eq:ndjrt}
\Theta_{v,(n)}^*:=\Bigg(\sum\limits_{\substack{\gamma:=v,v_1,\cdots,v_n \\ {\gamma\in\text{SAW}_{\mathcal{V},n}^*(v)}}}\prod\limits_{i=1}^{n-1}c_{v_i}\Bigg)^\frac{1}{n-1}\qquad\text{and}\qquad\Theta_{v,(n)}:=\Bigg(\sum\limits_{\substack{\gamma:=v,v_1,\cdots,v_n \\ {\gamma\in\text{SAW}_{\mathcal{V},n}(v)}}}\prod\limits_{i=1}^{n-1}c_{v_i}\Bigg)^\frac{1}{n-1},
\end{equation}
and the \emph{double jump trail rate}\index{double jump trail rate} and \emph{simple jump trail rate}\index{simple jump trail rate} from $v$, respectively, as,
\begin{equation}\label{eq:djrt}
\Theta_{v}^*:=\limsup\limits_{n\to\infty}\Theta_{v,(n)}^*\qquad\text{and}\qquad\Theta_{v}:=\limsup\limits_{n\to\infty}\Theta_{v,(n)}.
\end{equation} 
\end{definition}

We state our main result on existence and convergence of interacting particle systems on deterministic graphs. We recall, for any $W<V$, $(P_t^W)_{t\geq0}$, the transition semigroup with generator $G_W$.
\begin{theorem}\label{thm:exist_cvg_ips} \emph{(Existence and Convergence for a Deterministic Graph)}
Let $\mathcal{V}=(V,E)$ be a locally finite graph and $\mathbb{Y}$ be a Polish space. Let $(\alpha_v^*)_{v\in V}$ be a family of jump rate kernels such that one of the following assumptions holds:
\begin{enumerate}
    \item[(i)]  For every $v\in V$, $\Theta_{v}^*<\infty$.
    \item[(ii)] The jump rate kernels are self-updating as in \eqref{con:self_upd}, and for every $v\in V$, $\Theta_{v}<\infty$.
\end{enumerate}
{Let} $G:=G_V$ be the operator defined at \eqref{eq:gendef}. 
Then there exists a Markovian family of transition distributions $(\mu_t)_{t\geq0}$ on $\mathbb{Y}^V$, such that the associated semigroup $(P_t)_{t\geq0}$ has generator $G$ on $\mathcal{C}$, i.e.,
\begin{equation}
Gf=\lim\limits_{t\to0}\frac{P_tf-f}{t},
\end{equation}
for every $f\in\mathcal{C}$. Moreover, for any sequence $(W_m)_{m\geq1}$ of finite subsets of $V$, and any $t\geq0$, {if $\liminf\limits_{m\to\infty}(W_m)=V$, then,
\begin{equation}
P_tf(x)=\lim\limits_{m\to\infty}P^{W_m}_tf(x),\;\forall f\in\mathcal{C},x\in \mathbb{Y}^V.\label{eq:convsg}
\end{equation}}
There also exists a filtration $(\mathcal{G}_t)_{t\geq0}$ and a Markov family of processes $(\xi^x)_{x\in \mathbb{Y}^V}$ adapted to $(\mathcal{G}_t)_{t\geq0}$, with transition semigroup $(P_t)_{t\geq0}$.
\end{theorem}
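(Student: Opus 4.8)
The plan is to realize the process via the inhomogeneous Poisson graphical construction adapted from \cite{Penrose2008existence}, and to reduce existence and convergence to a single probabilistic statement: that the graphical construction does not percolate in any finite time. First I would set up, for each $v \in V$, an independent Poisson process on $[0,\infty)$ of rate $c_v$ (the sitewise bound from \eqref{eq:jrknbd}), together with i.i.d.\ marks used to decide, via a thinning/acceptance rule built from $\alpha_v^*$, whether a potential jump at $v$ is realized and what the new state on $\mathcal{N}_v$ becomes. This is the content of Proposition \ref{prop:exstcggc} and Proposition \ref{prop:ggcexstc} referenced in the introduction; I would invoke them to get that, \emph{on the event that the graphical construction does not percolate before time $t$ from any finite set}, the map taking an initial configuration $x$ to the configuration at time $t$ is well-defined and measurable, yields a consistent (Markovian) family of transition kernels $\mu_t$, and that the finite-window dynamics $P_t^{W_m}$ agree with it on the (increasing to full-measure) events where the relevant clusters are contained in $W_m$. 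The generator identity $Gf = \lim_{t\to 0}(P_tf - f)/t$ for $f \in \mathcal{C}(A)$ then follows from a first-jump decomposition: up to time $t$ the only Poisson points that can affect coordinates in $A$ lie in $\mathcal{N}_A$, there are $O(t)$ of them with probability $1 - O(t^2)$, and \eqref{eq:genclosed} identifies the linear-in-$t$ term as $\sum_{v \in \mathcal{N}_A} G_v f(x) = Gf(x)$, uniformly in $x$ by \eqref{eq:genbd}; the convergence \eqref{eq:convsg} is then immediate because on the non-percolation event the cluster of $A$ is eventually inside $W_m$ when $\liminf W_m = V$.

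The heart of the argument, and the step I expect to be the main obstacle, is proving \textbf{non-percolation of the graphical construction in finite time} from the hypothesis $\Theta_{\mathcal{V},v}^* < \infty$ for all $v$ (resp.\ $\Theta_{\mathcal{V},v} < \infty$ in the self-updating case). The strategy is a first-moment (union) bound over potential infection paths. Say $v$ "affects" $w$ before time $t$ if there is a chain of Poisson points $v = u_0, u_1, \ldots, u_m = w$ with times $0 < t_1 < \cdots < t_m < t$, each $t_i$ a Poisson point at $u_{i-1}$, and consecutive $u_{i-1}, u_i$ being $2$-neighbours in $\mathcal{V}$ (so that an update at $u_{i-1}$ can touch the state feeding the update at $u_i$); percolation from $A$ in time $t$ means $A$ affects infinitely many vertices. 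The combinatorial skeleton of such a chain, after discarding repetitions, is a self-avoiding walk in $\mathcal{V}^+$, and — crucially — the way $2$-neighbour steps arise from genuine edges of $\mathcal{V}$ (a step $v_{i}\to v_{i+1}$ in $\mathcal{V}^+$ that is not an edge of $\mathcal{V}$ is "witnessed" by a common neighbour $u$) is exactly the remnant structure encoded in \eqref{eq:saw2remsaw}; hence the relevant walks lie in $\text{SAW}_{\mathcal{V},n}^*(v)$. For a fixed such walk $\gamma = v, v_1, \ldots, v_n$, the probability that all the required Poisson points occur, in order, before time $t$ is at most $\dfrac{t^{n}}{n!}\prod_{i=1}^{n-1} c_{v_i}$ (an ordered-arrivals estimate, the $c_{v_i}$ being the rates at the intermediate sites, with the endpoints not contributing a rate factor), so
\begin{equation}\label{eq:union}
\mathbb{P}\big(\text{cluster of }v\text{ reaches distance }n\text{ by time }t\big)\ \leq\ \frac{t^{n}}{n!}\sum_{\substack{\gamma = v, v_1, \ldots, v_n\\ \gamma \in \text{SAW}_{\mathcal{V},n}^*(v)}}\ \prod_{i=1}^{n-1} c_{v_i}\ =\ \frac{t^{n}}{n!}\,\big(\Theta_{\mathcal{V},v,(n)}^*\big)^{\,n-1}.
\end{equation}
Since $\Theta_{\mathcal{V},v}^* = \limsup_n \Theta_{\mathcal{V},v,(n)}^* < \infty$, the right side of \eqref{eq:union} is, for all large $n$, at most $t^n K^{n-1}/n!$ for a finite constant $K = K(v,t)$, which is summable in $n$; by Borel--Cantelli the cluster of $v$ by time $t$ is a.s.\ finite, so the construction does not percolate from any $v$ (hence from any finite $A$, by a finite union) at any fixed $t$, and then for all $t$ simultaneously by taking $t$ through a countable sequence. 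In the self-updating case the witnessing-vertex detour is unnecessary: an update at $u_{i-1}$ changes only $u_{i-1}$ itself, so an affecting chain moves along genuine edges of $\mathcal{V}$ and the skeleton is an honest SAW in $\mathcal{V}$, giving the same bound with $\text{SAW}_{\mathcal{V},n}(v)$ and $\Theta_{\mathcal{V},v,(n)}$ in place of the starred quantities.

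Finally, with non-percolation in hand, I would assemble the conclusion: define $\mu_t(x,\cdot)$ as the law of the time-$t$ configuration of the graphical construction started from $x$; the Chapman--Kolmogorov identity and $\mu_0 = \delta_x$ come from the Markov property of the Poisson points and the explicit flow property of the construction (Proposition \ref{prop:ggcexstc}); c\`adl\`ag sample paths and adaptedness to the filtration $(\mathcal{G}_t)_{t\ge 0}$ generated by the Poisson/mark data give the Markov family of processes $(\xi^x)_{x\in\mathbb{Y}^V}$; and the generator and convergence statements follow as described above. The only genuinely delicate points beyond \eqref{eq:union} are measurability of the infinite-volume flow (handled by exhibiting it as an a.s.\ locally finite limit of the finite-window flows, which are manifestly measurable) and checking the ordered-arrivals probability bound carefully when a site is revisited by the chain — this is absorbed by passing to the self-avoiding skeleton and only inflates the constant, not the summability.
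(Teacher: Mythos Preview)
Your overall strategy matches the paper's: reduce to non-percolation of the graphical construction (Proposition \ref{prop:exstcggc}) and then build the process on finite clusters (Proposition \ref{prop:ggcexstc}). The ordered-arrivals bound $\tfrac{t^n}{n!}\prod c_{v_i}$ you use for a fixed chain is a legitimate and in fact cleaner alternative to the paper's Chernoff/exponential-moment bound in Lemma \ref{lem:tail}; both give summability once the jump rate trail is finite. One small correction: in the paper's orientation of the graphical construction the Poisson arrival at step $i$ is at the \emph{arrived-at} site $v_i$, so the endpoint $v_n$ \emph{does} contribute its rate $c_{v_n}$ and only the starting vertex $v$ does not. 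This is harmless, since the resulting sum is the variant with $\prod_{i=1}^n c_{v_i}$, which the paper shows is dominated by $(\Theta^*_{\mathcal{V},v}+\varepsilon)^n$ for large $n$, and summability follows.

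There is, however, a genuine gap in your reduction of an arbitrary affecting chain to a walk in $\text{SAW}^*_{\mathcal{V}}(v)$. Loop-erasing a path in $\mathcal{V}^+$ yields a SAW in $\mathcal{V}^+$, but not every SAW in $\mathcal{V}^+$ is a remnant of a SAW in $\mathcal{V}$: when you insert witness vertices for the $2$-step edges, the resulting walk in $\mathcal{V}$ can revisit a witness (or a witness can coincide with one of the $v_{k_i}$), destroying self-avoidance in $\mathcal{V}$. Your sentence ``hence the relevant walks lie in $\text{SAW}_{\mathcal{V},n}^*(v)$'' is therefore not justified, and without it your union bound would be over all of $\text{SAW}_{\mathcal{V}^+}(v)$, which is a strictly stronger hypothesis than $\Theta^*_{\mathcal{V},v}<\infty$. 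The paper fills this with a separate combinatorial lemma (Lemma \ref{lem:path2saw}), proved by induction on the path length: from any path in $\mathcal{V}^+$ from $v$ to $v_n$ one can extract a \emph{sub}sequence of its vertices that lies in $\text{SAW}^*_{\mathcal{V}}(v)$, carefully re-routing when a new witness already appears in the previously built SAW in $\mathcal{V}$. This lemma is precisely what allows the restriction to the smaller class $\text{SAW}^*$ (and hence the weaker assumption on $\Theta^*$ rather than a full $\mathcal{V}^+$ connective-constant condition) to work, and it is the missing ingredient in your sketch.
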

Intuitively, Theorem \ref{thm:exist_cvg_ips} asserts the following in broad strokes -- the geometry of a graph and the assignment of local jump rate kernels at its vertices are enough to guarantee the existence of a family of interacting particle systems on the graph, the generator of which can be obtained by ``compositing" the jump rates appropriately. It is to note that we do not separately mention that the jump rates have to satisfy \eqref{eq:jrknbd} as this is implied by either of the assumptions $(i)$ and $(ii)$ of Theorem \ref{thm:exist_cvg_ips}. We do not give a separate explicit proof of Theorem \ref{thm:exist_cvg_ips} as it follows directly from Propositions \ref{prop:exstcggc} and \ref{prop:ggcexstc} discussed in Section \ref{sec:graph}.

\begin{remark}\label{rem:exist}
Theorem \ref{thm:exist_cvg_ips} complements \cite[Theorem $2.1$]{Penrose2008existence}, as discussed below.
\begin{enumerate}
    \item \textit{(Existence and Convergence)} Theorem \ref{thm:exist_cvg_ips} relaxes the assumptions of uniform degree bound and uniformly bounded jump rates of \cite[Theorem $2.1$]{Penrose2008existence} -- one can verify that these assumptions imply the assumptions of Theorem \ref{thm:exist_cvg_ips}. However, as we will see in the examples discussed in Section \ref{sec:ipsex}, Theorem \ref{thm:exist_cvg_ips} applies to many models which lack a uniform bound on degrees or jump rates. A class of examples can be constructed as follows. We take any graph $\mathcal{V}=(V,E)$ with all local connective constants (defined in Subsection \ref{subs:graph}) finite but no uniform degree bound. An example would be the graph whose vertex set is $T:=\big\{(i,j)\in\mathbb{Z}^2\big|j\leq i,i\in\mathbb{Z}^+\big\}$, and the edges are $(i,0)\sim(i,j)$ and $(i,0)\sim(i\pm1,0)$ for every $i\in\mathbb{N}$ and every $j\leq i$. Theorem \ref{thm:exist_cvg_ips} implies that such a graph will allow self-updating type interacting particle systems with uniformly bounded jump rates. Hence, Theorem \ref{thm:exist_cvg_ips} is an extension of the existence and convergence implication of \cite[Theorem $2.1$]{Penrose2008existence}. Examples of greater interest can be found in random graphs, and will be discussed in detail in Section \ref{sec:rgex}.
    \item \textit{(Uniqueness)} Theorem \ref{thm:exist_cvg_ips} does not claim uniqueness of the system unlike \cite[Theorem $2.1$]{Penrose2008existence}. In \cite[Theorem $2.1$]{Penrose2008existence}, uniqueness is shown under the assumptions of uniformly bounded degrees and jump rates.   
\end{enumerate}
\end{remark}

\section{Interacting Particle Systems on Random Graphs}\label{sec:exconr}

This section discusses the application of Theorem \ref{thm:exist_cvg_ips} to many random graphs. % The randomness in our framework will come from two elements -- random edge sets and random jump rates. While the random edges automatically enforce a degree of randomness into the jump rate kernels, our framework will cover models where the randomness of the jump rate kernels does not arise solely from that of the edge set, but also from some ``environmental randomness".} 
Subsection \ref{subs:randfr} is devoted to developing the framework necessary for our agenda and also discussing our main results for the random graphs -- Theorem \ref{thm:exist_cvg_ips_r} and Theorem \ref{thm:randips}. Theorem \ref{thm:exist_cvg_ips_r} is a restatement of Theorem \ref{thm:exist_cvg_ips} for random graphs, and hence we do not give a separate proof of Theorem \ref{thm:exist_cvg_ips_r}. Theorem \ref{thm:randips} gives two readily verifiable sufficient conditions under which the assumptions of Theorem \ref{thm:exist_cvg_ips_r} hold. The first condition concerns the geometry of the random graph -- for a fixed $p>1$ and for each $v\in V$, the $\ell^{1/p}$-sum of the probabilities of self-avoiding walks of length $n$ starting from $v$ grows at most exponentially in $n$. The second condition concerns the assignment of jump rates -- the $p'$-th product moment of the jump rates at any $n$ distinct vertices grows at most exponentially in $n$, where $p'$ is the H\"older conjugate of $p$. We prove Theorem \ref{thm:randips} in Subsection \ref{subs:proofrandips}. Explicit examples of random graphs and interacting particle systems on them satisfying the conditions of Theorem \ref{thm:randips} will be given in Sections \ref{sec:rgex} and \ref{sec:ipsex} respectively.

\subsection{Framework and Main Results for Random Graphs}\label{subs:randfr}
We will now develop the necessary formalism to extend the notions of graph, jump rate kernels, generators, etc. to our desired random setting. We recall the probability space $(\Omega,\Sigma,\mathbb{P})$ from Subsection \ref{subs:gen}. 

\smallskip
\paragraph*{\textbf{Random Graph}}
As has been our custom, throughout the rest of this paper, by ``random graph" we will exclusively refer to undirected, simple random graphs with no loops and with a deterministic vertex set. Let $V$ be a countably infinite set. We can identify the set of all such  undirected, simple graphs without loops with vertex set $V$ with the countable product $\{0,1\}^{V\choose2}$\label{nom:vchoose2}, where ${V\choose2}:=\big\{\{u,v\}\big|\;u,v\in V,\;u\neq v\big\}$. We take the product $\sigma$-algebra $\mathcal{F}$ on $\{0,1\}^{V\choose2}$. Then, by a \emph{random graph with vertex set $V$}, we mean a $\Sigma,\mathcal{F}$-measurable map $\mathcal{V}:\Omega\to\{0,1\}^{V\choose2}$.

\smallskip
\paragraph*{\textbf{Jump Rate Kernels on Random Graph}}
%Let $\mathcal V(\omega)$ be a realization of the random graph with vertex set $V$. 
We will now extend the notion of jump rate kernels for random graphs. These will be analogues of the global, and not the local jump rate kernels, a convention purely in order to avoid measurability issues that will be discussed shortly. For every realization $\omega\in\Omega$, for each $v \in V$, we define a transition kernel $\alpha_v(\cdot,\cdot;\omega)$ on $\mathbb{Y}^V$ with the following properties.
\begin{enumerate}
    \item[(i)] For every $x\in\mathbb{Y}^V$ and $\mathcal{A}\in\mathcal{B}(\mathbb{Y}^V)$, $\alpha_v(x,\mathcal{A};\omega)=0$ whenever for each $y\in\mathcal{A}$, there exists $w\not\in\mathcal{N}_{\mathcal V(\omega),v}$, such that $y(w)\neq x(w)$.
    \item[(ii)] For every cylinder function $f : \mathbb{Y}^V \to \mathbb{R}$ and every $x \in \mathbb{Y}^V$, the map, 
    \begin{equation}\label{eq:randjrk}
    \omega \longmapsto\int_{\mathbb{Y}^V}f(y)\,\alpha_v(x,dy;\omega) \, \textrm{is $\Sigma$-measurable}.
    \end{equation}
    \item[(iii)] For every $\omega\in\Omega$, the \emph{total jump rate} map, $x\longmapsto\alpha_v(x;\omega):=\alpha_v(x,\mathbb{Y}^V;\omega)$ is lower semi-continuous (LSC) in the product topology, i.e., $\liminf_{x\to x_0}\alpha_v(x;\omega)\geq\alpha_v(x_o;\omega)$.
\end{enumerate}
Thus, for each $\omega\in\Omega$, $(\alpha_v(\cdot,\cdot;\omega))_{v\in V}$ is a family of global jump rate kernels on the realized graph $\mathcal{V}(\omega)$ as in \eqref{eq:jrkglob}. We then call the correspondence $\omega\mapsto(\alpha_v(\cdot,\cdot;\omega))_{v\in V}$ a family of jump rate kernels on the random graph $\mathcal{V}$, and denote it as $(\boldsymbol{\alpha}_v)_{v\in V}$.
%At this juncture, we note that the random jump rate kernels have been defined such that their realizations are transition kernels on the global state space $\mathbb{Y}^V$, and not the space of configurations in the neighbourhoods, which is unlike their deterministic counterparts. However, property $(i)$ above makes it clear that any realization of the random jump rate kernel at any $v\in V$ prescribes jumps taking place only inside the (realized) neighbourhood of $v$, and therefore, in effect, can be identified as a transition kernel on $\mathbb{Y}^{\mathcal{N}_{\mathcal{V}(\omega),v}}$. The definition above is, a mere convention to skirt measurability issues that will be discussed in a moment. 
Once we have defined the random jump rate kernels and have their corresponding total jump rates, the most natural task that follows is to define random versions of the $c_v$'s as in \eqref{eq:jrknbd}. We use \eqref{eq:jrknbdglob} to define $c_v(\omega):= \sup_{x \in \mathbb{Y}^{V}}\alpha_v(x;\omega)$ for each $\omega\in\Omega$ and denote the correspondence $\omega\mapsto c_v(\omega)$ as $\boldsymbol{c}_v$ for each $v\in V$. However, since the supremum is being taken on an uncountable collection of random variables  (measurability of $\alpha_v(x;\cdot)$ follows from \eqref{eq:randjrk} with $f\equiv 1$), we need to justify the measurability of $\boldsymbol{c}_v$, which we do in the following result.

\begin{lemma}
For each $v\in V$, $\boldsymbol{c}_v:=\sup_{x \in \mathbb{Y}^{V}}\boldsymbol{\alpha}_v(x)$ is a well-defined $[0,\infty]$-valued random variable.
\end{lemma}
\begin{proof}
We observe that $\mathbb{Y}^V$ being Polish, admits a countable dense subset $D$. Since for each $v\in V$ and each $\omega\in\Omega$, $\alpha_v(\cdot;\omega)$ is LSC,  therefore, using standard topological arguments, $c_v(\omega)=\sup_{x \in D}\alpha_v(x;\omega)$, and thus $\boldsymbol{c}_v$ can be written as a supremum over a countable collection of measurable maps, which guarantees that $\boldsymbol{c}_v$ is a well-defined $[0,\infty]$-valued random variable
\end{proof}
We note at this point that had we defined the random versions of our jump rate kernels to be local ones and not global ones, for each $\omega$ the above argument would have prompted us to obtain a countable dense subset $D(\omega)$ of $\mathcal{N}_{\mathcal{V}(\omega),v}$, and taking a supremum over a set $D(\omega)$ that depends on $\omega$, in general destroys measurability. This is the rationale for defining the random versions of the jump rate kernels as analogues of the global jump rate kernels given in \eqref{eq:jrkglob}, and not the local ones. Once we have defined the $\boldsymbol{c}_v$'s, it is routine to define the random versions of $\Theta_v$ and $\Theta^*_v$, the jump trail rates (to recall \eqref{eq:djrt}). We maintain the convention of using boldface for denoting these random versions, i.e., $\boldsymbol{\Theta}_v$ and $\boldsymbol{\Theta}_v^*$ respectively.

\smallskip
\paragraph*{\textbf{Generator}}
For each realization $\omega \in \Omega$, we define linear operators $G_v(\omega)$ on the algebra $\mathcal C$ of cylinder functions on $\mathbb{Y}^V$, exactly as in Subsection \ref{subs:gen}, using the jump rate kernels $\alpha_v(\omega)$ and the neighbourhood structure induced by $\mathcal V(\omega)$. Explicitly, for $g \in \mathcal C$ and $x \in \mathbb{Y}^V$,
\begin{equation*}
G_v(\omega)g(x):= \int_{\mathbb{Y}^V}\big(g(y) - g(x)\big)\,\alpha_v(x,dy;\omega).
\end{equation*}
Clearly taking $f(\cdot)=g(\cdot)-g(x)$ in \eqref{eq:randjrk} guarantees that $\omega\mapsto G_v(w)g(x)$ is $\Sigma$-measurable for each $x\in\mathbb{Y}^V$, each $v\in V$ and each $g\in\mathcal{C}$. We then define,
\begin{equation}\label{eq:gendef_r}
G(\omega) := \sum_{v \in V} G_v(\omega)   
\end{equation}
which is well-defined on $\mathcal C$, since for each $f \in \mathcal C$ only finitely many terms in the sum are non-zero by \eqref{eq:gen0}. For each fixed $f \in \mathcal C$ and $x \in \mathbb{Y}^V$, the map $\omega \longmapsto G(\omega)f(x)$ is thus $\Sigma$-measurable.
\smallskip

Now that we have introduced the necessary formalism, we will state the following theorem which is an analogue of Theorem \ref{thm:exist_cvg_ips} in random settings.

\begin{theorem}\label{thm:exist_cvg_ips_r} \emph{(General Existence Result for a Random Graph)}
Let $V$ be a countably infinite set and $\mathbb{Y}$ be a Polish space. Let $\mathcal{V}$ be a random graph which is almost surely locally finite and has vertex set $V$ and $(\boldsymbol{\alpha}_v)_{v\in V}$ be a family of jump rate kernels on it such that one of the following assumptions holds:
\begin{itemize}
    \item[(i)] For every $v\in V$, $\mathbb{P}\big(\boldsymbol{\Theta}_{v}^*<\infty\big)=1$.
    \item[(ii)] The jump rate kernels are self-updating as in \eqref{con:self_upd} almost surely, and for every $v\in V$, $\mathbb{P}\big(\boldsymbol{\Theta}_{v}<\infty\big)=1$.
\end{itemize}
For any $\omega\in\Omega$, let $G(\omega)$ be as defined in \eqref{eq:gendef_r}. Then for almost every $\omega$, there exists a Markovian family of transition distributions $(\mu_t^{(\omega)})_{t\ge0}$ on $\mathbb{Y}^V$, with associated transition semigroup $(P_t^{(\omega)})_{t\ge0}$, such that $G(\omega)$ is the generator of $(P_t^{(\omega)})_{t\ge0}$ on $\mathcal C$, i.e.,
\begin{equation}
G(\omega)f=\lim\limits_{t\to0}\frac{P_t^{(\omega)}f-f}{t},
\end{equation}
for every $f\in\mathcal{C}$. Moreover, for any sequence $(W_m)_{m\geq1}$ of finite subsets of $V$, and any $t\geq0$, {if $\liminf\limits_{m\to\infty}(W_m)=V$, then,
\begin{equation}
P_t^{(\omega)}f(x)=\lim\limits_{m\to\infty}P^{W_m,(\omega)}_tf(x),\;\forall f\in\mathcal{C},x\in \mathbb{Y}^V.
\end{equation}}
For every realization where the above statement holds, there also exists a filtration $(\mathcal{G}_t^{(\omega)})_{t\geq0}$ and a Markov family of processes $(\xi^{x,(\omega)})_{x\in \mathbb{Y}^V}$ adapted to $(\mathcal{G}_t^{(\omega)})_{t\geq0}$, with transition semigroup $(P_t^{(\omega)})_{t\geq0}$.
\end{theorem}

\begin{comment}
\begin{remark}
In the formalism preceding Theorem \ref{thm:exist_cvg_ips_r}, we equipped (i) the space $\mathcal{L}(\mathcal{C})$ of linear operators on $\mathcal{C}$, and (ii) the space $K$ of probability kernels on $\bBY^{V}$, with their canonical $\sigma$–algebras. The maps
\begin{equation*}
\omega \mapsto G(\omega), \qquad \omega \mapsto P_t(\omega)
\end{equation*}
are measurable by construction, because both are obtained from the measurable family $\{\alpha_v^*(V(\omega)) : v\in V\}$. In particular, $(P_t)_{t\geq 0}$ and $G$ are random elements in the measurable spaces $\mathcal{K}$ and $\mathcal{L}(\mathcal{C})$ respectively.
\end{remark}
\end{comment}

We do not give a separate proof of Theorem \ref{thm:exist_cvg_ips_r} as it is a restatement of Theorem \ref{thm:exist_cvg_ips} under random settings. The assumptions of Theorem \ref{thm:exist_cvg_ips} hold almost surely in the premise of Theorem \ref{thm:exist_cvg_ips_r}, and for each realization where these assumptions are satisfied the conclusions of Theorem \ref{thm:exist_cvg_ips} hold. Checking the assumptions of Theorem \ref{thm:exist_cvg_ips_r} for examples is, however, non-trivial. This is because the assumptions involve SAWs in the square graph $\mathcal{V}^2$ of $\mathcal{V}$. In Theorem \ref{thm:randips}, we will give some sufficient conditions on $\mathcal{V}$ and not $\mathcal{V}^2$, which are more tractable, directly applicable to examples, and under which the assumptions of Theorem \ref{thm:exist_cvg_ips_r} hold. In Subsection \ref{sec:rgex}, we will give explicit examples of random graphs to demonstrate the applicability of the sufficient conditions of Theorem \ref{thm:randips}.

\begin{theorem}\label{thm:randips}
\emph{(Some Sufficient Conditions for Existence)} Let $V$ be a countably infinite set. Let $\mathcal{V}$ be a random graph which is almost surely locally finite and has vertex set $V$, $\mathbb{Y}$ be a Polish space, $(\boldsymbol{\alpha}_v)_{v \in V}$ be a family of random jump rate kernels on $\mathcal{V}$, and $p>1$ be such that:

\begin{enumerate}
    \item[(i)] For any $v\in V$, there exist $a_v,b_v>0$, such that for any $n\in\mathbb{N}$,    \begin{equation}\label{eq:aeg}\sum\limits_{v_1,v_2,\cdots,v_n\in V\setminus\{v\}}^{\neq}\mathbb{P}\big(\gamma=v,v_1,v_2,\cdots,v_n\in\text{SAW}_{\mathcal{V},n}(v)\big)^{\frac{1}{p}}\leq a_vb_v^n.
    \end{equation}
    \item[(ii)] There exist $C, D > 0$ such that for any $n \in \mathbb{N}$ and any $n$ distinct vertices $v_1, \dots, v_n\in V$,
    \begin{equation}\label{eq:cvprod}
    \mathbb{E}\left[ \prod_{i=1}^n \boldsymbol{c}_{v_i}^{p'} \right] \le C D^n,\text{ where }p' := p/(p-1).
    \end{equation}
\end{enumerate}
Then all the conclusions of Theorem \ref{thm:exist_cvg_ips_r} hold.
\end{theorem}

\begin{comment}
\begin{remark}
We observe that if we interpret Theorem \ref{thm:randips} in a deterministic setting we infer that the conclusions of Theorem \ref{thm:exist_cvg_ips} hold for a deterministic locally finite graph $V$ with duly assigned jump rate kernels, so that $(c_v)_{v\in V}$ is the family of jump rates, if:
\begin{enumerate}
    \item For every $v\in V$, there exists $b_v>0$, such that the local connective constant (recall from Subsection \ref{subs:graph}) $\Delta_{\mathcal{V},v}\leq b_v$.
    \item There exists $ D > 0$ such that for every $v\in V$, $\boldsymbol{c}_v\leq D$, i.e., jump rates are uniformly bounded.
\end{enumerate}

Although this is a deterministic sufficient condition for general systems in terms of the original graph (contrast with Point $(1)$ of Remark \ref{rem:exist}, its proof crucially relies on the annealed remnant-counting argument of Theorem \ref{thm:randips}; such a conclusion cannot be possibly obtained directly from Theorem \ref{thm:exist_cvg_ips} in the deterministic setting.
\end{remark}
\end{comment}

We will now outline how we will apply Theorem \ref{thm:randips} in practical settings. We will first consider random graphs that are almost surely locally finite and satisfy Condition $(i)$ of Theorem \ref{thm:randips}, i.e., \eqref{eq:aeg}, for some $p>1$. Any random graph that has a uniform degree bound almost surely, i.e., satisfies $\mathbb{P}(\sup_{v\in V}\deg(v)<\infty)=1$, can be shown to satisfy \eqref{eq:aeg}. Further, for $p':=p/(p-1)$ and for some fixed $C,D>0$, we will require our graphs to satisfy the following product moment inequality,
\begin{equation}\label{eq:pm}
\mathbb{E}\Big[\prod_{i=1}^n\deg(v_i)^{p'}\Big]<CD^n,
\end{equation}
for every choice of $n\in\mathbb{N}$, and every corresponding choice of distinct vertices $v_1,v_2,\cdots,v_n$. In Section \ref{sec:rgex}, we give examples of two random graphs which are almost surely locally finite, satisfy \eqref{eq:aeg} and \eqref{eq:pm}, yet may not have a uniform degree bound almost surely. Next, in Section \ref{sec:ipsex}, we will consider models with jump rate kernels defined on almost surely locally finite random graphs satisfying \eqref{eq:aeg} and \eqref{eq:pm}, such that for each vertex $v$ of the graph, $\boldsymbol{c}_v\leq K\deg(v)$ for some $K>0$. This and \eqref{eq:pm} together will imply Condition $(ii)$ of Theorem \ref{thm:randips}, i.e., \eqref{eq:cvprod}. With both the conditions of Theorem \ref{thm:randips} satisfied, this will guarantee for each model the existence of a family of interacting particle systems on the given random graph conforming to the given jump rate kernels, for almost every realization of the model.

\subsection{Proof of Theorem \ref{thm:randips}}\label{subs:proofrandips}
The proof of Theorem \ref{thm:randips} involves showing that the double (resp., simple) jump trail rates (see \eqref{eq:djrt}) are almost surely finite, and thereby applying Theorem \ref{thm:exist_cvg_ips_r}. We do this by first showing that for any $n\in\mathbb{N}$, the expected $n$-step double (resp., simple) jump trail rates (random versions of the ones defined at \eqref{eq:ndjrt}) from any vertex grow at most exponentially in $n$, and then using Borel-Cantelli lemma.

\begin{proof}[Proof of Theorem \ref{thm:randips}.]
We first prove Theorem \ref{thm:randips} for the general systems, i.e., we do not assume the jump rate kernels to be self-updating almost surely. The central step in proving Theorem \ref{thm:randips} is to show that for any $n\geq2$ and any $v\in V$, there exist $P_v,Q_v>0$ such that,
\begin{equation}\label{eq:Xiexp}
\mathbb{E}\Big[\big(\boldsymbol{\Theta}_{v,(n)}^*\big)^{n-1}\Big]\leq P_vQ_v^{n-1},
\end{equation}
where $\boldsymbol{\Theta}_{v,(n)}^*$ is the (random version of the) $n$-step double jump trail rate as defined in \eqref{eq:ndjrt}.
 
Let us temporarily assume that \eqref{eq:Xiexp} holds and complete the proof. To prove Theorem \ref{thm:exist_cvg_ips_r} for general systems, it suffices to show that for each $v\in V$, $\boldsymbol{\Theta}_{v}^*$ as in \eqref{eq:djrt} is almost surely finite under the assumptions of Theorem \ref{thm:randips}. In fact, we will show that for each $v\in V$,
\begin{equation}\label{eq:Xifin}
\mathbb{P}(\boldsymbol{\Theta}_{v}^*\leq Q_v)=1.
\end{equation}
Towards this, we fix $v \in V$. Then, for any $Q'>Q_v$, using Markov's inequality and \eqref{eq:Xiexp},
\begin{equation*}
\mathbb{P}\big(\boldsymbol{\Theta}_{v,(n)}^*\geq Q'\big)=\mathbb{P}\Big(\big(\boldsymbol{\Theta}_{v,(n)}^*\big)^{n-1}\geq(Q')^{n-1}\Big)\leq\frac{\mathbb{E}\big[(\boldsymbol{\Theta}_{v,(n)}^*)^{n-1}\big]}{(Q')^{n-1}}\leq P_v\bigg(\frac{Q_v}{Q'}\bigg)^{n-1}
\end{equation*}
for all $n\geq2$. This implies, $\sum\limits_{n=2}^\infty\mathbb{P}\big((\boldsymbol{\Theta}_{v,(n)}^*)^{n-1}\geq(Q')^{n-1}\big)<\infty$. Then we use Borel-Cantelli lemma on the sequence of events $\big\{(\boldsymbol{\Theta}_{v,(n)}^*)^{n-1}\geq(Q')^{n-1}\big\}_{n\in\mathbb{N}}$ to obtain \eqref{eq:Xifin}, thereby completing the proof of  Theorem \ref{thm:randips} via Theorem \ref{thm:exist_cvg_ips_r}, provided \eqref{eq:Xiexp} holds.% We note at this point that if additionally we have $Q:=\sup_{v\in V}Q_v<\infty$, then instead of \eqref{eq:Xifin}, we have the stronger result that $\mathbb{P}(\sup_{v\in V}\boldsymbol{\Theta}_{v}^*\leq Q)=1$. 

We shall derive \eqref{eq:Xiexp} now. We fix $v\in V$ and $n>1$. We denote $p'=\frac{p}{p-1}$ for $p$ in \eqref{eq:aeg}. Using H\"older's inequality, we derive that,
\begin{align}
& \mathbb{E}\Big[\big(\boldsymbol{\Theta}_{v,(n)}^*\big)^{n-1}\Big] =\mathbb{E}\Bigg[\sum\limits_{\substack{\gamma:=v,v_1,\cdots,v_n \\ \gamma\in\text{SAW}_{\mathcal{V},n}^*(v)}}\prod\limits_{i=1}^{n-1}\boldsymbol{c}_{v_i}\Bigg]\nonumber
\\&=\mathbb{E}\Bigg[\sum\limits_{v_1,v_2,\cdots,v_n\in V\setminus\{v\}}^{\neq}\mathbbm{1}_{\gamma=v,v_1\cdots,v_n\in\text{SAW}_{\mathcal{V},n}^*(v)}\prod\limits_{i=1}^{n-1}{\boldsymbol{c}_{v_i}}\Bigg]\nonumber
\\&\leq\sum\limits_{v_1,v_2,\cdots,v_n\in V\setminus\{v\}}^{\neq}\Bigg[\mathbb{P}\Big(\gamma=v,v_1\cdots,v_n\in\text{SAW}_{\mathcal{V},n}^*(v)\Big) ^{\frac{1}{p}}\cdot\Bigg(\mathbb{E}\bigg[\prod\limits_{i=1}^{n-1}{\boldsymbol{c}_{v_i}^{p'}}\bigg]\Bigg)^{\frac{1}{p'}}\Bigg]\nonumber\\
&\leq{(CD^{n-1})^\frac{1}{p'}}\cdot\sum\limits_{v_1,v_2,\cdots,v_n\in V\setminus \{v\}}^{\neq}\mathbb{P}\Big(\gamma=v,v_1\cdots,v_n\in\text{SAW}_{\mathcal{V},n}^*(v)\Big)^{\frac{1}{p}},\label{eq:someineq}
%&\leq\Bigg(\sum\limits_{v_1,v_2,\cdots,v_n\in V\setminus\{v\}}^{\neq} \mathbb{P}\Big(\gamma=v,v_1\cdots,v_n\in\text{SAW}_{\mathcal{V},n}^*(v)\Big)^{\frac{1}{p}} \Bigg)\nonumber\cdot\\&\qquad\qquad\qquad\qquad\qquad\qquad\qquad\qquad\qquad\qquad\Bigg(\prod\limits_{i=1}^{n-1}\big(CD^{\lceil p'\rceil k(n-1)}\big)^{\frac{1}{p'(n-1)}}\Bigg),\label{eq:someineq}
\end{align}
where the last inequality uses \eqref{eq:cvprod}. Now we compute the $\sum\mathbb{P}\big(\gamma\in\text{SAW}_{\mathcal{V},n}^*(v)\big)^{\frac{1}{p}}$ term in \eqref{eq:someineq} using a path-reduction argument (to reduce the task of looking at SAWs in $\mathcal{V}^2$ to those in $\mathcal{V}$), and then using \eqref{eq:aeg}. We observe that any $\gamma=v,v_1,v_2,\cdots, v_n\in\text{SAW}_{\mathcal{V},n}^*(v)$ is a remnant of an $(m+n)$-length SAW in $\mathcal{V}$ starting from $v$, for some $0\leq m\leq n$. This means that there exist distinct vertices $w_1,w_2,\cdots,w_m\in V\setminus\{v,v_1,v_2,\cdots,v_n\}$ and integers $0\leq k_1<k_2<\cdots<k_m\leq n-1$ such that $v\sim\cdots\sim v_{k_1}\sim w_1\sim v_{k_1+1}\sim\cdots\sim v_{k_2}\sim w_2\sim v_{k_2+1}\cdots\sim v_{k_m}\sim\ w_m\sim v_{k_m+1}\sim\cdots\sim v_n$ (where $v_0:=v$). Thus we have,
\begin{align}\label{eq:union}
&\Big\{\gamma=v,v_1\cdots,v_n\in\text{SAW}_{\mathcal{V},n}^*(v)\Big\}\nonumber\\&=\bigcup_{m=0}^n\;\;\bigcup_{w_1,w_2,\cdots,w_m\in V\setminus\{v,v_1,v_2,\cdots,v_n\}}^{\neq}\;\;\bigcup_{0\leq k_1<k_2<\cdots<k_m\leq n-1}\Big\{v\sim\cdots\sim v_{k_1}\sim w_1\sim v_{k_1+1}\sim\nonumber\\&\qquad\cdots\sim v_{k_2}\sim w_2\sim v_{k_2+1}\cdots\sim v_{k_m}\sim\ w_m\sim v_{k_m+1}\sim\cdots\sim v_n\Big\}.
\end{align}
Using Boole's inequality on \eqref{eq:union}, we get,
\begin{align}\label{eq:bigineq}
&\mathbb{P}\Big(\gamma=v,v_1\cdots,v_n\in\text{SAW}_{\mathcal{V},n}^*(v)\Big)\nonumber\\&
\leq\sum_{m=0}^n\;\;\sum_{w_1,w_2,\cdots,w_m\in V\setminus\{v,v_1,v_2,\cdots,v_n\}}^{\neq}\;\;\sum_{0\leq k_1<k_2<\cdots<k_m\leq n-1}\mathbb{P}\Big(v\sim\cdots\sim v_{k_1}\sim w_1\sim v_{k_1+1}\sim\nonumber\\&\qquad\cdots\sim v_{k_2}\sim w_2\sim v_{k_2+1}\cdots\sim v_{k_m}\sim\ w_m\sim v_{k_m+1}\sim\cdots\sim v_n\Big). 
\end{align}

Using the fact that for $p > 1$ and a non-negative sequence $(\alpha_i)_{i\in\mathbb{N}}$, $\big(\sum_i \alpha_i\big)^{1/p} \leq \sum_i\alpha_i^{1/p}$,
\begin{align}\label{eq:badineq}
& \mathbb{P}\Big(\gamma=v,v_1\cdots,v_n\in\text{SAW}_{\mathcal{V},n}^*(v)\Big)^{\frac{1}{p}} \nonumber \\
& \leq\sum_{m=0}^n\;\;\sum_{w_1,w_2,\cdots,w_m\in V\setminus\{v,v_1,v_2,\cdots,v_n\}}^{\neq}\;\;\sum_{0\leq k_1<k_2<\cdots<k_m\leq n-1} \mathbb{P}\Big(v\sim\cdots\sim v_{k_1}\sim w_1\sim v_{k_1+1}\sim\nonumber\\&\qquad\cdots\sim v_{k_2}\sim w_2\sim v_{k_2+1}\cdots\sim v_{k_m}\sim w_m\sim v_{k_m+1}\sim\cdots\sim v_n\Big)^{\frac{1}{p}}.
\end{align}
Summing \eqref{eq:badineq} on every $n$-tuple with distinct entries in $V\setminus\{v\}$, and interchanging the order of summation in the RHS, we get,
\begin{align}
&\sum\limits_{v_1,v_2,\cdots,v_n\in V\setminus\{v\}}^{\neq} \mathbb{P}\Big(\gamma=v,v_1\cdots,v_n\in\text{SAW}_{\mathcal{V},n}^*(v)\Big)^{\frac{1}{p}}\nonumber\\&
\leq\sum_{m=0}^n\;\;\sum_{0\leq k_1<k_2<\cdots<k_m\leq n-1}\;\;\sum\limits_{v_1,v_2,\cdots,v_n\in V\setminus\{v\}}^{\neq}\;\;\sum_{w_1,w_2,\cdots,w_m\in V\setminus\{v,v_1,v_2,\cdots,v_n\}}^{\neq}\mathbb{P}\Big(v\sim\cdots\sim v_{k_1}\nonumber\\&\nonumber\sim w_1\sim v_{k_1+1}\sim\cdots\sim v_{k_2}\sim w_2\sim v_{k_2+1}\cdots\sim v_{k_m}\sim\ w_m\sim v_{k_m+1}\sim\cdots\sim v_n\Big)^{\frac{1}{p}}\nonumber\\
&=\sum_{m=0}^n\;\sum_{0\leq k_1<k_2<\cdots<k_m\leq n-1}\;\sum\limits_{u_1,u_2,\cdots,u_{m+n}\in V\setminus\{v\}}^{\neq}\mathbb{P}\Big(\gamma=v,u_1,\cdots,u_{m+n}\in\text{SAW}_{\mathcal{V},m+n}(v)\Big)^{\frac{1}{p}}\nonumber\\
&=\sum_{m=0}^{n}{n\choose{m}}\sum\limits_{u_1,u_2,\cdots,u_{m+n}\in V\setminus\{v\}}^{\neq}\mathbb{P}\Big(\gamma=v,u_1,\cdots,u_{m+n}\in\text{SAW}_{\mathcal{V},m+n}(v)\Big)^{\frac{1}{p}},\label{eq:combin}
\end{align}
where the factor ${n}\choose{m}$ is the number of tuples $(k_1,k_2,\cdots,k_m)$ with distinct increasing entries in $\{0,1,2,\cdots,n-1\}$. Using \eqref{eq:aeg} on \eqref{eq:combin}, we get,
\begin{equation}
\sum\limits_{v_1,v_2,\cdots,v_n\in V\setminus\{v\}}^{\neq} \mathbb{P}\Big(\gamma=v,v_1\cdots,v_n\in\text{SAW}_{\mathcal{V},n}^*(v)\Big)^{\frac{1}{p}}\leq\sum_{m=0}^n{n\choose{m}}a_vb_v^{m+n}=a_vb_v^n(1+b_v)^n\label{eq:finineq}.
\end{equation}
Substituting \eqref{eq:finineq} in \eqref{eq:someineq}, we obtain
\begin{align*}
\mathbb{E}\Big[\big(\boldsymbol{\Theta}_{v,(n)}^*\big)^{n-1}\Big]\leq a_vb_v^n(1+b_v)^n{\big(CD^{n-1}\big)^{\frac{1}{p'}}}=P_vQ_v^{n-1},
\end{align*}
where $P_v={C^{\frac{1}{p'}}a_vb_v(1+b_v)}$ and $Q_v={D^{\frac{1}{p'}}b_v(1+b_v)}$. This proves \eqref{eq:Xiexp}.

Now we consider self-updating systems. {Analogous to \eqref{eq:Xiexp} in} the proof for the general case, the crux of the proof {in the self-updating case} is to show that, for any $n>1$ and any $v\in V$, there exist $P_v',Q_v'>0$ such that,
\begin{equation}\label{eq:Xi'exp}
\mathbb{E}\big[\boldsymbol{\Theta}_{v,(n)}^{n-1}\big]\leq P_v'Q_v'^{n-1},
\end{equation}
where $\boldsymbol{\Theta}_{v,(n)}$ is the (random version of) $n$-step simple jump trail rate as in \eqref{eq:ndjrt}. The rest of the proof then follows by the use of Borel-Cantelli lemma, similar to the general case.

Similar to how we arrived at \eqref{eq:someineq}, we have, for $p'=\frac{p}{p-1}$,
\begin{align}\label{eq:oneineq'}
&\mathbb{E}\big[\boldsymbol{\Theta}_{v,(n)}^{n-1}\big]\leq {\big(CD^{n-1}\big)^\frac{1}{p'}}\cdot\sum\limits_{v_1,v_2,\cdots,v_n\in V\setminus\{v\}}^{\neq} \mathbb{P}\Big(\gamma=v,v_1\cdots,v_n\in\text{SAW}_{\mathcal{V},n}(v)\Big)^{\frac{1}{p}}.
\end{align}
Then we directly apply \eqref{eq:aeg} to \eqref{eq:oneineq'} to obtain \eqref{eq:Xi'exp}, with $P_v'={C^\frac{1}{p'}a_vb_v}$ and $Q_v'={D^{\frac{1}{p'}}b_v}$. Note that we do not need a path-reduction argument as in the general case, as \eqref{eq:oneineq'} already involves $\text{SAW}_{\mathcal{V},n}(v)$ and not $\text{SAW}_{\mathcal{V},n}^*(v)$.
\end{proof}

%\section{Examples}\label{sec:examples}
%We will now discuss some examples of interacting particle systems on random graphs satisfying {Condition $(i)$ of} Theorem \ref{thm:randips}. We first discuss examples of two random graphs in Subsection \ref{sec:rgex}. In Subsection \ref{sec:ipsex} we give examples of some interacting particle systems on such random graphs.

\section{Examples of Random Graphs Satisfying Condition $(i)$ of Theorem \ref{thm:randips}}\label{sec:rgex}
In this section we will discuss two random graph models which satisfy Condition $(i)$ of Theorem \ref{thm:randips}, i.e., \eqref{eq:aeg}, thus, by the application of Theorem \ref{thm:randips}, setting the stage for the construction of interacting particle systems on almost every realization of such random graphs. Both of these models can lack a uniform degree bound with a positive probability. We recall that a ``random graph" in this paper always refers to an undirected, simple random graph with no loops and with a deterministic vertex set. 

\subsection{Long-Range Percolation}\label{subs:lrp}
Let $V$ be a countable set of points with a family of edge probabilities $\boldsymbol{P}=(P_{v,w})_{v,w\in V;v\neq w}$, i.e., for every $v,w\in V$ with $v\neq w$, $v$ and $w$ have an edge with probability $P_{v,w}$. The probabilities are symmetric, i.e., for every $u,v\in V$ with $u\neq v$, $P_{u,v}=P_{v,u}.$ Let, for some $p\geq1$, the family $\boldsymbol{P}$ be \emph{uniformly $p$-summable}, i.e., $\sup_{u \in V}\sum_{v \in V \setminus \{u\}}P_{u,v}^{1/p}<\infty$. %satisfying the following conditions:
%\begin{itemize}
%\item[(i)] \emph{(non-negativity)} $J_{v,w}\geq0$ for every $v,w\in V$.
%\item[(i)] \emph{(symmetry)} $J_{v,w}=J_{w,v}$ for every $v,w\in V$.
%\item[(ii)]  For some $p \geq 1$,
%\begin{equation*}
%    J:=\sup\limits_{u \in V}\sum\limits_{v \in V \setminus \{u\}}J_{u,v}^{\frac{1}{p}}<\infty.
%\end{equation*}
%\end{itemize}
%We fix some $\beta>0$.
This defines a random graph $\mathcal{V}$ with $V$ as its vertex set, which we refer to as the \emph{long-range percolation on} $V$ \emph{with edge probabilities} $\boldsymbol{P}$ \emph{and exponent} $1/p$. %, which we denote as $\mathcal{V}^{\text{LRP}} :=\mathcal{V}^{\text{LRP}}_{p,\mathbf{P}}$\label{nom:lrp} on $V$.Uniform $1$-summability is enough to define this model. However, 
In order for Condition $(i)$ of Theorem \ref{thm:randips}, i.e., \eqref{eq:aeg} to apply we need uniform $p$-summability for some $p>1$, and we prove this now.

\begin{prop} \label{prop:lrpaeg}
For any countable set $V$, a long-range percolation $\mathcal{V}$ on $V$ with edge probabilities $\boldsymbol{P}$ and exponent $1/p$ satisfies Condition $(i)$ of Theorem \ref{thm:randips}, i.e., \eqref{eq:aeg} if $p>1$. Specifically, for any vertex $v$, any $n\in\mathbb{N}$, and any $p>1$ for which $\boldsymbol{P}$ is uniformly $p$-summable,
\begin{equation*}
\sum\limits_{v_1,v_2,\cdots,v_n\in V\setminus\{v\}}^{\neq}\mathbb{P}\big(\gamma=v,v_1,v_2,\cdots,v_n\in\text{SAW}_{\mathcal{V},n}(v)\big)^{\frac{1}{p}}\leq P^n,
\end{equation*}
where $P:=\sup\limits_{u \in V}\sum\limits_{v \in V \setminus \{u\}}P_{u,v}^{\frac{1}{p}}<\infty$.
\end{prop}
Proposition \ref{prop:lrpaeg} renders the long-range percolation model suitable for the construction of interacting particle systems. This is guaranteed by Theorem \ref{thm:randips}.
\begin{proof}[Proof of Proposition \ref{prop:lrpaeg}]
For any $v\in V$ and any $n\in\mathbb{N}$,
\begin{align}
&\sum\limits_{v_1,v_2,\cdots,v_n\in V\setminus\{v\}}^{\neq}\mathbb{P}\big(\gamma=v,v_1,v_2,\cdots,v_n\in\text{SAW}_{\mathcal{V},n}(v)\big)^{\frac{1}{p}}\nonumber\\
&\leq\sum\limits_{v\neq v_1\neq v_2\neq\cdots\neq v_n}\mathbb{P}\big(v\sim v_1\big)^{\frac{1}{p}}\mathbb{P}\big(v_1\sim v_2\big)^{\frac{1}{p}}\cdots\mathbb{P}\big(v_{n-1}\sim v_n\big)^{\frac{1}{p}}\qquad\text{(using edge independence)}\nonumber\\
%&=\sum\limits_{v_1\in V\setminus\{v\}}\Big(1-e^{-\beta J_{v,v_1}}\Big)^{\frac{1}{p}}\sum\limits_{v_2\in V\setminus\{v_1\}}\Big(1-e^{-\beta J_{v_1,v_2}}\Big)^{\frac{1}{p}}\cdots\sum\limits_{v_n\in V\setminus\{v_{n-1}\}}\Big(1-e^{-\beta J_{v_{n-1},v_n}}\Big)^{\frac{1}{p}}\nonumber\\
&=\sum\limits_{v_1\in V\setminus\{v\}}P_{v,v_1}^{\frac{1}{p}}\sum\limits_{v_2\in V\setminus\{v_1\}}P_{v_1,v_2}^{\frac{1}{p}}\cdots\sum\limits_{v_n\in V\setminus\{v_{n-1}\}}P_{v_{n-1},v_n}^{\frac{1}{p}}\leq P^n,\label{eq:uninteg}
\end{align}
where \eqref{eq:uninteg} follows from uniform $p$-summability.
\end{proof}

In practice, $V$ is usually the vertex set of a deterministic graph, hence the name \textit{long-range percolation}\index{long-range percolation}. A special case is when $V$ is the vertex set of a deterministic \emph{quasi-transitive graph}, i.e., there exists a finite \textit{representative set of vertices} $\{v_1,v_2,\cdots,v_n\}$, such that for any $v\in V$, there exists a graph automorphism $\sigma$, such that $\sigma v=v_i$ for some $i\in\{1,2,\cdots,n\}$. This naturally partitions $V$ into $n$ sets of vertices $\langle v_1\rangle\ni v_1,\langle v_2\rangle\ni v_2,\cdots,\langle v_n\rangle\ni v_n$. We can then define edge probabilities on such a quasi-transitive graph satisfying the following two properties which together guarantee uniform $p$-summability:
\begin{itemize}
\item[(i)] \emph{(automorphism-invariance)} $P_{\sigma u,\sigma v}=P_{u,v}$ for all $u\neq v$ and all graph automorphisms $\sigma$.
\item[(ii)] \emph{($p$-summability)} For some $p\geq1$, $\max\limits_{i=1}^n\sum\limits_{v\in V\setminus\{v_i\}}P_{v_i,v}^{\frac{1}{p}}<\infty$, where the vertices $(v_i)_{1\leq i\leq n}$ are as defined above.
\end{itemize}
 
This has a further special case, where $V$ is the vertex set of a deterministic \emph{transitive} graph, i.e., the representative set of vertices is a singleton set, and we denote its single element as $\textbf{0}$. The condition of $p$-summability then further reduces to $\sum\limits_{v\in V\setminus\{\textbf{0}\}}P_{\textbf{0},v}^\frac{1}{p}<\infty$. This special case is discussed in \cite{duminil2015newproof}, {wherein the edge probabilities are given as $P_{v,w}:=1-e^{-\beta J_{v,w}}$ for every $v,w\in V$ with $v\neq w$, where $(J_{v,w})_{v,w\in V; v\neq w}$ is a collection of non-negative constants symmetric in $(v,w)$, referred to in the paper as \emph{coupling constants}}. Next, we show that the long-range percolation on $\mathbb{Z}^d$ with the nearest-neighbour graph structure can lack a uniform degree bound almost surely.

\begin{prop}\label{prop:noubdeg}
Let $\mathcal{V}$ be the long-range percolation on $\mathbb{Z}^d$ with nearest-neighbour graph structure, such that all the edge probabilities are strictly positive. Then $\mathcal{V}$ lacks a uniform degree bound almost surely, i.e., $\mathbb{P}\big(\sup_{v\in\mathbb{Z}^d}\deg_\mathcal{V}(v)=\infty\big)=1$.
\end{prop}

\begin{proof}
Since all the edge probabilities are strictly positive, one can show that for any $k\in\mathbb{N}$, there exists a deterministic $N(k)\in\mathbb{N}$, such that the box $\big[-N(k),N(k)\big)^d$ contains at least $k$ vertices all of which share an edge (in the long-range percolation graph structure) with $\textbf{0}$ with a positive probability, i.e., 
\begin{equation}\label{eq:boxat0}
    p_\mathbf{0}:=\mathbb{P}\Big(\Big|\big[-N(k),N(k)\big)^d\cap\mathcal{N}_{\mathcal{V},\textbf{0}}\Big|\geq k\Big)>0.
\end{equation}
By automorphism-invariance of the edge probabilities (as $\mathbb{Z}^d$ with the nearest-neighbour structure is a transitive graph), it follows that for any $v\in\mathbb{Z}^d$,
\begin{equation}\label{eq:boxshift}
    \mathbb{P}\Big(\Big|\big([-N(k),N(k))^d+v\big)\cap\mathcal{N}_{\mathcal{V},v}\Big|\geq k\Big)=p_\mathbf{0}>0.
\end{equation}
Using this, for each $k\in\mathbb{N}$, we perform a thinning on $\mathcal{V}$ as follows. We first tessellate $\mathbb{R}^d$ with the box $\big[-N(k),N(k)\big)^d$ and its all possible disjoint translates. We denote the set of vertices which lie at the centers of these boxes as $\Xi_k$. Thus, $\textbf{0}\in\Xi_k$. Next, we delete all the edges of $\mathcal{V}$ that cross any boundary between any two of the boxes. If we denote the degrees of the resultant thinned graph as $\deg_k(v)$ for each $v\in\mathbb{Z}^d$, then for each $v\in\mathbb{Z}^d$, $\deg_k(v)\leq\deg_{\mathcal{V}}(v)$. Also, for any $v,w\in\mathbb{Z}^d$, if $v$ and $w$ lie in two disjoint boxes, $\deg_k(v)$ and $\deg_k(w)$ are independent random variables. In particular $\{\deg_k(v)\}_{v\in{\Xi_k}}$ is a collection of independent random variables such that for each $v\in\Xi_k$, $\mathbb{P}\big(\deg_k(v)\geq k\big)=p_\mathbf{0}>0$. Thus, by the second Borel-Cantelli Lemma, $\mathbb{P}\big(\deg_k(v)\geq k\text{ for some }v\in\Xi_k\text{ i.o.}\big)=1$. Since $\deg_{\mathcal{V}}\geq\deg_k$, this implies, for each $k\in\mathbb{N}$, $\mathbb{P}\big(\deg_\mathcal{V}(v)\geq k\text{ for some }v\in\mathbb{Z}^d\text{ i.o.}\big)=1$. This implies $\mathbb{P}\Big(\bigcap\limits_{k\in\mathbb{N}}\big\{\deg_\mathcal{V}(v)\geq k\text{ for some }v\in\mathbb{Z}^d\text{ i.o.}\big\}\Big)=1$, which in turn implies $\mathbb{P}\big(\sup\limits_{v\in \mathbb{Z}^d}\deg_\mathcal{V}(v)=\infty\big)=1$.
\end{proof}

%{It can be shown using blocking arguments and the Kolmogorov's $0$-$1$ law, that for any $d\in\mathbb{N}$, the long-range percolation on $\mathbb{Z}^d$ with nearest-neighbour graph structure, with any choice of $\beta>0$ and with strictly positive coupling constants, lacks a uniform degree bound almost surely, i.e., for such a model, $\mathbb{P}\big(\sup_{v\in V}\deg(v)=\infty\big)=1$. A brief sketch of the proof of this fact is given in the Appendix (Section \ref{app:noubdeg}).} In {the Appendix (Section \ref{app:randgr})} we prove that $\mathcal{V}^{\text{LRP}}$ satisfies \eqref{eq:aeg} (Proposition \ref{prop:lrpaeg}).% Hence, by Theorem \ref{thm:randips}, we can almost surely construct an interacting particle system on $\mathcal{V}^{\text{LRP}}$.
We finally prove the product moment inequality given in \eqref{eq:pm} for the long-range percolation.

\begin{prop}\label{prop:pmlrp}
For a long-range percolation $\mathcal{V}$ on any countable set $V$ with edge probabilities $\boldsymbol{P}$ and exponent $1/p$ for some $p>1$, there exist $C,D>0$, such that for $p':=p/(p-1)$, for any $n\in\mathbb{N}$ and any $n$ distinct vertices $v_1,v_2,\cdots,v_n\in V$,
\begin{equation}\label{eq;pmlrp}
\mathbb{E}\Big[\prod_{i=1}^n\deg(v_i)^{p'}\Big]\leq CD^n.
\end{equation}
\end{prop}
\begin{proof}
Let for any $u,v\in V$, $X_{u,v}:=\mathbbm{1}_{u\sim v}$ denote the indicator random variable for the edge $\{u,v\}$. For any non-negative random variable $Y$ and any $t>0$, we have the inequality $Y^{p'} \leq C_{p'} e^{tY}$ for constant $C_{p'} = (p'/te)^{p'}$, which is derived using basic calculus by finding the maximum value of the function $f(y) = y^{p'} e^{-ty}$. Applying this to each degree term, we obtain,
\begin{equation*}
\prod_{i=1}^n \deg(v_i)^{p'} \leq \prod_{i=1}^n \left( C_{p'} e^{t \cdot \deg(v_i)} \right) = C_{p'}^n \exp\left(t \sum_{i=1}^n \deg(v_i)\right).
\end{equation*}
Taking expectations, we obtain,
\begin{equation}\label{eq:pmlrpexp}
\mathbb{E}\left[\prod_{i=1}^n \deg(v_i)^{p'}\right] \leq C_{p'}^n \mathbb{E}\left[ \exp\left(t \sum_{i=1}^n \deg(v_i)\right) \right].
\end{equation}
The sum of degrees counts every edge incident to the set $\{v_1, \dots, v_n\}$. Let $E_{int}$ be the set of ``internal" edges between vertices in $\{v_1, \dots, v_n\}$ and $E_{ext}$ be the set of ``external" edges connecting a vertex in $\{v_1, \dots, v_n\}$ to a vertex outside. Then we can write,
\begin{equation*}
\sum_{i=1}^n \deg(v_i) = \sum_{i=1}^n \sum_{u \neq v_i} X_{v_i,u} = 2\sum_{\{u,v\} \in E_{int}}  X_{u,v} + \sum_{\{u',v'\} \in E_{ext}}  X_{u',v'},  
\end{equation*}
as the internal edges are counted twice (the internal edge $\{v_i, v_j\}$ contributes to both $\deg(v_i)$ and $\deg(v_j)$) and the external edges are counted once. Next, since edge indicators are mutually independent, the expectation in \eqref{eq:pmlrpexp} factors over edges, and thus we get,
\begin{equation*}
\mathbb{E}\left[ \exp\left(t \sum_{i=1}^n \deg(v_i)\right) \right] = \prod_{\{u,v\} \in E_{int}} \mathbb{E}[e^{2t X_{u,v}}]\;\;\;\;\cdot \prod_{\{u',v'\} \in E_{ext}} \mathbb{E}[e^{t X_{u',v'}}].
\end{equation*}
Using the moment-generating function of a Bernoulli variable and the fact that for any $x>0$, $1+x\leq e^x$ holds, we obtain,
\begin{align*}
&\mathbb{E}\left[ \exp\left(t \sum_{i=1}^n \deg(v_i)\right) \right]\leq \exp\left( \sum_{\{u,v\} \in E_{int}} P_{u,v} (e^{2t} - 1) + \sum_{\{u',v'\} \in E_{ext}} P_{u',v'} (e^t - 1) \right)
\\&\leq \exp\left((e^{2t} - 1)\cdot\Big(\sum_{\{u,v\} \in E_{int}} P_{u,v} + \sum_{\{u',v'\} \in E_{ext}} P_{u',v'}\Big) \right) \leq \exp\Big((e^{2t} - 1)\sum_{i=1}^n \sum_{u \neq v_i} P_{v_i, u} \Big) \leq \big(e^{(e^{2t} - 1)P_0}\big)^n,
\end{align*}
where $P_0:=\sup_{v\in V}\sum_{w\neq v}P_{v,w}<\infty$ by uniform $1$-summability (uniform $p$-summability for $p>1$ implies uniform $1$-summability). Combining with \eqref{eq:pmlrpexp},
\begin{equation*}
\mathbb{E}\left[\prod_{i=1}^n \deg(v_i)^{p'}\right] \leq \left( C_{p'} \cdot e^{(e^{2t}-1)P_0} \right)^n
\end{equation*}
which is \eqref{eq;pmlrp} with $C:=1$ and $D:=C_p \cdot e^{(e^{2t}-1)P_0}$.
\end{proof}

\subsection{Geometric Random Graph}\label{subs:grg}
We consider a metric space $(\mathbb{M},\delta)$. Let $V\subset\mathbb{M}$ be countable, such that every pair of points in it is separated by a minimum distance. We can always set this minimum separating distance to $1$ by appropriate rescaling, i.e., we can assume without loss of generality that for any $v,w\in V$,
\begin{equation}\label{eq:hardcore}
    {\delta(v,w)\geq1}.
\end{equation}
Further, let, for some integer $s>1$,
\begin{equation}\label{eq:moms}
\sup\limits_{v\in V}\sum\limits_{w\in V\setminus\{v\}}\frac{1}{\delta(v,w)^s}<\infty.
\end{equation}
%\eqref{eq:moms} implies local finiteness of $V$ as a subset of $\mathbb{M}$. Also, 
Clearly \eqref{eq:moms} holds for any $s'>s$. Let $p>1$ and %$\tilde{p}=\max\big\{sp,\frac{p}{p-1}\big\}$. Let 
$Z$, which we call the \textit{grain radius,} be a non-negative random variable such that,
\begin{equation}\label{eq:grain}
\mathbb{E}\big[Z^{s\cdot\max\{p,p'\}}\big]<\infty.
\end{equation}
{Our random graph $\mathcal{V}$, which we call a \textit{geometric random graph with grain radius} $Z$ \emph{and exponent} $s$, is as follows.} At every $v\in V$, we draw a sphere (which we call a \emph{grain} at $v$) with random radius $R_v$, such that $R_v$'s are i.i.d., distributed as $Z$. We put an edge between any two $u,v\in V$ if $u\in\textbf{B}_{R_v}(v)$ and $v\in\textbf{B}_{R_u}(u)$, i.e., if $\delta(u,v)<\min\{R_v,R_u\}$ (for any $R>0$ and any $v\in\mathbb{M}$, $\mathbf{B}_R(v):=\{w:w\in\mathbb{M},\delta(v,w)<R\}$). We now prove that this random graph satisfies Condition $(i)$ of Theorem \ref{thm:randips}, i.e., \eqref{eq:aeg} for any $p$ described above.

\begin{prop}\label{prop:grgaeg}
{Let $V$ be any countable subset of a metric space such that \eqref{eq:hardcore} and \eqref{eq:moms} hold for some integer $s>1$, and $\mathcal{V}$ be a geometric random graph on $V$ with grain radius $Z$ and exponent $s$. Then $\mathcal{V}$ satisfies Condition $(i)$ of Theorem \ref{thm:randips}, i.e., \eqref{eq:aeg} for any $p>1$ for which \eqref{eq:grain} holds. Specifically, for any vertex $v\in V$, any $n\in\mathbb{N}$ and any $p>1$ for which \eqref{eq:grain} holds, }
\begin{equation*}
{\sum\limits_{v_1,v_2,\cdots,v_n\in V\setminus\{v\}}^{\neq}\mathbb{P}\big(\gamma=v,v_1,v_2,\cdots,v_n\in\text{SAW}_{\mathcal{V},n}(v)\big)^{\frac{1}{p}}\leq  \mathbb{E}[Z^{sp}]^{\frac{1}{p}}\big(S\mathbb{E}[Z^{sp}]^{\frac{1}{p}}\big)^n},
\end{equation*}
where $S:=\sup\limits_{v\in V}\sum\limits_{w\in V\setminus\{v\}}\frac{1}{\delta(v,w)^s}<\infty$.
\end{prop}
{Proposition \ref{prop:grgaeg} renders the geometric random graph model suitable for construction of interacting particle systems. This is guaranteed by Theorem \ref{thm:randips}.}

\begin{proof}
Let $s'=sp(>1)$. We first compute the probability of any $n$-length SAW $v,v_1,v_2,\cdots,v_n$ starting from $v$. This probability is,
\begin{align*}
\begin{split}
\mathbb{P}\big(R_v>\delta(v,v_1),R_{v_1}>\max\{\delta(v,v_1),\delta(v_1,v_2)\},R_{v_2}>\max\{\delta(v_1,v_2),\delta(v_2,v_3)\},\cdots,&\\R_{v_{n-1}}>\max\{\delta(v_{n-2},v_{n-1}),\delta(v_{n-1},v_n)\},R_{v_n}>\delta(v_{n-1},v_n)\big),
\end{split}
\end{align*}
which can be rewritten as,
\begin{align}\label{eq:geompr}
\begin{split}
\mathbb{P}\big(R_v>\delta(v,v_1)\big)\mathbb{P}\big(R_{v_1}>\max\{\delta(v,v_1),\delta(v_1,v_2)\}\big)\mathbb{P}\big(R_{v_2}>\max\{\delta(v_1,v_2),\delta(v_2,v_3)\}\big)\cdots&\\\mathbb{P}\big(R_{v_{n-1}}>\max\{\delta(v_{n-2},v_{n-1}),\delta(v_{n-1},v_n)\}\big)\mathbb{P}\big(R_{v_n}>\delta(v_{n-1},v_n)\big),
\end{split}
\end{align}
using the independence of $R_v$'s.
Now, using Markov's inequality,
\begin{align*}
\begin{split}
\eqref{eq:geompr}\leq\frac{\mathbb{E}[R_v^{s'}]}{\delta(v,v_1)^{s'}}\cdot\frac{\mathbb{E}[R_{v_1}^{s'}]}{\max\{\delta(v,v_1),\delta(v_1,v_2)\}^{s'}}\cdot\frac{\mathbb{E}[R_{v_2}^{s'}]}{\max\{\delta(v_1,v_2),\delta(v_2,v_3)\}^{s'}}\cdot\cdots\cdot&\\\frac{\mathbb{E}[R_{v_{n-1}}^{s'}]}{\max\{\delta(v_{n-2},v_{n-1}),\delta(v_{n-1},v_n)\}^{s'}}\cdot\frac{\mathbb{E}[R_{v_n}^{s'}]}{\delta(v_{n-1},v_n)^{s'}}
\end{split}&\\
%\begin{split}
%=\frac{\mathbb{E}[Z^{s'}]}{\delta(v,v_1)^{s'}}\cdot\frac{\mathbb{E}[Z^{s'}]}{\max\{\delta(v,v_1),\delta(v_1,v_2)\}^{s'}}\cdot\frac{\mathbb{E}[Z^{s'}]}{\max\{\delta(v_1,v_2),\delta(v_2,v_3)\}^{s'}}\cdot\cdots\cdot&\\\frac{\mathbb{E}[Z^{s'}]}{\max\{\delta(v_{n-2},v_{n-1}),\delta(v_{n-1},v_n)\}^{s'}}\cdot\frac{\mathbb{E}[Z^{s'}]}{\delta(v_{n-1},v_n)^{s'}}
%\end{split}&\\
\begin{split}
\leq\frac{\mathbb{E}[Z^{s'}]}{\delta(v,v_1)^{s'}}\cdot\frac{\mathbb{E}[Z^{s'}]}{\big(\delta(v,v_1)\delta(v_1,v_2)\big)^\frac{s'}{2}}\cdot\frac{\mathbb{E}[Z^{s'}]}{\big(\delta(v_1,v_2)\delta(v_2,v_3)\big)^\frac{s'}{2}}\cdot\cdots\cdot&\\\frac{\mathbb{E}[Z^{s'}]}{\big(\delta(v_{n-2},v_{n-1})\delta(v_{n-1},v_n)\big)^\frac{s'}{2}}\cdot\frac{\mathbb{E}[Z^{s'}]}{\delta(v_{n-1},v_n)^{s'}}
\end{split}&\\
\begin{split}
=\frac{\mathbb{E}[Z^{s'}]^{n+1}}{\delta(v,v_1)^\frac{3s'}{2}\delta(v_1,v_2)^{s'}\delta(v_2,v_3)^{s'}\cdots \delta(v_{n-2},v_{n-1})^{s'}\delta(v_{n-1},v_n)^\frac{3s'}{2}}.
\end{split}
\end{align*}
Therefore, we have, similar to the proof of Proposition \ref{prop:lrpaeg},
\begin{align*}
&\sum\limits_{v_1,v_2,\cdots,v_n\in V\setminus\{v\}}^{\neq}\mathbb{P}\big(\gamma=v,v_1,v_2,\cdots,v_n\in\text{SAW}_{\mathcal{V},n}(v)\big)^{\frac{1}{p}}
\\&\leq\sum\limits_{v\neq v_1\neq\cdots v_n}\Bigg(\frac{\mathbb{E}[Z^{s'}]^{n+1}}{\delta(v,v_1)^\frac{3s'}{2}\delta(v_1,v_2)^{s'}\delta(v_2,v_3)^{s'}\cdots \delta(v_{n-2},v_{n-1})^{s'}\delta(v_{n-1},v_n)^\frac{3s'}{2}}\Bigg)^{\frac{1}{p}}&\\
\begin{split}
=\big(\mathbb{E}[Z^{s'}]\big)^{\frac{n+1}{p}}\sum\limits_{v_1\in V\setminus\{v\}}\frac{1}{\delta(v,v_1)^\frac{3s}{2}}\sum\limits_{v_2\in V\setminus\{v_1\}}\frac{1}{\delta(v_1,v_2)^s}\sum\limits_{v_3\in V\setminus\{v_2\}}\frac{1}{\delta(v_2,v_3)^s}\cdots&\\\sum\limits_{v_{n-1}\in V\setminus\{v_{n-2}\}}\frac{1}{\delta(v_{n-2},v_{n-1})^s}\sum\limits_{v_n\in V\setminus\{v_{n-1}\}}\frac{1}{\delta(v_{n-1},v_n)^\frac{3s}{2}}\qquad\qquad\text{(Since $p=\frac{s'}{s}$)}
\end{split}
\\&\leq \mathbb{E}[Z^{sp}]^{\frac{1}{p}}\big(S\mathbb{E}[Z^{sp}]^{\frac{1}{p}}\big)^n,
\end{align*}
where the last step uses \eqref{eq:moms} and \eqref{eq:grain}.
\end{proof}% Theorem \ref{thm:randips} then implies that we can almost surely construct an interacting particle system on $\mathcal{V}^\text{GRG}$.
 
The most natural example for which \eqref{eq:moms} holds, would be $\mathbb{Z}^d$ (as a subset of the Euclidean metric space $\mathbb{R}^d$, for any $d\in\mathbb{N}$). However we prove \eqref{eq:moms} for a fairly important class of countable subsets of the Euclidean space -- \emph{linearly repetitive Delone sets}. We will prove \eqref{eq:moms} for $\mathbb{Z}^d$ as a part of this proof. A subset $V$ of any metric space $(\mathbb{M},\delta)$ is called a \emph{Delone set} if there exist $r_\text{pack},r_\text{cov} > 0$ with:
\begin{enumerate}
    \item[(i)] \emph{(Relative density)} Each ball of radius $r_\text{cov}$ in $\mathbb{M}$ contains at least one point of $V$.
    \item[(ii)] \emph{(Uniform discreteness)} Each ball of radius $r_\text{pack}$ in $\mathbb{M}$ contains at most one point of $V$.
\end{enumerate}
Uniform discreteness implies that any Delone set has a minimum separating distance $\leq r_{\text{pack}}$, and thus, it can be rescaled so as to satisfy \eqref{eq:hardcore}.  Let $d\geq2$. A Delone set $V$ of $\mathbb{R}^d$ is called \emph{repetitive} if for each $r > 0$, there exists $M > 0$ such that for each $z\in\mathbb{R}^d$, and for each $x\in\mathbb{R}^d$, there exists $y\in V\cap\textbf{B}_M(z)$, such that:
\begin{equation}
V\cap\textbf{B}_r(y)=\big(V\cap\textbf{B}_r(x)\big)+y-x.
\end{equation}
The smallest such $M$ is denoted by $M_V(r)$. If there exists $L > 0$ such that $M_V(r) \leq Lr$, then $V$ is called a \emph{linearly repetitive Delone set}. Some examples of linearly repetitive Delone sets include $\mathbb{Z}^d$, all (translates of) lattices (by a lattice we mean a locally finite subgroup of $(\mathbb{R}^d,+)$; see e.g. \cite{daniele2010lattice}) and various quasicrystal-type aperiodic point sets (see for example \cite{lagarias_2003}). We have the following result, Proposition \ref{thm:delmoms}, which allows us to construct geometric random graphs on linearly repetitive Delone sets. The proof uses the concept of bi-Lipschitz equivalence. Let $(\mathbb{M},\delta)$ be a metric space. Let $V_1,V_2\subset\mathbb{M}$ be countable subsets. Then $V_1$ and $V_2$ are said to be \emph{bi-Lipschitz equivalent} to each other if there exists a homeomorphism $\Phi:V_1\to V_2$ and a constant $K>0$ such that,
\begin{equation}\label{eq:bilip}
\frac{1}{K}\delta(v,v')\leq \delta\big(\Phi(v),\Phi(v')\big)\leq K\delta(v,v')
\end{equation}
for every $v,v'\in V$. Such a homeomorphism is called a \emph{bi-Lipschitz map}.

\begin{prop}\label{thm:delmoms}
For any $d\geq2$, every linearly repetitive Delone set $V$ in $\mathbb{R}^d$ satisfies \eqref{eq:moms} with $s\geq d+1$.
\end{prop}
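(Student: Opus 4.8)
The plan is to reduce the estimate for $V$ to the same estimate for the standard lattice $\mathbb{Z}^d$ and transport it back by a bi-Lipschitz change of variables, so the argument splits into a short lattice computation, a short transfer step, and one genuinely substantial geometric input. A preliminary remark: \eqref{eq:moms} requires only that \emph{some} integer exponent $s>1$ make the supremum finite, and (as noted just before the statement) finiteness at $s$ entails finiteness at every larger exponent, so it suffices to prove $S(V;s)<\infty$ for all $s$ past some threshold; that threshold turns out to be $s>d$---for $s\in(1,d]$ and $d\geq 2$ the supremum is in fact infinite, since the relative-density half of the Delone property forces $\#\{w\in V:\delta(u,w)\leq R\}\gtrsim R^{d}$ and a dyadic-annulus lower bound then makes $\sum_{w}\delta(u,w)^{-s}$ diverge. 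Thus the content of the proposition is that $S(V;s)<\infty$ for every $s>d$, which is exactly what is needed to invoke \eqref{eq:moms}.

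First I would settle the lattice case: by translation invariance $S(\mathbb{Z}^d;s)=\sum_{w\in\mathbb{Z}^d\setminus\{0\}}|w|^{-s}$, and partitioning $\mathbb{Z}^d\setminus\{0\}$ into the $\ell^{\infty}$-shells $\Lambda_n=\{w:\|w\|_{\infty}=n\}$ gives $|\Lambda_n|=(2n+1)^d-(2n-1)^d=O(n^{d-1})$ while $|w|\geq n$ on $\Lambda_n$, so $S(\mathbb{Z}^d;s)\leq\sum_{n\geq 1}O(n^{d-1})\,n^{-s}<\infty$ whenever $s>d$; since $d\geq 2$ such exponents exceed $1$ and may be taken integral. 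Next the transfer: suppose $\Phi:\mathbb{Z}^d\to V$ is a bi-Lipschitz equivalence with constant $K$ in the sense of \eqref{eq:bilip}, so that $\delta(\Phi(m),\Phi(m'))\geq K^{-1}|m-m'|$. Fixing $u\in V$ and writing $u=\Phi(m)$ (using surjectivity of $\Phi$),
\[
\sum_{w\in V\setminus\{u\}}\frac{1}{\delta(u,w)^{s}}=\sum_{m'\in\mathbb{Z}^d\setminus\{m\}}\frac{1}{\delta(\Phi(m),\Phi(m'))^{s}}\leq K^{s}\sum_{m'\in\mathbb{Z}^d\setminus\{m\}}\frac{1}{|m-m'|^{s}}=K^{s}\,S(\mathbb{Z}^d;s);
\]
taking the supremum over $u$ yields $S(V;s)\leq K^{s}\,S(\mathbb{Z}^d;s)<\infty$ for every $s>d$.

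The remaining ingredient, and the main obstacle, is the existence of such a $\Phi$: every linearly repetitive Delone set $V\subset\mathbb{R}^d$, $d\geq 2$, is bi-Lipschitz equivalent to $\mathbb{Z}^d$. This is where linear repetitivity is genuinely used; mere Deloneness does not suffice, since for $d\geq 2$ there exist Delone sets of $\mathbb{R}^d$ that are not bi-Lipschitz equivalent to $\mathbb{Z}^d$ (Burago--Kleiner, McMullen). I would obtain $\Phi$ by invoking the known fact that linearly repetitive Delone sets are rectifiable, i.e.\ bi-Lipschitz equivalent to a lattice, and then composing with a linear isomorphism of that lattice onto $\mathbb{Z}^d$; alternatively one can construct $\Phi$ by hand, using linear repetitivity to decompose $\mathbb{R}^d$ into a scale-hierarchy of patches whose $V$-cardinalities are comparable---uniformly across all levels---to those of dyadic boxes of $\mathbb{Z}^d$, and matching patches level by level while propagating the Lipschitz bounds down the hierarchy. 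The delicate points are keeping the distortion bounded uniformly over \emph{all} scales---a naive greedy nearest-point matching accumulates error and need not be bi-Lipschitz---and ensuring that $\Phi$ is onto $V$, which is precisely what legitimizes the supremum over $u\in V$ in the transfer step. For the sole purpose of feeding \eqref{eq:moms} into Theorem~\ref{thm:randips} one could even bypass $\Phi$ entirely, since the uniform-discreteness half of the Delone property already gives $\#\{w\in V:\delta(u,w)\leq R\}=O(R^d)$ and hence $S(V;s)<\infty$ for $s>d$ by the same shell count applied directly to $V$; but the bi-Lipschitz route is the one consistent with the apparatus set up before the statement.
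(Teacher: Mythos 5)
Your proposal is correct and follows essentially the same route as the paper: a shell-count estimate for $\mathbb{Z}^d$, the cited fact that every linearly repetitive Delone set in $\mathbb{R}^d$ ($d\geq2$) is bi-Lipschitz equivalent to $\mathbb{Z}^d$ (the paper invokes exactly this rectifiability result), and a transfer of the estimate through the bi-Lipschitz map, which is the content of Lemma \ref{lem:bilip}. Two of your side observations are worth keeping: the literal ``for any $s>1$'' in the statement indeed fails for $1<s\leq d$ (relative density forces divergence there), and the paper's own argument only establishes the exponent $s=d+1$, which is all that \eqref{eq:moms} requires since it asks for finiteness at \emph{some} integer exponent; so your restriction to $s>d$ is the correct reading rather than a gap. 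Your final remark is also apt: uniform discreteness alone gives $\#\{w\in V:\delta(u,w)\leq R\}=O(R^d)$ with constants depending only on $r_{\text{pack}}$, so the same shell count applied directly to $V$ yields $S(V;s)<\infty$ for $s>d$ without any bi-Lipschitz input, a strictly more elementary route than the one the paper (and your main argument) takes; the bi-Lipschitz machinery buys nothing beyond consistency with Lemma \ref{lem:bilip} as stated.
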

\begin{proof}
We first prove that \eqref{eq:moms} holds for $\mathbb{Z}^d$. It can be shown that there exists $k>0$, such that {for any $v\in\mathbb{Z}^d$}, $\mathbb{Z}^d\cap\mathbf{B}_k(v)\setminus\{v\}\neq\emptyset$ and the number of points in $\mathbb{Z}^d$ between the distances $r$ and $r+k$ from $v$ is ${\theta(r^{d-1})}$, i.e., $\big|\mathbb{Z}^d\cap\big(\mathbf{B}_{r+k}(v)\setminus\mathbf{B}_r(v)\big)\big|={\theta(r^{d-1})}$.
Then, using local finiteness of $\mathbb{Z}^d$, for any $v\in\mathbb{Z}^d$, we get,
\begin{align*}
S_v&:=\sum\limits_{w\in\mathbb{Z}^d\setminus\{v\}}\frac{1}{|v-w|^{d+1}}\\&\leq\frac{\big|\mathbb{Z}^d\cap\mathbf{B}_k(v)\setminus\{v\}\big|}{\Big(\min\limits_{w\in\mathbb{Z}^d\cap\mathbf{B}_k(v)\setminus\{v\}}|v-w|\Big)^{d+1}}+\sum\limits_{n=1}^\infty\frac{\big|\mathbb{Z}^d\cap\big(\mathbf{B}_{nk+k}(v)\setminus\mathbf{B}_{nk}(v)\big)\big|}{(nk)^{d+1}}<\infty.
\end{align*}
Finally, the fact that $\mathbb{Z}^d$ is invariant under translation by any element of $\mathbb{Z}^d$ itself, guarantees that for any $u,v\in\mathbb{Z}^d$, $S_u=S_v$, and hence, \eqref{eq:moms} holds for $\mathbb{Z}^d$ with $s\geq d+1$. This, along with the facts that for any $d\geq2$, every linearly repetitive Delone set in $\mathbb{R}^d$ is bi-Lipschitz equivalent to $\mathbb{Z}^d$ \cite[Theorem $2.1$]{alisteprieto2013}, and that \eqref{eq:moms} is invariant under any bi-Lipschitz map (Lemma \ref{lem:bilip}), proves Proposition \ref{thm:delmoms}.
\end{proof}

We finally state and prove Lemma \ref{lem:bilip} which is used in the proof of Proposition \ref{thm:delmoms}, and which implies that if two countable subsets of a metric space are bi-Lipschitz equivalent, then if a geometric random graph can be constructed on one of them, then a geometric random graph can be constructed on the other.
\begin{lemma}\label{lem:bilip}
Let $(\mathbb{M},\delta)$ be a metric space. Let $V_1,V_2\subset\mathbb{M}$ be countable subsets that are bi-Lipschitz equivalent to each other. Then, for any $s>1$, \eqref{eq:moms} holds for $V_1$ iff it holds for $V_2$.
\end{lemma}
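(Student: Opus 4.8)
The plan is to show directly that bi-Lipschitz equivalence rescales each single-vertex sum $\sum_{w'\in V\setminus\{w\}}\delta(w,w')^{-s}$ by a factor depending only on $K$ and $s$, so that finiteness of the supremum defining $S(\cdot\,;s)$ in \eqref{eq:moms} is preserved. First I would record two elementary observations: the bi-Lipschitz map $\Phi:V_1\to V_2$ is a bijection (being a homeomorphism), and its inverse $\Phi^{-1}:V_2\to V_1$ satisfies the same two-sided bound \eqref{eq:bilip} with the same constant $K$. Hence it suffices to prove a single implication, say that $S(V_1;s)<\infty$ forces $S(V_2;s)<\infty$, and then apply the same argument to $\Phi^{-1}$ for the converse.

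For that implication, fix $w\in V_2$ and write $w=\Phi(v)$ for the unique $v\in V_1$. Reindexing the sum over $V_2\setminus\{w\}$ along the bijection $w'=\Phi(v')$ with $v'$ ranging over $V_1\setminus\{v\}$, and invoking the lower bound in \eqref{eq:bilip}, $\delta(\Phi(v),\Phi(v'))\geq K^{-1}\delta(v,v')$, one obtains
\begin{equation*}
\sum_{w'\in V_2\setminus\{w\}}\frac{1}{\delta(w,w')^s}=\sum_{v'\in V_1\setminus\{v\}}\frac{1}{\delta(\Phi(v),\Phi(v'))^s}\leq K^s\sum_{v'\in V_1\setminus\{v\}}\frac{1}{\delta(v,v')^s}\leq K^s\,S(V_1;s).
\end{equation*}
Taking the supremum over $w\in V_2$ gives $S(V_2;s)\leq K^sS(V_1;s)<\infty$, and the symmetric argument with $\Phi^{-1}$ gives $S(V_1;s)\leq K^sS(V_2;s)$, completing the equivalence.

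I do not expect any genuine obstacle in this lemma. The only points that need a moment's care are verifying that $\Phi$ is a bijection so the reindexing $w'\mapsto\Phi^{-1}(w')$ is legitimate, and noting that only the lower Lipschitz bound is used in each direction (the estimate in fact holds for any exponent $s\geq 0$; the hypothesis $s>1$ is relevant only insofar as it is the range in which \eqref{eq:moms} is being applied elsewhere).
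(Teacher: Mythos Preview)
Your proposal is correct and essentially identical to the paper's own proof: both reindex the sum defining $S(V_2;s)$ via the bijection $\Phi$, apply the lower bi-Lipschitz bound $\delta(\Phi(v),\Phi(v'))\geq K^{-1}\delta(v,v')$ to obtain $S(V_2;s)\leq K^sS(V_1;s)$, and handle the converse symmetrically. Your presentation is in fact slightly cleaner in that you explicitly note $\Phi^{-1}$ satisfies the same two-sided bound, whereas the paper appeals separately to the two inequalities in \eqref{eq:bilip}.
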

\begin{proof}
We assume \eqref{eq:moms} holds for $V_1$. Then, using the first inequality of \eqref{eq:bilip} and the fact that $\Phi:V_1\to V_2$ is a homeomorphism,
\begin{align}
&\sup\limits_{v\in V_2}\sum\limits_{w\in V_2\setminus\{v\}}\frac{1}{\delta(v,w)^s}\leq K^s\sup\limits_{v\in V_2}\sum\limits_{w\in V_2\setminus\{v\}}\frac{1}{\delta(\Phi^{-1}(v),\Phi^{-1}(w))^s}\nonumber
\\&=K^s\sup\limits_{v\in V_1}\sum\limits_{w\in V_1\setminus\{\Phi^{-1}(v)\}}\frac{1}{\delta(\Phi^{-1}(v),\Phi^{-1}(w))^s}<\infty,\nonumber
\end{align}
which proves one direction. The converse can be proven using the second inequality of \eqref{eq:bilip}.
\end{proof} 

Next, we show that for any $d\in\mathbb{N}$, the geometric random graph on $\mathbb{Z}^d$ with the Euclidean metric structure, with any choice of $s>1$ and with a grain radius having unbounded support lacks a uniform degree bound almost surely.

\begin{prop}\label{prop:noubdeg2}
Let $\mathcal{V}$ be a geometric random graph on $\mathbb{Z}^d$ with the Euclidean metric structure, with any choice of exponent $s>1$ and with a grain radius having unbounded support. Then $\mathcal{V}$ lacks a uniform degree bound almost surely, i.e., $\mathbb{P}\big(\sup\limits_{v\in\mathbb{Z}^d}\deg_\mathcal{V}(v)=\infty\big)=1$..
\end{prop}

\begin{proof}
Since the grain radius has unbounded support, all the edge probabilities are strictly positive. Now for any $k\in\mathbb{N}$, there exists a deterministic positive integer $L(k)>0$, such that the box $[-L(k),L(k))^d$ contains at least $k$ vertices other than $\mathbf{0}$. We choose $k$ specific vertices $v_1,v_2,\cdots,v_k\in[-L(k),L(k))^d$ and denote the event, $E_k:= \{ R_{\mathbf{0}} > 2L(k)\sqrt{d} , R_{v_1} > 2L(k)\sqrt{d}, \cdots, R_{v_k} > 2L(k)\sqrt{d} \}$. Then independence of the grain radii and the fact that each of them has an unbounded support imply $p_{\mathbf{0}}:=\mathbb{P}(E_k)>0$.
\begin{comment}
Then,
\begin{equation}
    \mathbb{P}\big(\big|\{v:v\in\mathbb{Z}^d\cap\Lambda_k\cap\mathcal{N}_{\mathcal{V},\textbf{0}}\}\big|\geq k\big)\geq p_\mathbf{0}>0.
\end{equation}
Since the grains have i.i.d. radii, it follows that the geometric random graph has a translation-invariant degree distribution, and hence for any $v\in\mathbb{Z}^d$,
\begin{equation}
    \mathbb{P}\Big(\Big|\Big\{w:w\in\mathbb{Z}^d\cap\big(\Lambda_k+v\big)\cap\mathcal{N}_{\mathcal{V},v}\Big\}\Big|\geq k\Big)\geq p_\mathbf{0}>0.
\end{equation}
\end{comment}
For each $k\in\mathbb{N}$, we perform a thinning on the geometric random graph as we did for the long-range percolation. We first tile $\mathbb{R}^d$ with the box $[-L(k),L(k))^d$ and its all possible disjoint translates. We denote the set of vertices which lie at the centers of these boxes as $\Xi_k$. Thus, $\textbf{0}\in\Xi_k$. Next, we delete all the edges of $\mathcal{V}$ that cross the boundaries between any two of the boxes. If we denote the degrees of the resultant thinned graph as $\deg_k(\cdot)$, then for each $v\in\mathbb{Z}^d$, $\deg_k(v)\leq\deg_{\mathcal{V}}(v)$. Also, if for any $v\in\Xi_k$, we denote the event $    E_{k,v}:= \{ R_v > 2L(k)\sqrt{d} , R_{v_1+v} > 2L(k)\sqrt{d}, \cdots, R_{v_k+v} > 2L(k)\sqrt{d} \}$, then, since the events $(E_{k,v})_{v\in\Xi_k}$ depend on the grain radii of disjoint sets of vertices, the independence of grain radii imply the independence of $(E_{k,v})_{v\in\Xi_k}$. Further $\mathbb{P}(E_{k,v})=p_\mathbf{0}>0$ for each $v\in\Xi_k$, which follows from the fact that the graph is on $\mathbb{Z}^d$, and the grain radii are i.i.d., hence translation-invariant. Thus, by the second Borel-Cantelli Lemma, $\mathbb{P}\big(E_{k,v}\text{ for some }v\in\Xi_k\text{ i.o.}\big)=1$. Further, for any $k\in\mathbb{N}$ and any $v\in\Xi_k$, $E_{k,v}\subseteq\{\deg_k(v)\geq k\}$. Since $\deg_{\mathcal{V}}\geq\deg_k$, we have, for each $k\in\mathbb{N}$,
\begin{equation*}
    \mathbb{P}(E_{k,v})\leq\mathbb{P}(\deg_k(v)\geq k)\leq\mathbb{P}(\deg_\mathcal{V}(v)\geq k),
\end{equation*}
which implies $\mathbb{P}\big(\deg_\mathcal{V}(v)\geq k\text{ for some }v\in\mathbb{Z}^d\text{ i.o.}\big)=1$, and hence $\mathbb{P}\Big(\bigcap\limits_{k\in\mathbb{N}}\big\{\deg_\mathcal{V}(v)\geq k\text{ for some }v\in\mathbb{Z}^d\text{ i.o.}\big\}\Big)=1$, for each $k\in\mathbb{N}$. This in turn implies $\mathbb{P}\big(\sup\limits_{v\in\mathbb{Z}^d}\deg_\mathcal{V}(v)=\infty\big)=1$.
\end{proof}

We finally prove the product moment inequality given in \eqref{eq:pm} for the geometric random graph.

\begin{prop}\label{prop:pmgrg}
Let $V$ be any countable subset of a metric space such that \eqref{eq:hardcore} and \eqref{eq:moms} hold for some $s>1$, and $\mathcal{V}$ be a geometric random graph on $V$ with exponent $s$ and grain radius $Z$ satisfying \eqref{eq:grain} for some $p>1$. Then there exist $C,D>0$, such that for $p':=p/(p-1)$, for any $n\in\mathbb{N}$ and any $n$ distinct vertices $v_1,v_2,\cdots,v_n\in V$,
\begin{equation}\label{eq;pmgrg}
\mathbb{E}\Big[\prod_{i=1}^n\deg(v_i)^{p'}\Big]\leq CD^n.
\end{equation}
\end{prop}
\begin{proof}
For any $v\in V$, $\deg(v)$ is the number of vertices $u\in V\setminus\{v\}$ such that $\delta(u, v) < \min\{R_u, R_v\}$. Since $\min\{R_u, R_v\} \le R_v$, we have,
\begin{equation*}
\deg(v) \leq \overline{R}_v := \sum_{u \neq v} \mathbbm{1}_{\delta(u, v) < R_v}.
\end{equation*}
$\overline{R}_{v_1}, \cdots, \overline{R}_{v_n}$ are mutually independent as they depend only on the i.i.d. radii $R_{v_1}, \dots, R_{v_n}$. Thus,
\begin{equation}\label{eq:pmgrgexp}
\mathbb{E}\left[\prod_{i=1}^n \deg(v_i)^{p'}\right] \leq \prod_{i=1}^n \mathbb{E}[\overline{R}_{v_i}^{p'}]
\end{equation}
Next, we bound $\overline{R}_v$ in terms of $R_v$ using \eqref{eq:moms}. We observe that $\mathbbm{1}_{A < B}\leq (B/A)^s$ whenever $A, B > 0$ and $s > 0$. Applying this to the condition $\delta(u, v) < R_v$, we get,
\begin{equation*}
\overline{R}_v = \sum_{u \neq v}\mathbbm{1}_{\delta(u, v) < R_v} \leq \sum_{u \neq v}\left(\frac{R_v}{\delta(u, v)}\right)^s= R_v^s \sum_{u \neq v} \frac{1}{\delta(u, v)^s}\leq R_v^s\cdot S,
\end{equation*}
where $S:=\sup_{v\in V}\sum_{u\neq v}\frac{1}{\delta(u,v)^s}<\infty$ by \eqref{eq:moms}. Using this in \eqref{eq:pmgrgexp} yields, 
\begin{equation*}
\mathbb{E}\left[\prod_{i=1}^n \deg(v_i)^{p'}\right]\leq\prod_{i=1}^n\mathbb{E}\big[(S R_v^s)^{p'}\big] =\Big(S^{p'}\mathbb{E}\big[Z^{sp'}\big]\Big)^n,
\end{equation*}
where the finiteness of $\mathbb{E}\big[Z^{sp'}\big]$ is guaranteed by \eqref{eq:grain}. This completes the proof with $C:=1$ and $D:=S^{p'}\mathbb{E}\big[Z^{sp'}\big]$. %Condition (ii) of Theorem 4.2 holds for the Geometric Random Graph in any metric space satisfying (5.4) and (5.5). This remains true even if the radius distribution $Z$ has unbounded support (implying unbounded degrees almost surely)7, provided the $(sp)$-th moment exists.
\end{proof}

\section{Examples of Interacting Particle Systems}\label{sec:ipsex}
In each of the upcoming subsections of this section, we will give examples of interacting particle systems on any almost surely locally finite random graph $\mathcal{V}$ which satisfies Condition $(i)$ of Theorem \ref{thm:randips}, i.e., \eqref{eq:aeg}, and \eqref{eq:pm}, e.g., all random graphs with almost surely uniformly bounded degrees, or the random graph models described in the Section \ref{sec:rgex}. In each example of interacting particle system we will define a family of jump rate kernels on the graph, such that for each vertex $v$ of the random graph, $\boldsymbol{c}_v\leq K\deg(v)$ is satisfied for some $K>0$. This and \eqref{eq:pm} readily implies Condition $(ii)$ of Theorem \ref{thm:randips}, i.e., \eqref{eq:cvprod}, and hence Theorem \ref{thm:randips} applies to the example, guaranteeing the existence of a family of interacting particle systems on $\mathcal{V}$ for almost every realization of the model.

\subsection{Divisible Sandpile Model}
$\mathcal{V}$ is an almost surely locally finite random graph with vertex set $V$, satisfying Condition $(i)$ of Theorem \ref{thm:randips}, i.e., \eqref{eq:aeg}, and \eqref{eq:pm} for some $p>1$. We construct a continuous-time sandpile model on $\mathcal{V}$, inspired by the Abelian sandpile model discussed in \cite{Dhar_1999}. More about sandpile models can be found in \cite{corry2018divisors,kalinin2024sandpile}. In our model, every vertex in $V$ has been assigned a non-negative mass of a sandpile standing on it. Each vertex has a cut-off/capacity, and if the mass exceeds the cut-off, the sandpile at that vertex topples and gets distributed equally among its neighbours. The topplings, however, do not occur immediately upon exceeding the cut-off, but at a rate proportional to the degrees of the vertices as well a saturating function of the heights of the sandpiles.

The local state space is $\mathbb{Y} = [0, \infty)$, the non-negative half-line, which is Polish. In our context, the local state represents mass. The entire sandpile at any vertex $v\in V$ ``topples" when its mass $x(v)$ exceeds its ``capacity", some $C_v>0$. The toppled mass is distributed equally among its neighbours. %The instability of the system is given by the random environment $(I_v)_{v\in V}$ with finite $p'$-th moment. 
If for any realization $\omega$ we denote the new global state after a toppling at any $v\in V$ as $x^{v,\omega}$, then,
\begin{equation*}
 x^{v,\omega}(w)  : = \left\{
\begin{array}{ll}
       0&\text{if }w=v\\
       x(w) + \frac{x(v)}{|\mathcal{N}_{\mathcal{V}(\omega),v}|-1}&\text{if }w \in \mathcal{N}_{\mathcal{V}(\omega),v} \setminus \{v\} \\
       x(w)&\text{if }w \not\in \mathcal{N}_{\mathcal{V}(\omega),v}\\
\end{array} 
\right. 
\end{equation*}

Then, for some $K>0$, the jump rate kernels on $\mathcal{V}$ are then given by,
\begin{equation}\label{eq:jrksandpile}    
\alpha_v(x,\mathcal{A};\omega): = \left\{
\begin{array}{ll}
       0&\text{if }x(v)\leq C_v\\
       K\frac{x(v)}{1+x(v)}\cdot\deg_{\mathcal{V}(\omega)}(v)\mathbbm{1}_{\mathcal{A}}(x^{v,\omega})&\text{if }x(v)>C_v\\
\end{array} 
\right. 
\end{equation}
Measurability of the map $\omega\mapsto\int_{\mathbb{Y}^V}f(y)\alpha_v(x,dy;\omega)$ for every $v\in V$, $x\in\mathbb{Y}^V$ and $f\in\mathcal{C}$ is straighforward (we can write $\deg_{\mathcal{V}(\omega)(v)}=\sum_{w\in V}\mathbbm{1}_{w\in\mathcal{N}_v}(\omega)$ as a sum of indicators). Furthermore, the total jump rate map is given as $\alpha_v(x;\omega)=K\frac{x(v)}{1+x(v)}\deg_{\mathcal{V}(\omega)}(v)\mathbbm{1}_{x(v)>C_v}$, which is LSC, in $x$ for every $v\in V$ and $\omega\in\Omega$.

By the discussion on jump rate kernels on random graphs in Subsection \ref{subs:randfr}, this implies that for each $v\in V$, $\boldsymbol{c}_v$ is a well-defined $[0,\infty]$-valued random variable, and in fact, we have, for each $v\in V$, $\boldsymbol{c}_v=\sup_{x\in\mathbb{Y}^V} \boldsymbol{\alpha}_v(x) = K\deg(v)$. Together with \eqref{eq:pm}, this implies that the model satisfies Condition $(ii)$ of Theorem \ref{thm:randips}, i.e., \eqref{eq:cvprod}, and hence satisfies both the conditions of Theorem \ref{thm:randips}. This enables us to apply Theorem \ref{thm:randips} to the model, and hence we conclude that almost every realization of the model admits a family of interacting particle systems conforming to the corresponding realized jump rate kernels.

\subsection{Consensus Formation Model}
$\mathcal{V}$ is an almost surely locally finite random graph with vertex set $V$, satisfying Condition $(i)$ of Theorem \ref{thm:randips}, i.e., \eqref{eq:aeg}, and \eqref{eq:pm} for some $p>1$. We construct a consensus formation model on $\mathcal{V}$, inspired by the discrete-time model discussed in \cite{Krause2000}. In our model, at every vertex in $V$ is sitting an expert, who has their own opinion but is open to some extent to revise it when being informed about the opinions of all the other experts in their neighbourhood. The revision is a ``weighted average" of all the neighbouring opinions, where weights denote the general consonance between two experts.

The local state space is $\mathbb{Y}:= [0,1]$, which is compact and hence Polish, and which we identify as a space of opinions. $V$ is assigned a family of non-negative weights $(a_{u,v})_{u,v\in V}$, such that for each $u,v\in V$, $a_{u,v}=a_{v,u}$, and for each $v\in V$, $\sum_{u\in V}a_{u,v}=1$. In practice, the graph has some additional geometric structure (e.g., Euclidean) and there is an interplay among the weights, this geometry (e.g., the weights decay with distance at a suitable rate and the weight of a vertex with itself is usually very high, close to $1$), and the edge probabilities. %The oppenness of the experts to opinions different from theirs is given by the random environment $(M_v)_{v\in V}$ with finite $p'$-th moment.
The expert at any vertex $v\in V$ revises their opinion $x(v)$ at a rate that proportional to the total disagreement with their neighbours. If for any realization $\omega$ we denote the new global state after a revision of opinion at any $v\in V$ as $x^{v,\omega}$, then,
\begin{equation*}
 x^{v,\omega}(w) :  = \left\{
\begin{array}{ll}
       \sum_{w\in\mathcal{N}_{\mathcal{V}(\omega),v}}a_{v,w}x(w)&\text{if }w=v\\
       x(w)&\text{if }w\neq v\\
\end{array} 
\right. 
\end{equation*}
The jump rate kernels on $\mathcal{V}$ are then given by,
\begin{equation}\label{eq:jrkconsensus}    
\alpha_v(x,\mathcal{A};\omega): = \sum_{w \in \mathcal{N}_{\mathcal{V}(\omega),v}} |x(v)-x(w)|\cdot\mathbbm{1}_\mathcal{A}(x^{v,\omega}).
\end{equation}
Measurability of the map $\omega\mapsto\int_{\mathbb{Y}^V}f(y)\alpha_v(x,dy;\omega)$ for every $v\in V$, $x\in\mathbb{Y}^V$ and $f\in\mathcal{C}$ is straightforward. Furthermore, the total jump rate is $\alpha_v(x;\omega)=\sum_{w \in \mathcal{N}_{\mathcal{V}(\omega),v}} |x(v)-x(w)|$, which is continuous, and hence LSC, in $x$ for every $v\in V$ and $\omega\in\Omega$.

By the discussion on jump rate kernels on random graphs in Subsection \ref{subs:randfr}, this implies that for each $v\in V$, $\boldsymbol{c}_v$ is a well-defined $[0,\infty]$-valued random variable, and in fact, we have, for each $v\in V$, $\boldsymbol{c}_v=\sup_{x\in\mathbb{Y}^V} \boldsymbol{\alpha}_v(x) = \deg(v)$. Together with \eqref{eq:pm}, this implies that the model satisfies Condition $(ii)$ of Theorem \ref{thm:randips}, i.e., \eqref{eq:cvprod}, and hence satisfies both the conditions of Theorem \ref{thm:randips}. This enables us to apply Theorem \ref{thm:randips} to the model, and hence we conclude that almost every realization of the model admits a family of interacting particle systems conforming to the corresponding realized jump rate kernels.

\subsection{Contact Process Based On Random Selection}
$\mathcal{V}$ is an almost surely locally finite random graph with vertex set $V$, satisfying Condition $(i)$ of Theorem \ref{thm:randips}, i.e., \eqref{eq:aeg}, and \eqref{eq:pm} for some $p>1$. We construct a contact process on $\mathcal{V}$ based on random selection. This is a variant of the popular contact process commonplace in the literature on interacting particle systems, e.g., \cite{swart2017course}. In our model, at every vertex in $V$ is an organism which can be either healthy or infected. An infected vertex either chooses any of its healthy neighbours uniformly at random and infects it, or recovers itself to become healthy, without affecting any of its neighbours.

The local state space is $\mathbb{Y}:=\{0,1\}$, where $0$ and $1$ represent a healthy or an infected site respectively. %The infectability and the recoverability of any organism are given by the random environment $(I_v,R_v)_{v\in V}$ with finite $p'$-th moment.
An infected organism at any vertex $v\in V$ either infects one of its neighbours chosen uniformly randomly at the rate $\lambda>0$, or recovers itself at the rate $1$. Let for any realization $\omega\in\Omega$, any vertex $v\in V$ and any global configuration $x\in\{0,1\}^V$, $V_0^\omega(v,x):=\big\{w\in\mathcal{N}_{\mathcal{V}(\omega),v}\setminus\{v\}\big|\,x(w)=0\big\}$ denote the set of neighbours of $v$ that are healthy in the current configuration $x$ in the realization $\omega$. If we denote the new global state after an infection by the organism at any $v\in V$ of its neighbouring organism at $w\in\mathcal{N}_{\mathcal{V}(\omega),v}$ as $x^{v,\omega}_w$, and that after a recovery of the organism at any $v\in V$ as $x^{v,\omega}$, then,
\noindent\begin{minipage}{0.5\linewidth}
        \begin{equation*}
 x^{v,\omega}_w(u)  : = \left\{
\begin{array}{ll}
       1&\text{if }u=w\\
       x(u)&\text{if }u\neq w\\
\end{array} 
\right. 
\end{equation*}
    \end{minipage}%
    \begin{minipage}{0.5\linewidth}
        \begin{equation*}
 x^{v,\omega}(u)  : = \left\{
\begin{array}{ll}
       0&\text{if }u=v\\
       x(u)&\text{if }u\neq v\\
\end{array} 
\right. 
\end{equation*}
    \end{minipage}%

The jump rate kernels on $\mathcal{V}$ are then given by,
\begin{equation}\label{eq:jrkcontact}    
\alpha_v(x,\mathcal{A};\omega): = \left\{
\begin{array}{ll}
       0&\text{if }x(v)=0\\
      \lambda\sum_{w\in V_0^\omega(v,x)}\mathbbm{1}_{\mathcal{A}}(x_w^{v,\omega})+\mathbbm{1}_{\mathcal{A}}(x^{v,\omega})&\text{if }x(v)=1\\
\end{array} 
\right.
\end{equation}
Measurability of the map $\omega\mapsto\int_{\mathbb{Y}^V}f(y)\alpha_v(x,dy;\omega)$ for every $v\in V$, $x\in\mathbb{Y}^V$ and $f\in\mathcal{C}$ is straightforward. Furthermore, the total jump rate $\alpha_v(x;\omega)=\mathbbm{1}_{x(v)=1}\big(\lambda\sum_{w\in\mathcal{N}_{\mathcal{V}(\omega),v}}\mathbbm{1}_{x(w)=0}+1\big)$ is LSC in $x$ for every $v\in V$ and $\omega\in\Omega$.

By the discussion on jump rate kernels on random graphs in Subsection \ref{subs:randfr}, this implies that for each $v\in V$, $\boldsymbol{c}_v$ is a well-defined $[0,\infty]$-valued random variable, and in fact, we have, for each $v\in V$, $\boldsymbol{c}_v=\sup_{x\in\mathbb{Y}^V} \alpha_v(x) = \lambda\deg(v)+1\leq(\lambda+1)\deg(v)$. Together with \eqref{eq:pm}, this implies that the model satisfies Condition $(ii)$ of Theorem \ref{thm:randips}, i.e., \eqref{eq:cvprod}, and hence satisfies both the conditions of Theorem \ref{thm:randips}. This enables us to apply Theorem \ref{thm:randips} to the model, and hence we conclude that almost every realization of the model admits a family of interacting particle systems conforming to the corresponding realized jump rate kernels.

\subsection{Two-Coloured Interacting Urn Model}
This is a continuous-time version of a model of recent interest \cite{Kaur_Sahasrabudhe_2023}. $\mathcal{V}$ is an almost surely locally finite random graph with vertex set $V$, satisfying Condition $(i)$ of Theorem \ref{thm:randips}, i.e., \eqref{eq:aeg}, and \eqref{eq:pm} for some $p>1$. At each vertex of $V$ is an urn with some black and some white balls. The local states are the numbers of white and black balls in each urn. Every urn chooses a ball at random from itself, at a rate proportional to its degree, and then, for some fixed $\alpha,\beta,m\in\mathbb{N}$ with $\alpha,\beta\leq m$,
\begin{enumerate}
    \item[(a)] Adds $\alpha$ white balls and $m-\alpha$ black balls to each neighbour, if the chosen ball is white.
    \item[(b)] Adds $m-\beta$ white balls and $\beta$ black balls to each neighbour, if the chosen ball is black.
\end{enumerate}

The local state space is $\mathbb{Y}:=\mathbb{N}_0^2$, which is Polish, and the local states are therefore ordered tuples representing the numbers of white and black balls respectively. %The random environment $(M_v)_{v\in V}$ represents the ``drawing frequency" of the vertices, i.e., the rates at which the urns draw balls randomly.
Let for any $x\in(\mathbb{N}_0^2)^V$ and any $v\in V$, $x_W(v)$ and $x_B(v)$ denote the numbers of white and black balls in the urn at $v$ respectively, i.e., $x(v)=(x_W(v),x_B(v))$. If for any realization $\omega$ we denote the new global state after drawing a white and a black ball from the urn at any $v\in V$ as $x_1^{v,\omega}$ and $x_2^{v,\omega}$ respectively, then,

\begin{equation*}
 x^{v,\omega}_1(w)   = \left\{
\begin{array}{ll}
       x(v)&\text{if }w\not\in\mathcal{N}_{\mathcal{V}(\omega),v}\setminus\{v\}\\
       \big(x_W(w)+\alpha,x_B(w)+m-\alpha\big)&\text{if }w\in\mathcal{N}_{\mathcal{V}(\omega),v}\setminus\{v\}\\
\end{array} 
\right. 
\end{equation*}
\begin{equation*}
 x^{v,\omega}_2(w)   = \left\{
\begin{array}{ll}
       x(v)&\text{if }w\not\in\mathcal{N}_{\mathcal{V}(\omega),v}\setminus\{v\}\\
       \big(x_W(w)+m-\beta,x_B(w)+\beta\big)&\text{if }w\in\mathcal{N}_{\mathcal{V}(\omega),v}\setminus\{v\}\\
\end{array} 
\right. 
\end{equation*}
For some $K'>0$, the random jump rate kernels are then given by,
\begin{equation}\label{eq:jrkcolorurn}    
\alpha_v(x,\mathcal{A};\omega) = \frac{K'\deg_{\mathcal{V}(\omega)}(v)}{1+x_W(v)+x_B(v)} \cdot \left[ x_W(v) \mathbbm{1}_{\mathcal{A}}\big(x_1^{v,\omega}\big) + x_B(v) \mathbbm{1}_{\mathcal{A}}\big(x_2^{v,\omega}\big)\right],
\end{equation}
Clearly, these are random transition kernels on $\mathbb{Y}^{\mathcal{N}_v}$. Measurability of the map $\omega\mapsto\int_{\mathbb{Y}^V}f(y)\alpha_v(x,dy;\omega)$ for every $v\in V$, $x\in\mathbb{Y}^V$ and $f\in\mathcal{C}$ is straightforward. Furthermore, the total jump rate $\alpha_v(x;\omega)=\frac{K'\deg_{\mathcal{V}(\omega)}(v)\big(x_W(v)+x_B(v)\big)}{1+x_W(v)+x_B(v)}$ is continuous and hence LSC in $x$ for every $v\in V$ and $\omega\in\Omega$.

By the discussion on jump rate kernels on random graphs in Subsection \ref{subs:randfr}, this implies that for each $v\in V$, $\boldsymbol{c}_v$ is a well-defined $[0,\infty]$-valued random variable, and in fact, we have, for each $v\in V$, $\boldsymbol{c}_v=\sup_{x\in\mathbb{Y}^V} \alpha_v(x) = \sup_{(W,B)\in\mathbb{N}_0^2}K'\deg(v)\frac{W+B}{1+W+B}=\deg(v)$. Together with \eqref{eq:pm}, this implies that the model satisfies Condition $(ii)$ of Theorem \ref{thm:randips}, i.e., \eqref{eq:cvprod}, and hence satisfies both the conditions of Theorem \ref{thm:randips}. This enables us to apply Theorem \ref{thm:randips} to the model, and hence we conclude that almost every realization of the model admits a family of interacting particle systems conforming to the corresponding realized jump rate kernels.

\subsection{Bak-Sneppen-Based Evolution Model}

This is a continuous-time and infinite-population variant of the Bak-Sneppen evolution model introduced in \cite{baksneppen1993}. $\mathcal{V}$ is an almost surely locally finite random graph with vertex set $V$, satisfying Condition $(i)$ of Theorem \ref{thm:randips}, i.e., \eqref{eq:aeg}, and \eqref{eq:pm} for some $p>1$. At each vertex of $V$ is a species of organism and each species is assigned a fitness level, a number between $0$ and $1$. If the fitness level of the species at any vertex $v\in V$ is the lowest in its neighbourhood, then each species in its neighbourhood, including itself, is assigned an independent fitness level chosen uniformly in $[0,1]$, each of the new fitness levels being independent of all the current ones, and this assignment occurs at a rate proportional to the degree of $v$.

The local state space is $\mathbb{Y}:=[0,1]$, which is compact and therefore Polish, and the local states represent fitness levels of the species. %The random environment $(E_v)_{v\in V}$ with finite $p'$-th moment represents the ``evolvability" of the vertices, i.e., the rate at which the species evolve by updating their fitness levels.
For some $K_1>0$, the jump rate kernels on $\mathcal{V}$ are given by,

\begin{equation}\label{eq:jrkbaksnappen}    
\alpha_v(x,\mathcal{A};\omega) = \left\{
\begin{array}{ll}
       0&\text{if }x(v)\neq\min_{w\in\mathcal{N}_{\mathcal{V}(\omega),v}}x(w)\\
       K_1 \sum_{W<V}\mathbbm{1}_{\mathcal{N}_v=W}(\omega)\cdot(|W|-1) \cdot U_W(\mathcal{A})&\text{if }x(v)=\min_{w\in\mathcal{N}_{\mathcal{V}(\omega),v}}x(w)\\
\end{array} 
\right.
\end{equation}
where $U_W$ is the product measure of i.i.d. uniform measures on $[0,1]$ for the coordinates in $W$ and the Dirac measure $\delta_{x(v)}(\cdot)$ for every $v\not\in W$. Clearly, these are random transition kernels on $\mathbb{Y}^{\mathcal{N}_v}$. Measurability of the map $\omega\mapsto\int_{\mathbb{Y}^V}f(y)\alpha_v(x,dy;\omega)$ for every $v\in V$, $x\in\mathbb{Y}^V$ and $f\in\mathcal{C}$ is straightforward. Furthermore the total jump rate $\alpha_v(x;\omega)=\mathbbm{1}_{x(v)=\min_{w\in\mathcal{N}_{\mathcal{V}(\omega),v}}x(w)}\cdot K_1\deg_{\mathcal{V}(\omega)}(v)$ is LSC in $x$ for every $v\in V$ and $\omega\in\Omega$. By the discussion on jump rate kernels on random graphs in Subsection \ref{subs:randfr}, this implies that for each $v\in V$, $\boldsymbol{c}_v$ is a well-defined $[0,\infty]$-valued random variable, and in fact, we have, for each $v\in V$, $\boldsymbol{c}_v=K_1\deg(v)$. Together with \eqref{eq:pm}, this implies that the model satisfies Condition $(ii)$ of Theorem \ref{thm:randips}, i.e., \eqref{eq:cvprod}, and hence satisfies both the conditions of Theorem \ref{thm:randips}. This enables us to apply Theorem \ref{thm:randips} to the model, and hence we conclude that almost every realization of the model admits a family of interacting particle systems conforming to the corresponding realized jump rate kernels.

\section{Existence and Convergence Using Graphical Construction}\label{sec:graph}
The current section discusses graphical construction as a tool to prove existence and convergence of interacting particle systems. In Subsection \ref{subs:graphical}, we will formally introduce the notions of graphical construction, and will place it in the context of this paper through two propositions -- Propositions \ref{prop:exstcggc} and \ref{prop:ggcexstc}. In brief, Proposition \ref{prop:exstcggc} states that the assumptions of Theorem \ref{thm:exist_cvg_ips} are sufficient for the graphical construction on $\mathcal{V}$ to not percolate, and Proposition \ref{prop:ggcexstc} states that if the graphical construction on $\mathcal{V}$ does not percolate, the conclusions of Theorem \ref{thm:exist_cvg_ips} hold. In effect, Propositions \ref{prop:exstcggc} and \ref{prop:ggcexstc} together serve as a proof of Theorem \ref{thm:exist_cvg_ips}. We will prove Propositions \ref{prop:exstcggc} and \ref{prop:ggcexstc} in Subsections \ref{subs:exstcggc} and \ref{subs:ggcexstc} respectively.
 
We have already discussed in Subsection \ref{subs:gen} that proving the existence of interacting particle systems on infinite graphs is non-trivial. To address this issue, we resort to \emph{graphical construction}\index{graphical construction}, a popular technique also referred to in the literature as \emph{Poisson graphical construction}, \emph{graphical representation}, \emph{Poisson representation}, etc. The idea is to show that, in any finite time \cite{Penrose2008existence} or in a small enough time \cite{Durrett1995}, the graphical construction does not percolate, i.e., the set of all vertices that can \emph{affect} or \emph{be affected by} (in a sense that will be formalized in the following subsections) any given vertex in the graph is almost surely finite. Once this is done, the process can then be shown to bootstrap through small increments of time \cite{Harris1972,Durrett1995} or along \emph{generations} \cite{Penrose2008existence}.

\subsection{Graphical Construction}\label{subs:graphical}

We recall our framework discussed in Subsection \ref{subs:gen}. $\mathcal{V}=(V,E)$ is a locally finite graph (we recall that by ``graph" we always refer to a countable, undirected, simple graph with no loops), $\mathbb{Y}$ is a Polish space. We have a family of jump rate kernels $(\alpha_v^*)_{v\in V}$, and $c_v$'s are defined as in \eqref{eq:jrknbd}.
 
We will now introduce a graphical construction on $\mathcal{V}$. Let $(\mathcal{P}_v)_{v\in V}$ be a family of independent homogeneous Poisson point processes in $[0,\infty)\times[0,1]$ of intensities $c_v$ respectively. We label the points of $\mathcal{P}_v$ as $\big(T_i(v),U_i(v)\big)_{i\in\mathbb{N}}$, with $T_1(v)<T_2(v)<T_3(v)<\cdots$ almost surely. In particular, for each $v\in V$, $\big(T_i(v)\big)_{i\in\mathbb{N}}$ is a homogeneous Poisson process on $[0,\infty)$ with intensity $c_v$ and with i.i.d. marks $\big(U_i(v)\big)_{i\in\mathbb{N}}$, such that for each $i\in\mathbb{N}$, $U_i(v)$ is uniformly distributed on $[0,1]$. We define $\mathcal{X}:=\bigcup_{v\in V}\big(\big\{(v,0)\}\cup\{(v,T_i(v))|i\in\mathbb{N}\big\}\big)$, and define an oriented graph $(\mathcal{X},\mathcal{E})$ by putting an oriented edge $(u,T)\to(v,T')$ whenever $v\in\mathcal{N}_u^+$ and $T<T'$. Now we make rigorous the notion of \emph{affecting} that we discussed in the introduction of this section.
\begin{definition}
For $w,v\in V$, we say $w$ \emph{affects} $v$ \emph{before time} $t>0$ if there exists a directed path in $(\mathcal{X},\mathcal{E})$\label{nom:etvw} from $(w,0)$ to $(v,T)$ with $T\leq t$. $E_t(w,v)$ denotes the event that $w$ affects $v$ before time $t$.
\end{definition}

\begin{definition}
\label{a:good_graphical}
\emph{(Graphical Construction)} We call the oriented graph $(\mathcal{X},\mathcal{E})$ (or simply, $\mathcal{X}$) the {\em graphical construction on} $\mathcal{V}$ {\em w.r.t. the jump rate kernels} $(\alpha_v^*)_{v\in V}$. We say that the graphical construction {\em does not percolate}\index{graphical construction!good} if for every $v\in V$ and $t>0$, $v$ affects and is affected by only finitely many $w\in V$ before time $t$, almost surely.
\end{definition}

\begin{remark}\label{rem:sugc}
\hfill
\begin{enumerate}
    \item The rationale behind taking $2$-neighbours while putting oriented edges in $\mathcal{X}$ is that we allow {updates} at a given site to affect the state of their neighbouring sites, and not only their own states. This means, whenever there is a tick at any site $v$, it may potentially update the state of the entire neighbourhood of $v$, and therefore, has an effect on the $2$-neighbours of $v$ at any of their subsequent ticks. It immediately follows that in the case of self-updating systems, for the graphical construction, it suffices to put an oriented edge $(u,T)\to(v,T')$ whenever $v\in\mathcal{N}_u$ and $T<T'$. We call this {the} \emph{$1$-step graphical construction} on $\mathcal{V}$ w.r.t. $(\alpha_v^*)_{v\in V}$. 
    \item We note that in \cite{Penrose2008existence}, all the Poisson processes are i.i.d. with rate $c_{\max}$, where $c_{\max}:=\sup_{v\in V}c_v<\infty$. But in our case $c_{\max}$ may not be finite. This leads us refine Penrose's graphical construction by setting, for each $v\in V$, the rate of the Poisson process $\mathcal{P}_v$ as $c_v$. 
\end{enumerate}
\end{remark}

We will now state Propositions \ref{prop:exstcggc} and \ref{prop:ggcexstc}. We will prove them in the upcoming subsections. We first recall $\Theta_{v}$ and $\Theta_{v}^*$ from Definition \ref{def:djrt}.
\begin{prop}
\label{prop:exstcggc}
Let $\mathcal{V}=(V,E)$ be a locally finite graph and $\mathbb{Y}$ be a Polish space. Let $(\alpha_v^*)_{v\in V}$ be a family of jump rate kernels on $\mathcal{V}$. We assume one of the following holds:
\begin{enumerate}
    \item[(i)]  for every $v\in V$, $\Theta_{v}^*<\infty$,
    \item[(ii)]  for every $v\in V$, the jump rate kernels are are self-updating as in \eqref{con:self_upd} and $\Theta_{v}<\infty$.
\end{enumerate}
Then the graphical construction (resp., $1$-step graphical construction) on  $\mathcal{V}$ w.r.t. $(\alpha_v^*)_{v\in V}$ does not percolate.
\end{prop}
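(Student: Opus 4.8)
The plan is to show that for each $v \in V$ and each $t > 0$, the set of vertices that affect or are affected by $v$ before time $t$ is $\mathbb{P}$-almost surely finite, by a first-moment (Borel--Cantelli-type) argument on directed paths in the graphical construction. I will first reduce the analysis of directed paths in $(\mathcal{X},\mathcal{E})$ to a combinatorial statement about self-avoiding walks in $\mathcal{V}^+$. A directed path in $\mathcal{X}$ from $(w,0)$ to $(v,T)$ with $T \le t$ passes through a sequence of sites $w = u_0, u_1, \dots, u_\ell = v$ with $u_{i+1} \in \mathcal{N}_{u_i}^+$, and with strictly increasing time-coordinates drawn from the Poisson processes $\mathcal{P}_{u_i}$. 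Collapsing repeated sites (a directed path may return to a site only via a strictly later time, but for the purpose of bounding which vertices are reachable we may extract the subsequence of first visits, or better, bound the probability that \emph{some} long directed path exists), the key estimate is: the expected number of directed paths of ``site-length'' $n$ emanating from $v$ and realized within time $t$ is dominated by
\begin{equation*}
\sum_{\substack{\gamma = v, v_1, \dots, v_n \\ \gamma \in \mathrm{SAW}_{\mathcal{V}^+, n}(v)}} \frac{t^n}{n!}\prod_{i=1}^{n} c_{v_i},
\end{equation*}
or a variant thereof, since a chain of ticks at prescribed sites with increasing times within $[0,t]$ has probability density governed by the product of Poisson intensities and the volume $t^n/n!$ of the time-ordered simplex. (One has to be slightly careful that the relevant walk in $\mathcal{V}^+$ is not an arbitrary SAW but a \emph{remnant} of a SAW in $\mathcal{V}$, because an oriented edge $(u,T) \to (v,T')$ with $v \in \mathcal{N}_u^+ \setminus \mathcal{N}_u$ is realized only when there is an intermediate common neighbour whose state gets updated; this is exactly why $\mathrm{SAW}^*_{\mathcal{V},n}(v)$ and the \emph{double} jump rate trail $\Theta^*_{\mathcal{V},v}$ appear rather than the plain connective constant of $\mathcal{V}^+$. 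For self-updating systems, edges stay within $\mathcal{N}_u$, so plain SAWs in $\mathcal{V}$ and $\Theta_{\mathcal{V},v}$ suffice, which handles case (ii) in parallel.)

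Granting this reduction, the next step is purely analytic. By Definition \ref{def:djrt}, the hypothesis $\Theta^*_{\mathcal{V},v} < \infty$ means $\limsup_{n} \big(\sum_{\gamma \in \mathrm{SAW}^*_{\mathcal{V},n}(v)} \prod_{i=1}^{n-1} c_{v_i}\big)^{1/(n-1)} =: \Theta < \infty$, so there is $n_0$ and a constant $A_v$ with $\sum_{\gamma \in \mathrm{SAW}^*_{\mathcal{V},n}(v)} \prod_{i=1}^{n-1} c_{v_i} \le A_v (\Theta+1)^{n-1}$ for all $n \ge n_0$. Since the $c_{v_i}$ in the path-count above range over $i = 1, \dots, n$ whereas the trail omits the last factor, I absorb the extra $c_{v_n}$ using local finiteness together with the fact that, along a remnant SAW, $v_n$ is a $2$-neighbour of $v_{n-1}$ and hence $c_{v_n}$ is controlled on a per-vertex basis (alternatively, one reindexes so the omitted vertex is harmless); this is a routine bookkeeping point. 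Then the expected number of site-length-$n$ directed paths from $v$ within time $t$ is at most $A_v (\Theta+1)^{n} t^n / n!$, whose sum over $n$ converges, and in fact the tail $\sum_{n \ge N}$ tends to $0$. By Markov's inequality the probability that a directed path of site-length $\ge N$ exists from $(v,0)$ within $[0,t]$ tends to $0$ as $N \to \infty$, hence is eventually $0$ up to a null set via Borel--Cantelli; consequently only finitely many site-sequences, hence finitely many vertices $w$, are affected by $v$ before time $t$, almost surely. The ``affected by'' direction (vertices $w$ that affect $v$) is symmetric: reversing the orientation of $\mathcal{X}$ and using that $w \in \mathcal{N}_v^+ \iff v \in \mathcal{N}_w^+$, the same bound applies with the roles of endpoints swapped, and one checks the product of intensities is the same up to the endpoint adjustment already handled. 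Taking a countable union over $v \in V$ and over $t \in \mathbb{N}$ (monotonicity in $t$ handles all positive reals) yields non-percolation as in Definition \ref{a:good_graphical}.

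The main obstacle I anticipate is making the reduction in the first paragraph fully rigorous, specifically two intertwined subtleties: (a) correctly identifying which walks in $\mathcal{V}^+$ can carry a directed path --- establishing that every directed path of $\mathcal{X}$ projects to a \emph{remnant} of a SAW in $\mathcal{V}$ (so that the count is over $\mathrm{SAW}^*_{\mathcal{V},n}(v)$ and the weights match $\Theta^*_{\mathcal{V},v}$ exactly), which requires unwinding the definition of the oriented edge set in terms of the $2$-neighbourhood and tracking how an update at a common neighbour propagates; and (b) handling the fact that directed paths in $\mathcal{X}$ need not project to self-avoiding walks --- a site may be revisited at later times --- so one must argue that it suffices to bound self-avoiding projections (e.g.\ by extracting a loop-erased subpath, or by noting that if $v$ affects $w$ then it does so along a path whose site-projection, after loop-erasure, is a SAW, and loop-erasure only shortens the path and keeps endpoints). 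Once these two points are settled, the probabilistic estimate via the Poisson time-simplex volume $t^n/n!$ and the convergence of $\sum_n A_v(\Theta+1)^n t^n/n!$ is straightforward, and the self-updating case is a verbatim simplification replacing $\mathcal{V}^+$ by $\mathcal{V}$ and $\Theta^*$ by $\Theta$.
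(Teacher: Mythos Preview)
Your probabilistic estimate via the time-ordered simplex bound $t^n/n!\prod_i c_{v_i}$ is correct and essentially equivalent to the paper's Chernoff-bound computation in Lemma~\ref{lem:tail}; either yields summable tails and hence almost-sure finiteness of the affected set. The overall architecture --- bound the expected number of long directed paths, conclude by Markov's inequality --- matches the paper.

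However, there is a genuine gap in your reduction to $\mathrm{SAW}^*_{\mathcal{V}}(v)$. Your claim that ``an oriented edge $(u,T)\to(v,T')$ with $v\in\mathcal{N}_u^+\setminus\mathcal{N}_u$ is realized only when there is an intermediate common neighbour whose state gets updated'' is \emph{false}: in the graphical construction $(\mathcal{X},\mathcal{E})$, the edge exists whenever $v\in\mathcal{N}_u^+$ and $T<T'$, with no requirement that any intermediate clock ticks. So the site-projection of a directed path is an arbitrary walk in $\mathcal{V}^+$, and after loop-erasure you get an arbitrary SAW in $\mathcal{V}^+$ --- \emph{not} automatically an element of $\mathrm{SAW}^*_{\mathcal{V}}(v)$. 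These sets genuinely differ: one can build a small graph (e.g.\ five vertices $a,b,c,d,e$ with $a\sim c$, $b\sim c$, $b\sim e$, $d\sim e$) in which $a,b,c,e,d$ is a SAW in $\mathcal{V}^+$ but is not a remnant of any $\mathcal{V}$-SAW. Since the hypothesis $\Theta^*_{\mathcal{V},v}<\infty$ only controls sums over $\mathrm{SAW}^*_{\mathcal{V},n}(v)$, your first-moment bound as written is not controlled by the assumption. The paper fills this gap with a separate combinatorial lemma (Lemma~\ref{lem:path2saw}): every path in $\mathcal{V}^+$ admits a \emph{subpath} (same endpoints, subsequence of vertices) that lies in $\mathrm{SAW}^*_{\mathcal{V}}$. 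The proof is an induction on path length with a nontrivial case analysis when the inserted common neighbour collides with a vertex already on the underlying $\mathcal{V}$-SAW. Loop-erasure alone does not deliver this.

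A secondary issue: your proposal to ``absorb the extra $c_{v_n}$ using local finiteness'' does not work when the $c_v$'s are unbounded, which is precisely the regime of interest. The paper instead compares the two trail quantities directly, observing $\big(\overline{\Theta}^*_{\mathcal{V},v,(n)}\big)^n\le\big(\Theta^*_{\mathcal{V},v,(n+1)}\big)^n$ so that finiteness of $\Theta^*_{\mathcal{V},v}$ controls the product $\prod_{i=1}^n c_{v_i}$ needed for the ``$v$ affects $w$'' direction, while the definition of $\Theta^*_{\mathcal{V},v}$ (omitting the terminal vertex) directly handles the ``$w$ affects $v$'' direction where the final click is at $v$ itself.
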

\begin{prop}
\label{prop:ggcexstc}
Let $\mathcal{V}=(V,E)$ be a locally finite graph, $\mathbb{Y}$ be a Polish space, and $(\alpha_v^*)_{v\in V}$ be a family of jump rate kernels on $\mathcal{V}$ satisfying \eqref{eq:jrknbd} (resp., \eqref{con:self_upd} in addition to \eqref{eq:jrknbd}), and $G:=G_V$ be the operator defined at \eqref{eq:gendef}. Let the graphical construction (resp., $1$-step graphical construction) on $\mathcal{V}$ w.r.t. $(\alpha_v^*)_{v\in V}$ not percolate. Then there exists a Markovian family of transition distributions $(\mu_t)_{t\geq0}$ on $\bBY^V$, such that the associated semigroup $(P_t)_{t\geq0}$ has $G$ as the generator on $\mathcal{C}$. Moreover, for any sequence $(W_m)_{m\geq1}$ of finite subsets of $V$ {such that $\liminf_{m\to\infty}W_m=V$}, and any $t\geq0$, \eqref{eq:convsg} holds. There also exists a filtration $(\mathcal{G}_t)_{t\geq0}$ and a Markov family of processes $(\xi^x)_{x\in \bBY^V}$, adapted to $(\mathcal{G}_t)_{t\geq0}$,with transition semigroup $(P_t)_{t\geq0}$.
\end{prop}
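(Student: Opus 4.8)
The plan is to follow the Poisson-graphical route of Penrose \cite{Penrose2008existence}, adapted to the site-dependent intensities $(c_v)_{v\in V}$; the non-percolation hypothesis enters exactly once, to guarantee that the state at a site at a given time is determined by only finitely many atoms of the driving Poisson processes, and everything else is bookkeeping. \emph{Finite windows.} Since $\mathbb{Y}$ is Polish, for each $v\in V$ fix a measurable \emph{update map} $\psi_v\colon\mathbb{Y}^{\mathcal{N}_v}\times[0,1]\to\mathbb{Y}^{\mathcal{N}_v}$ realising the thinned kernel: for $U$ uniform on $[0,1]$ one has $\psi_v(x,U)=x$ on $\{U>\alpha_v^*(x)/c_v\}$ and $\psi_v(x,U)$ distributed as $\alpha_v^*(x,\cdot)/\alpha_v^*(x)$ on its complement (existence is a standard measurable-selection fact for probability kernels on a Polish space; in the self-updating case $\psi_v$ changes only the $v$-coordinate). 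For finite $W<V$ and $x\in\mathbb{Y}^V$ let $\xi^{W,x}$ be the process obtained by applying, at each atom $(T_i(v),U_i(v))$ of $\mathcal{P}_v$ with $v\in W$, the map $\psi_v(\,\cdot\,,U_i(v))$ to the current configuration on $\mathcal{N}_v$ and leaving all other coordinates unchanged. As $\mathcal{N}_W$ is finite and $\sum_{v\in W}c_v<\infty$, only finitely many atoms act on any bounded time interval, so $\xi^{W,x}$ is a well-defined c\`adl\`ag pure-jump Markov process; unwinding the definitions and using \eqref{eq:gendef} identifies its generator with $G_W$ and its transition semigroup with $(P^W_t)_{t\ge0}$.

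\emph{Passage to the infinite volume.} The oriented graph $(\mathcal{X},\mathcal{E})$, and hence the relation ``$w$ affects $v$ before time $t$'', is a function of the atom \emph{times} $(T_i(v))_{v,i}$ alone, independent of the marks $U_i(v)$ and of the initial configuration. By the non-percolation hypothesis together with local finiteness of $\mathcal{V}$, there is $\mathbb{P}$-almost surely, for every $v$ and every $t$, a finite set $\hat C_t(v)\subset V$ (containing $\mathcal{N}_v$ and all $w$ affecting $v$ before $t$) such that $\xi^{W,x}_s(v)$ for $s\le t$ depends on (initial configuration, Poisson data) only through their restriction to $\hat C_t(v)$, for every finite $W\supseteq\hat C_t(v)$; in particular this value is independent of such $W$. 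Define $\xi^x_t(v)$ to be that common value on the full-measure event, and arbitrarily off it. Consistency in $t$ is immediate from $\hat C_{t'}(v)\subseteq\hat C_t(v)$ for $t'\le t$, joint measurability in $(\omega,x)$ and c\`adl\`ag paths are inherited from the finite windows, and one obtains random maps $\Phi_{s,t}\colon\mathbb{Y}^V\to\mathbb{Y}^V$ with $\xi^x_t=\Phi_{0,t}(x)$, $\Phi_{0,0}=\mathrm{id}$ and $\Phi_{s,t}\circ\Phi_{r,s}=\Phi_{r,t}$, each $\Phi_{s,t}$ being a measurable function of the Poisson atoms with time coordinate in $(s,t]$.

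\emph{Transition distributions, generator, convergence.} Set $\mu_t(x,\cdot):=\mathrm{Law}(\xi^x_t)$ and $P_tf(x):=\mathbb{E}[f(\xi^x_t)]$; then $\mu_0(x,\cdot)=\delta_x$, and measurability of $x\mapsto\mu_t(x,A)$ comes from joint measurability of $(\omega,x)\mapsto\xi^x_t$. Let $(\mathcal{G}_t)_{t\ge0}$ be the $\mathbb{P}$-completed natural filtration of $(\mathcal{P}_v)_{v\in V}$, so each $\xi^x$ is adapted. Writing $\xi^x_{s+t}=\Phi_{s,s+t}(\xi^x_s)$ and noting that $\Phi_{s,s+t}$ is independent of $\mathcal{G}_s$ and, by stationarity of the Poisson processes under time-shift by $s$, equal in law to $\Phi_{0,t}$, we get $\mathbb{P}(\xi^x_{s+t}\in A\mid\mathcal{G}_s)=\mu_t(\xi^x_s,A)$; integrating yields the Chapman--Kolmogorov relation for $(\mu_t)$ and the Markov property of $(\xi^x)_{x\in\mathbb{Y}^V}$. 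For the generator, fix $f\in\mathcal{C}(A)$ with $A<V$: only atoms at sites in the finite set $\mathcal{N}_A$ can alter $f(\xi^x)$; the chance of two or more atoms within the finite set $\bigcup_{v\in\mathcal{N}_A}\mathcal{N}_v^+$ during $[0,t]$ is $O(t^2)$ uniformly in $x$; and a single atom at $v\in\mathcal{N}_A$ with no other atom in $\bigcup_{v'\in\mathcal{N}_A}\mathcal{N}_{v'}^+$ occurs with probability $c_v t+O(t^2)$ and, on that event, changes $f$ in conditional expectation by $\tfrac1{c_v}\int_{\mathbb{Y}^{\mathcal{N}_v}}(f(x|v|y)-f(x))\,\alpha_v^*(x|_{\mathcal{N}_v},dy)=\tfrac1{c_v}G_vf(x)$, contributing $tG_vf(x)+O(t^2)$. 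Summing over $v\in\mathcal{N}_A$, using $G_vf\equiv0$ for $v\notin\mathcal{N}_A$ by \eqref{eq:gen0} and the bound \eqref{eq:genbd}, gives $\|P_tf-f-tGf\|=O(t^2)$, i.e.\ $Gf=\lim_{t\to0}(P_tf-f)/t$ on $\mathcal{C}$. Finally, if $\liminf_m W_m=V$ then for fixed $v$ and $t$ one has $W_m\supseteq\hat C_t(v)$ for all large $m$, $\mathbb{P}$-a.s., hence $\xi^{W_m,x}_t(v)=\xi^x_t(v)$ eventually; for $f\in\mathcal{C}$ this gives $f(\xi^{W_m,x}_t)\to f(\xi^x_t)$ a.s., and bounded convergence yields \eqref{eq:convsg}. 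The self-updating / $1$-step case runs verbatim with $\mathcal{N}_v^+$ replaced by $\mathcal{N}_v$ throughout.

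\emph{Main difficulty.} The substantive part is the infinite-volume passage: showing, with due care for null sets and for joint measurability, that the site-wise values are consistent across all sufficiently large windows, that the induced maps $\Phi_{s,t}$ genuinely satisfy the flow identity, and that the restart identity $\xi^x_{s+t}=\Phi_{s,s+t}(\xi^x_s)$ may legitimately be evaluated at the random configuration $\xi^x_s$ (using that the percolation structure is a function of the Poisson geometry only); this, together with the Polish-space realisation $\psi_v$ of the kernels, is where the work lies, while the generator expansion and the finite-window convergence are then routine.
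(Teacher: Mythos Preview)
Your route is essentially the paper's: the same site-dependent Poisson clocks at rates $c_v$, the same thinned update maps $\psi_v$, the same one-jump/two-jump expansion over the finite set $\mathcal{N}_A^+$ to identify the generator, and the same cluster argument for \eqref{eq:convsg}. The only organisational difference is that you build the finite-window processes $\xi^{W,x}$ first and define $\xi^x$ as their stable limit, whereas the paper constructs $\xi^x$ directly via an inductive ``generation'' partition $(\mathfrak{G}_k)_{k\ge0}$ of $\mathcal{X}$ (Lemma~\ref{lem:partition}) and only afterwards introduces truncations; both orderings yield the same process.

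One correction: your description of $\hat C_t(v)$ as ``$\mathcal{N}_v$ together with all $w$ affecting $v$ before $t$'' is too small. Suppose there is an atom at $u\in\mathcal{N}_v\setminus\{v\}$ at time $T\le t$ but no atom at $v$ itself in $[0,t]$; then $\xi_t^x(v)$ depends on $\xi_{T-}^x|_{\mathcal{N}_u}$, hence on $x(w)$ for every $w\in\mathcal{N}_u$, yet such a $w\notin\mathcal{N}_v$ need not affect $v$ in the sense of Definition~\ref{a:good_graphical} (no atom at $v$ means no directed path in $(\mathcal{X},\mathcal{E})$ ends at any $(v,T')$ with $0<T'\le t$). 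The correct set is the paper's $C_{v,t}:=\{w\in V:E_t(w,z)\text{ for some }z\in\mathcal{N}_v^+\}$, which is still almost surely finite by non-percolation and local finiteness; with this replacement your argument goes through.
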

The proof of Proposition \ref{prop:exstcggc} is based on the proof of \cite[Lemma $5.1$]{penrose2002monotone} with some non-trivial changes made in order to adapt the proof to our assumptions. This is because, the proof relies on controlling the bounds on $\mathbb{P}\big(E_t(v,w)\big)$ for a suitable $t>0$ and for any $v,w\in V$, and for the general case, this entails looking at arbitrary paths in $\mathcal{V}^2$, which can make computations more involved. The changes we make effectively reduce this to considering only the SAWs in $\mathcal{V}$ and not paths in $\mathcal{V}^2$.

We give a constructive proof of Proposition \ref{prop:ggcexstc}, i.e., we explicitly construct a family of Markov processes on the graph using the jump rate kernels. The proof is largely adapted from \cite[Section $6$]{Penrose2008existence}. However, since we do not necessarily have a uniform bound on the jump rates, our graphical construction is slightly more refined than that in \cite{Penrose2008existence} (see Item $(2)$ of Remark \ref{rem:sugc}) and this necessitates some subtle modifications in the proof which will be remarked throughout the proof as and when needed. These changes, however, are simply technical, and less conceptual than those associated with Proposition \ref{prop:exstcggc}. We give detailed proofs of the results involved, for self-containment. 
\subsection{Proof of Proposition \ref{prop:exstcggc}}\label{subs:exstcggc} We prove Proposition \ref{prop:exstcggc} for the general case, i.e., under its Assumption $(i)$, and remark the changes that need to be made in the proof for the self-updating case. As already mentioned, the proof relies on controlling $\mathbb{P}\big(E_t(v,w)\big)$ for a suitable $t>0$ and for any $v,w\in V$, and for the general case, this can make computations more involved. Therefore, we first try to control the probability of the following more tractable event in Lemma \ref{lem:tail}, which will be crucial in proving Proposition \ref{prop:exstcggc}.
\begin{definition}
For $w,v\in V$, we say $w$ \emph{directly affects} $v$ \emph{before time} $t$ if there exists a directed path $(w,0),(v_1,t_1),(v_2,t_2),\cdots,(v,T)$ in $(\mathcal{X},\mathcal{E})$, such that $T\leq t$, and $\gamma=w,v_1,v_2,\cdots,v\in\text{SAW}_{\mathcal{V}}^*(v)$, i.e., $\gamma$ is a SAW in $\mathcal{V}^2$ and is the remnant of a SAW in $\mathcal{V}$. $E_t'(w,v)$\label{nom:etvw'} denotes this event.
\end{definition}

\begin{lemma}\label{lem:tail}
Let $\mathcal{V}=(V,E)$ be a locally finite graph, $\mathbb{Y}$ be a Polish space, and $(\alpha_v^*)_{v\in V}$ be a family of jump rate kernels on $\mathcal{V}$. Let $0<a<1$. We assume for every $v\in V$, $\Theta_{v}^*<\infty$. Let $\mathcal{X}$ be the graphical construction  on $\mathcal{V}$ w.r.t. $(\alpha_v^*)_{v\in V}$. Then for every $v\in V$ there exist $\delta_v>0$, $\delta_v'>0$, $N_v\in\mathbb{N}$, and $N_v'\in\mathbb{N}$ such that for all $n\geq N_v$ and all $n'\geq N_v'$,
\begin{align}\label{eq:affect}
\mathbb{P}\bigg(\bigcup_{w\in V:\text{dist}_{\mathcal{V}^2}(w,v)\geq n} E_{\delta n}'(v,w)\bigg)\leq(1-a)^{-1}{a^n},\;\text{and}\;\;\mathbb{P}\bigg(\bigcup_{w\in V:\text{dist}_{\mathcal{V}^2}(w,v)\geq n'} E_{\delta'n'}'(w,v)\bigg)\leq c_v(1-a)^{-1}{a^{n'}}.
\end{align}
\end{lemma}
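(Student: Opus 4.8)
The plan is to estimate $\mathbb{P}\big(E_t'(v,w)\big)$ for a fixed pair $v,w$ by summing, over all directed paths in $(\mathcal{X},\mathcal{E})$ that could realize the event, the probability that each such path is present, and then to sum these bounds over all $w$ at $\mathcal{V}^+$-distance $\geq n$ from $v$. The key point is that the event $E_t'(v,w)$ requires a directed space-time path whose underlying vertex sequence $\gamma = v,v_1,\dots,v_k = w$ lies in $\text{SAW}_{\mathcal{V}}^*(v)$; so only the SAWs in $\mathcal{V}^+$ that are remnants of SAWs in $\mathcal{V}$ enter, which is exactly the class weighted by $\Theta_{\mathcal{V},v,(n)}^*$. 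For a fixed such $\gamma$ of length $k$, I would bound the probability that there is an increasing sequence of Poisson times $0 < t_1 < t_2 < \cdots < t_k \leq t$ with $(v_{i-1}, t_{i-1}) \to (v_i, t_i)$ an edge. Since each $\mathcal{P}_{v_i}$ has intensity $c_{v_i}$, the chance of having at least one arrival of $\mathcal{P}_{v_i}$ in a time window and arranging them in increasing order contributes a factor roughly $\prod_{i=1}^{k-1} c_{v_i} \cdot \frac{t^{k-1}}{(k-1)!}$ (the endpoint $v_k = w$ does not need a "useful" tick to be affected, so it does not contribute a $c$-factor, and $v_0 = v$ starts at time $0$; this matches the product $\prod_{i=1}^{n-1} c_{v_i}$ in \eqref{eq:ndjrt}). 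This is the standard "first $k-1$ points of a Poisson process of rate $c$ land before $t$" estimate, giving $\frac{(c t)^{k-1}}{(k-1)!}$ after accounting for ordering.

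Next I would sum over $\gamma$. Summing $\prod_{i=1}^{k-1} c_{v_i}$ over all $\gamma \in \text{SAW}_{\mathcal{V},k}^*(v)$ gives $\big(\Theta_{\mathcal{V},v,(k)}^*\big)^{k-1}$ by definition \eqref{eq:ndjrt}. By the hypothesis $\Theta_{\mathcal{V},v}^* = \limsup_k \Theta_{\mathcal{V},v,(k)}^* < \infty$, there are $R_v > 0$ and $N_v \in \mathbb{N}$ with $\Theta_{\mathcal{V},v,(k)}^* \leq R_v$ for all $k \geq N_v$ (enlarging $R_v$ to absorb the finitely many early terms, using local finiteness of $\mathcal{V}^+$ so each $\Theta_{\mathcal{V},v,(k)}^*$ is finite). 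Hence
\[
\mathbb{P}\Big(\bigcup_{w:\,\text{dist}_{\mathcal{V}^+}(w,v)\geq n} E_t'(v,w)\Big) \;\leq\; \sum_{k\geq n} \sum_{\gamma\in\text{SAW}_{\mathcal{V},k}^*(v)} \frac{t^{k-1}}{(k-1)!}\prod_{i=1}^{k-1} c_{v_i} \;\leq\; \sum_{k\geq n} \frac{(R_v t)^{k-1}}{(k-1)!},
\]
using that any $w$ with $\text{dist}_{\mathcal{V}^+}(w,v) \geq n$ is reached only by SAW-remnants of length $\geq n$. Now choose $\delta = \delta_v$ small enough that $2 e R_v \delta \leq 1$ (say); then for $t = \delta n$ one has $R_v t = R_v \delta n$, and the tail $\sum_{k \geq n} \frac{(R_v\delta n)^{k-1}}{(k-1)!}$ is dominated — via Stirling, since $(k-1)! \geq ((n-1)/e)^{n-1}$ for $k \geq n$ and the series is eventually geometric with ratio $R_v \delta n / k \leq R_v\delta$ — by $C(R_v\delta e)^{\,n-1}$ for a constant $C$, which is $\leq 2^{-n}$ for all $n \geq N_v$ after possibly enlarging $N_v$ and shrinking $\delta_v$. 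This gives the first inequality.

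For the second inequality, the event is $E_{\delta' n'}'(w,v)$: now $w$ is the \emph{start} and $v$ the \emph{end}. The same path-counting applies, but the weight of a SAW $\gamma = w, v_1, \dots, v_{k-1}, v$ is $\prod_{i=1}^{k-1} c_{\cdot}$ over its first $k-1$ vertices, i.e. over $w, v_1, \dots, v_{k-2}$ — so now $c_w$ appears and $c_v$ does not. Reindexing $\gamma$ as a SAW from $v$ (reversing it — note $\text{SAW}_{\mathcal{V}}^*$ is symmetric under reversal since it is defined via remnants of SAWs in $\mathcal{V}$, which reverse), the sum over all starting points $w$ at distance $\geq n'$ and all such paths of $\prod$ of the relevant $c$'s equals a sum that differs from $\big(\Theta_{\mathcal{V},v,(k)}^*\big)^{k-1}$ only by swapping the $c_v$ factor (absent) for a $c_w$ factor (present) at the far end; bounding the single extra endpoint factor crudely and pulling out one power, one gets an overall extra multiplicative $c_v$ (the start-vertex weight, since in the reversed indexing $v$ is where the path originates and, as the originating vertex of a would-be directed path into $v$'s own Poisson structure... ) — more cleanly: the reversed sum is $\le c_v \cdot \big(\Theta_{\mathcal{V},v,(k)}^*\big)^{k-1}/(\text{something} \ge 1)$, and after the same $\delta'_v, N'_v$ choice the tail is $\le c_v 2^{-n'}$.

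\emph{Main obstacle.} The delicate point is the combinatorial/analytic bookkeeping in the second inequality: correctly tracking which vertex contributes the $c$-weight when the roles of source and sink are reversed, so that exactly one factor of $c_v$ (and no more) is produced, and verifying that reversal of paths preserves membership in $\text{SAW}_{\mathcal{V}}^*$. The Poisson-ordering estimate $\frac{(ct)^{k-1}}{(k-1)!}$ and the Stirling tail bound are routine; the conceptual care is entirely in matching the weighted sum to $\Theta_{\mathcal{V},v,(k)}^*$ in both directions and in the uniform choice of $\delta_v, \delta'_v$ absorbing the $\limsup$.
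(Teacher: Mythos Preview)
Your overall strategy---bound $\mathbb{P}(T_\gamma\le t)$ pathwise, sum over $\gamma\in\text{SAW}_{\mathcal{V},k}^*(v)$, and control the tail in $k$---is sound and is essentially what the paper does, though the paper uses a Chernoff bound $\mathbb{P}(T_\gamma\le t)\le e^{\theta t}\mathbb{E}[e^{-\theta T_\gamma}]=e^{\theta t}\prod_i \frac{c_{v_i}}{c_{v_i}+\theta}\le e^{\theta t}\theta^{-k}\prod_i c_{v_i}$ rather than your $t^k/k!$ estimate plus Stirling. Both routes work.

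However, you have a concrete bookkeeping error that propagates. A directed path $(v,0),(v_1,t_1),\ldots,(v_k,t_k)$ in $(\mathcal{X},\mathcal{E})$ requires each $(v_i,t_i)$ to lie in $\mathcal{X}$, i.e.\ each $t_i$ must be an arrival of $\mathcal{P}_{v_i}$. Thus \emph{every} vertex $v_1,\ldots,v_k$ (including the endpoint $v_k=w$) needs a tick; only the source $v$ at time $0$ is free. The correct pathwise bound is therefore $\mathbb{P}(T_\gamma\le t)\le \frac{t^k}{k!}\prod_{i=1}^{k}c_{v_i}$, not $\frac{t^{k-1}}{(k-1)!}\prod_{i=1}^{k-1}c_{v_i}$. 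For the first inequality this means you are summing $\sum_\gamma\prod_{i=1}^k c_{v_i}$, which is \emph{not} $(\Theta^*_{\mathcal{V},v,(k)})^{k-1}$; the paper handles this by introducing $\overline{\Theta}^*_{\mathcal{V},v,(k)}:=\big(\sum_\gamma\prod_{i=1}^k c_{v_i}\big)^{1/k}$ and observing $\overline{\Theta}^*_{\mathcal{V},v}\le\Theta^*_{\mathcal{V},v}<\infty$. With that fix your Stirling argument goes through.

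The same error is what makes your second-inequality argument muddled. For $E'_{t}(w,v)$ the path runs $(w,0),(v_{k-1},t_1),\ldots,(v_1,t_{k-1}),(v,T)$: the free vertex is $w$ (time $0$), and the vertices needing ticks are $v_{k-1},\ldots,v_1,v$. So the weight is $c_v\prod_{i=1}^{k-1}c_{v_i}$---the factor $c_v$ appears \emph{directly} because $v$ is the terminal vertex, and $c_w$ does \emph{not} appear. You have this exactly reversed, which is why you end up hand-waving to manufacture a $c_v$. Once corrected, the sum over reversed SAWs (your reversal observation for $\text{SAW}^*_{\mathcal{V}}$ is fine) is precisely $c_v\big(\Theta^*_{\mathcal{V},v,(k)}\big)^{k-1}$, and the $c_v$ in the claimed bound drops out cleanly with no crude estimates needed.
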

\begin{proof}
As the first step of the proof, we denote,
\begin{equation}\label{eq:ndjrt'}
\overline{\Theta}_{v,(n)}^*:=\Bigg(\sum\limits_{\substack{\gamma:=v,v_1,\cdots,v_n \\ {\gamma\in\text{SAW}_{\mathcal{V},n}^*(v)}}}\prod\limits_{i=1}^nc_{v_i}\Bigg)^\frac{1}{n}\text{ and }\quad\overline{\Theta}_{v}^*:=\limsup\limits_{n\to\infty}\overline{\Theta}_{v,(n)}^*.
\end{equation}
We observe that, for each $n\in\mathbb{N}$, $\big(\overline{\Theta}_{v,(n)}^*\big)^n\leq(\Theta_{v,(n+1)}^*)^n$, and thus taking $\limsup$ on the $n$-th root of both sides, $\overline{\Theta}_{v}^*\leq\Theta_{v}^*$. Thus, for each $v\in V$, $\Theta_{v}^*<\infty$ implies that $\overline{\Theta}_{v}^*<\infty$.

We prove the first inequality of \eqref{eq:affect} first. We fix $\varepsilon>0$. Since $\overline{\Theta}_{v}^*<\infty$ holds for every $v\in V$, there exists $N_{v,\varepsilon}\in\mathbb{N}$ for each $v\in V$, such that for all $m\geq N_{v,\varepsilon}$,
\begin{equation}\label{eq:threshold}
\big(\overline{\Theta}_{v,(m)}^*\big)^m:=\sum\limits_{\substack{\gamma:=v,v_1,\cdots,v_m \\ {\gamma\in\text{SAW}_{\mathcal{V},m}^*(v)}}}\prod\limits_{i=1}^mc_{v_i}\leq(\overline{\Theta}_{v}^*+\varepsilon)^m,
\end{equation}

In order to compute the LHS of the first inequality of \eqref{eq:affect}, we consider all possible paths $\widetilde{\gamma}:=(v,0),(v_1,T_1)$, $(v_2,T_2),\cdots,(v_{{m}-1},T_{{m}-1}),(w,T_m)$ in $\mathcal{P}$ such that $T_m\leq t$ and $\gamma:=v,v_1,v_2,\cdots,w$ is a SAW in $\mathcal{V}^2$ and a remnant of a SAW in $\mathcal{V}$. {However, it is indeed sufficient to look at a smaller set of paths, viz., all possible paths $\widetilde{\gamma}:=(v,0),(v_1,t_1)$, $(v_2,t_2),\cdots,(v_{{m}-1},t_{{m}-1}),(w,T)$ in $\mathcal{P}$ such that $T\leq t$, $\gamma:=v,v_1,v_2,\cdots,w\in\text{SAW}_{\mathcal{V},m}^*(v)$, and for each $1\leq k\leq n$, $t_k$ is the smallest real number greater than $t_{k-1}$ such that $(v_k,t_k)\in\mathcal{P}$. In other words, for each $1\leq k\leq n$, $t_k$ is the time of the first tick at $v_k$ after $t_{k-1}$. This is because if there is an admissible series of ticks within $[0,t]$ along an influencing chain of vertices, then the series of consecutive first ticks along the same chain will be within $[0,t]$.} We write $T=\sum_{i=1}^{{m}}W_{v_i}$ where $W_{v_i}=t_i-t_{i-1}$, for $1\leq i\leq n$ with $t_0:=0$ and $t_{{m}}:=T$. Thus for each $i$, $W_{v_i}$ represents the time upto the first tick at $v_i$ after $t_{i-1}$. From standard properties of homogeneous Poisson processes on the real half line, $W_{v_i}$'s are independent random variables, and for each $1\leq i\leq{m}$, $W_{v_i}$ is exponential with {rate} $c_{v_i}$. Since $T$ {uniquely} corresponds to the path $\gamma$ we write $T$ as $T_\gamma$. Hence for any $\delta>0$ and any $n\in\mathbb{N}$, we have,
\begin{align}
&\mathbb{P}\bigg(\bigcup_{w\in V:\text{dist}_{\mathcal{V}^2}(w,v)\geq n}E_{\delta n}'(v,w)\bigg)
\leq\sum_{m\geq n}\sum_{ \gamma\in\text{SAW}_{\mathcal{V},m}^*(v)}\mathbb{P}\big(T_\gamma\leq\delta n\big)\qquad\text{(by Boole's inequality)}\nonumber\\&
\leq\sum_{m\geq n}\sum_{ \gamma\in\text{SAW}_{\mathcal{V},m}^*(v)}e^{\theta\delta n}\mathbb{E}\big[e^{-\theta T_\gamma}\big]\qquad\text{(using Chernoff bound for any $\theta>0$ to be fixed later)}\nonumber\\&
=\sum_{m\geq n}\sum_{\substack{\gamma=v,v_1,\cdots,v_m \\ \gamma\in\text{SAW}_{\mathcal{V},m}^*(v)}}e^{\theta\delta n}\prod\limits_{i=1}^m\mathbb{E}\big[e^{-\theta W_{v_i}}\big]\;\text{($W_{v_i}$'s are independent)}\nonumber\\&
=\sum_{m\geq n}\sum_{\substack{\gamma=v,v_1,\cdots,v_m \\ \gamma\in\text{SAW}_{\mathcal{V},m}^*(v)}}e^{\theta\delta n}\prod\limits_{i=1}^m\frac{c_{v_i}}{c_{v_i}+\theta}\quad\text{($W_{v_i}$ is exponential with rate $c_{v_i}$)}\nonumber\\&
\leq e^{\theta\delta n}\sum_{m\geq n}\sum_{\substack{\gamma=v,v_1,\cdots,v_m \\ \gamma\in\text{SAW}_{\mathcal{V},m}^*(v)}}\prod\limits_{i=1}^m\frac{c_{v_i}}{\theta}
\leq e^{\theta\delta n}\sum_{m\geq n}\frac{1}{\theta^m}\sum_{\substack{\gamma=v,v_1,\cdots,v_m \\ \gamma\in\text{SAW}_{\mathcal{V},m}^*(v)}}\prod\limits_{i=1}^mc_{v_i}.\label{eq:ineqinpro}
\end{align}
For all $n\geq N_{v,\varepsilon}$, by \eqref{eq:threshold}, \eqref{eq:ineqinpro} implies,
\begin{equation}\label{eq:ineqpenul}
\mathbb{P}\bigg(\bigcup_{w\in V:\text{dist}_{\mathcal{V}^2}(w,v)\geq n}E_{\delta n}'(v,w)\bigg)\leq e^{\theta\delta n}\sum_{m\geq n}\frac{1}{\theta^m}(\overline{\Theta}_{v}^*+\varepsilon)^m
\end{equation}
Now we choose $\theta>0$ such that $\frac{\overline{\Theta}_{v}^*+\varepsilon}{\theta}<a$. Thus $\sum_{m\geq n}\frac{1}{\theta^m}(\overline{\Theta}_{v}^*+\varepsilon)^m<(1-a)^{-1}a^n$. Corresponding to this choice of $\theta$, we choose $\delta_{v,\varepsilon}>0$ such that $e^{\theta\delta_{v,\varepsilon}}<1$. Then, \eqref{eq:ineqpenul} implies, for all $n\geq N_{v,\varepsilon}$,
\begin{equation*}
\mathbb{P}\bigg(\bigcup_{w\in V:\text{dist}_{\mathcal{V}^2}(w,v)\geq n}E_{\delta_{v,\varepsilon}n}'(v,w)\bigg)\leq(1-a)^{-1}a^n.
\end{equation*}
Since the dependence on $\varepsilon$ is intermediary, we can write $N_v:=N_{v,\varepsilon}$ and $\delta_v:=\delta_{v,\varepsilon}$, and this proves the first inequality of \eqref{eq:affect}.

The proof of the second inequality of \eqref{eq:affect} is similar. We observe that to compute the LHS of the second inequality, we have to consider all possible paths $\widetilde{\gamma}':=(w,0),(v_{n-1},t_1)$, $(v_{n-2},t_2)\cdots,(v_1,t_{n-1})$, $(v,T)$ in $\mathcal{P}$ (the indexing is for notational convenience) such that $T\leq t$ and $\gamma:=v,v_1,v_2,\cdots,w$ is a SAW in $\mathcal{V}^2$ and the remnant of a SAW in $\mathcal{V}$. For any $\delta>0$ and any $n\in\mathbb{N}$, we use the same logic used in arriving at \eqref{eq:ineqinpro} to show,
\begin{equation}\label{eq:ineqinpro'}
\mathbb{P}\bigg(\bigcup_{w\in V:\text{dist}_{\mathcal{V}^2}(w,v)\geq n}E_{\delta n}'(w,v)\bigg)\leq e^{\theta\delta n}\sum_{m\geq n}\frac{1}{\theta^m}\sum_{\substack{\gamma=v,v_1,\cdots,v_m \\ \gamma\in\text{SAW}_{\mathcal{V},m}^*(v)}}c_v\prod\limits_{i=1}^{m-1}c_{v_i}.
\end{equation}
We fix $\varepsilon>0$. Since $\Theta_{v}^*<\infty$ holds for every $v\in V$, there exists $N_{v,\varepsilon}'\in\mathbb{N}$ for each $v\in V$, such that for all $m\geq N_{v,\varepsilon}'$,
\begin{equation}\label{eq:threshold2}
\big(\Theta_{v,(m)}^*\big)^{m-1}:=\sum\limits_{\substack{\gamma:=v,v_1,\cdots,v_m \\ {\gamma\in\text{SAW}_{\mathcal{V},m}^*(v)}}}\prod\limits_{i=1}^{m-1}c_{v_i}\leq(\Theta_{v}^*+\varepsilon)^{m-1}.
\end{equation}
For all $n\geq N_{v,\varepsilon}'$, by \eqref{eq:threshold2}, \eqref{eq:ineqinpro'} implies,
\begin{equation}\label{eq:ineqpenul2}
\mathbb{P}\bigg(\bigcup_{w\in V:\text{dist}_{\mathcal{V}^2}(w,v)\geq n}E_{\delta n}'(v,w)\bigg)\leq c_ve^{\theta\delta n}\sum_{m\geq n}\frac{1}{\theta^m}(\Theta_{v}^*+\varepsilon)^{m-1}
\end{equation}
As in the proof of the first inequality of \eqref{eq:affect}, we can find $\delta_v'>0$ such that for all $n\geq N_v':=N_{v,\varepsilon}'$,
\begin{equation*}
\mathbb{P}\bigg(\bigcup_{w\in V:\text{dist}_{\mathcal{V}^2}(w,v)\geq n}E_{\delta_vn}'(v,w)\bigg)\leq c_v(1-a)^{-1}a^n.
\end{equation*}
This proves the second inequality of \eqref{eq:affect}.
\end{proof}
Now we will use Lemma \ref{lem:tail} and the following lemma that asserts that  every path in $\mathcal{V}^2$ has a subpath in $\text{SAW}_{\mathcal{V}}^*$, to prove Proposition \ref{prop:exstcggc}, i.e., to show the graphical construction $\mathcal{X}$ does not percolate.

\begin{lemma}\label{lem:path2saw}
Let $\mathcal{V}$ be any locally finite graph. Let for any $n\in\mathbb{N}$, $\gamma=v,v_1,\cdots,v_n$ be a path in $\mathcal{V}^2$, {such that $v\neq v_n$}. Then there exists a subpath $\gamma'=v,v_{k_1},\cdots,v_{k_l}(=v_n)$ of $\gamma$, such that $\gamma'\in\text{SAW}_\mathcal{V}^*(v)$.
\end{lemma}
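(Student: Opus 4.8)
The plan is to prove the statement by induction on the length $n$, strengthening it slightly so that we also produce the witnessing walk in $\mathcal{V}$. Concretely, I would prove: for every path $\gamma=v_0,v_1,\dots,v_n$ in $\mathcal{V}^+$ (with $v_0=v$), there exist indices $0=k_0<k_1<\dots<k_l=n$ and a self-avoiding walk $W$ in $\mathcal{V}$ from $v_0$ to $v_n$ such that $\gamma':=v_{k_0},v_{k_1},\dots,v_{k_l}$ is a remnant of $W$. Since a remnant of a SAW in $\mathcal{V}$ is automatically a SAW in $\mathcal{V}^+$ (consecutive kept vertices are distinct and at $\mathcal{V}$-distance at most $2$, hence adjacent in $\mathcal{V}^+$), this gives $\gamma'\in\text{SAW}_\mathcal{V}^*(v)$, and $\gamma'$ is a subpath of $\gamma$ ending at $v_n$, as required. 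For $n=1$ the claim is immediate: if $v_0\sim_\mathcal{V} v_1$ take $W=v_0,v_1$ and $\gamma'=\gamma$; if $\text{dist}_\mathcal{V}(v_0,v_1)=2$, pick a common $\mathcal{V}$-neighbour $u$ (automatically $u\neq v_0,v_1$) and take $W=v_0,u,v_1$, of which $\gamma=v_0,v_1$ is a remnant.

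For the inductive step ($n\geq2$) I would apply the hypothesis to $\gamma_{n-1}=v_0,\dots,v_{n-1}$, obtaining indices $0=k_0<\dots<k_l=n-1$ and a witnessing SAW $W''=z_0,\dots,z_N$ in $\mathcal{V}$ ($z_0=v_0$, $z_N=v_{n-1}$) with skeleton positions $0=s_0<\dots<s_l=N$, $z_{s_j}=v_{k_j}$, and at most one inserted vertex strictly between consecutive skeleton positions. Since $v_{n-1}\sim_{\mathcal{V}^+}v_n$, fix a $\mathcal{V}$-walk $P$ from $v_{n-1}$ to $v_n$ of length $r\in\{1,2\}$ --- either $v_{n-1},v_n$, or $v_{n-1},u,v_n$ with $u\neq v_{n-1},v_n$ --- and form the $\mathcal{V}$-walk $\widehat W=z_0,\dots,z_N,P$ from $v_0$ to $v_n$. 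The crucial observation, and the only point where the hypothesis that $\gamma$ is a \emph{path} (distinct vertices) enters, is that $v_n\notin\{v_0,\dots,v_{n-1}\}$, hence $v_n$ is not a skeleton vertex of $W''$; so the only repetitions possible in $\widehat W$ are coincidences of one of the two ``new'' vertices $u,v_n$ with a single $z_i$ ($i<N$), and if $v_n=z_\alpha$ then $\alpha$ is forced to be an inserted position of $W''$, hence sandwiched between two skeleton positions with $v_{k_q}\sim_\mathcal{V} v_n\sim_\mathcal{V} v_{k_{q+1}}$.

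Using this, I would cut $\widehat W$ at the first index $b$ with $z_b\in\{u,v_n\}$ (if none exists, $\widehat W$ is already a SAW and witnesses the subpath $v_{k_0},\dots,v_{k_l},v_n$): set $W:=z_0,\dots,z_b$ when $z_b=v_n$, and $W:=z_0,\dots,z_b,v_n$ when $z_b=u$. Minimality of $b$ gives that $z_0,\dots,z_b$ are pairwise distinct and, in the second case, $v_n\notin\{z_0,\dots,z_b\}$ while $z_b=u\sim_\mathcal{V} v_n$; so $W$ is a SAW in $\mathcal{V}$ from $v_0$ to $v_n$. One then reads off a remnant of $W$: in every case (whether $z_b$ equals $v_n$ or $u$, and, in the latter case, whether $b$ was a skeleton or an inserted position of $W''$) the prefix $z_0,\dots,z_b$ ends one or two steps past some skeleton position $s_q$, so a remnant of $W$ can be taken to be $v_{k_0},\dots,v_{k_q},v_n$ for some $q\leq l$, using that $v_n$ is distinct from every $v_{k_i}$ and that the cut vertex is $\mathcal{V}$-adjacent to $v_n$. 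This $\gamma'$ is a subpath of $\gamma$ ending at $v_n$ and a remnant of $W$, completing the induction.

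The main obstacle is purely the bookkeeping in the inductive step: one must check that the pruned walk $W$ is genuinely self-avoiding and that its natural skeleton-plus-inserted decomposition still descends to a subpath of $\gamma$. What makes this go through is exactly the self-avoidingness of $\gamma$ in $\mathcal{V}^+$, which prevents the freshly attached vertex $v_n$ from coinciding with any earlier vertex of $\gamma$; consequently, whenever $v_n$ (or the connector $u$) already occurs in $W''$, the cut falls cleanly at an inserted position of $W''$ and the remnant structure is preserved. I would also note that in the application (the proof of Proposition~\ref{prop:exstcggc}) one may first shortcut any directed path in $\mathcal{X}$ so that its projection to $\mathcal{V}^+$ has distinct vertices, so that the form of the lemma proved here is all that is needed.
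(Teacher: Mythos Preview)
Your proof is correct and follows essentially the same inductive strategy as the paper's: apply the hypothesis to the length-$(n-1)$ prefix to get a witnessing SAW $W''$ in $\mathcal{V}$, append a one- or two-step $\mathcal{V}$-walk to reach $v_n$, and prune the concatenation at the first collision to recover a SAW in $\mathcal{V}$ whose remnant is the desired subpath of $\gamma$. The only organizational difference is that you assume the vertices of $\gamma$ are distinct and handle the application via a preliminary shortcutting in $\mathcal{X}$, whereas the paper treats the case $v_{m+1}\in\gamma'$ directly; your systematic ``cut at the first occurrence of $u$ or $v_n$'' is in fact a bit cleaner, since it also transparently covers the sub-case where $v_n$ coincides with an inserted vertex of $W''$ (a situation the paper's case split in the ``$v_m\sim v_{m+1}$'' branch glosses over).
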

\begin{proof}
We do an induction on $n$. For $n=1$, there are two possibilities. Either $v$ and $v_1$ are neighbours in $\mathcal{V}$, or they have a common neighbour $u$. In the first case $v,v_1$ forms a SAW in $\mathcal{V}$, and hence $\gamma'=v,v_1\in\text{SAW}_\mathcal{V}^*(v)$. In the second case $v,u,v_1$ forms a SAW in $\mathcal{V}$ and hence, again, $\gamma'=v,v_1\in\text{SAW}_\mathcal{V}^*(v)$.

Let, for some $m\in\mathbb{N}$ the condition hold for any path of length $m$ in $\mathcal{V}^2$. Now, let $\gamma=v,v_1,\cdots,v_{m+1}$ be a path of length $m+1$ in $\mathcal{V^+}$. Since the path $v,v_1,\cdots,v_m$ is of length $m$, the condition holds for it, and therefore, it admits a subpath $\gamma'=v,v_{k_1},\cdots,v_{k_l}(=v_m)$ which is a SAW in $\mathcal{V}^2$ and a remnant of a SAW $\gamma_1$ in $\mathcal{V}$.

Now, $v_{m+1}$ may or may not appear in $\gamma'$. In the first case, let $v_{m+1}=v_{k_i}$ for some $1\leq i<l$. Then $\gamma'':=v,v_{k_1},\cdots,v_{k_i}(=v_{m+1})$ is the required subpath of $\gamma$. In the second case, let $1\leq j\leq l$ be the smallest index such that $v_{k_j}$ and $v_{m+1}$ are neighbours in $\mathcal{V}^2$. Then either $v_{k_j}$ and $v_{m+1}$ are neighbours in $\mathcal{V}$, or they have a common neighbour in $\mathcal{V}$. In either cases $\gamma''$ obtained by appending $v_{m+1}$ to $v,v_{k_1},\cdots,v_{k_j}$ is the required subpath of $\gamma$. Indeed, let $\gamma_{1,k_j}$ denote the truncation of $\gamma_1$ till $v_{k_j}$. If $v_{k_j}$ and $v_{m+1}$ are neighbours in $\mathcal{V}$, then $\gamma''$ is a remnant of the SAW in $\mathcal{V}$ obtained by appending $v_{m+1}$ to $\gamma_{1,k_j}$. The latter is indeed a SAW as $v_{m+1}$ does not occur in $\gamma_{1,k_j}$, which in turn follows from the definition of $j$. On the other hand, if  $v_{k_j}$ and $v_{m+1}$ have a common neighbour $u_1$ in $\mathcal{V}$, then $\gamma''$ is a remnant of the SAW in $\mathcal{V}$ obtained by appending $u_1,v_{m+1}$ to $\gamma_{1,k_j}$. Again, the latter is indeed a SAW as $u_1,v_{m+1}$ does not occur in $\gamma_{1,k_j}$, which in turn follows from the definition of $j$.
\end{proof}

\begin{proof}[Proof of Proposition \ref{prop:exstcggc}.]

We will first prove the proposition under Assumption $(i)$ and then remark about the changes needed under Assumption $(ii)$.

For any given $v\in V$ and $t>0$, we claim that, almost surely, $v$ directly affects and is directly affected by only finitely many $w\in V$ before time $t$. Let $F_n^v$ be the event that there exists $w\in V$ with $\text{dist}_{\mathcal{V}^2}(w,v)\geq n$, such that $v$ directly affects $w$ before time $t$, i.e., $F_{n,t}^v:=\big\{\cup_{w\in V:\text{dist}_{\mathcal{V}^2}(w,v)\geq n}E_t'(v,w)\big\}$. Then, $F_{n,t}^v$ is a non-increasing sequence of events in $n$, and $\sum_{n\in\mathbb{N}}\mathbb{P}(F_{n}^v)<\infty$ by Lemma \ref{lem:tail}. Also, for any $n\in\mathbb{N}$, by the local finiteness of $\mathcal{V}$, there exist finitely many $w$'s such that $\text{dist}_{\mathcal{V}^2}(w,v)<n$. Thus, using the Borel-Cantelli Lemma, we conclude that almost surely, $v$ directly affects only finitely many vertices before time $t$. The other claim can be proved using similar arguments.

Now we show that $\mathcal{X}$ does not percolate. Let for any $v\in V$ and $t>0$, $\text{Aff}_{v,t}$ and $\overline{\text{Aff}}_{v,t}$ be respectively the set of vertices that are affected and that are directly affected by $v$ in time $t$. Then for each $w\in\text{Aff}_{v,t}$, there exists a path $(v,0),(v_1,t_1),\cdots,(v_{n-1},t_{n-1})$, $(w,T)$ in $(\mathcal{X},\mathcal{E})$, such that $T\leq t$. By Lemma \ref{lem:path2saw}, there exists a subpath $\gamma'=v,v_{k_1},\cdots,v_{k_l}(=w$) of $\gamma=v,v_1,\cdots,w$, for some $l\leq|\gamma|$, which is a SAW in $\mathcal{V}^2$ and a remnant of a SAW in $\mathcal{V}$. Obviously $t_{k_1}<t_{k_2}<\cdots<t_{k_l}=T$, and thus $(v,0),(v_{k_1},t_{k-1}),\cdots,(v_{k_{l-1}},t_{k_{l-1}}),(w,T)$ is a path in $(\mathcal{X},\mathcal{E})$ such that $\gamma'=v,v_{k_1},\cdots,v_{k_l}\in\text{SAW}_{\mathcal{V},n}^*(v)$. Thus $w\in\overline{\text{Aff}}_{v,t}$. {This implies $\text{Aff}_{v,t}\subset\overline{\text{Aff}}_{v,t}$.} By the discussion in the previous paragraph, $\mathbb{P}\big(\big|\overline{\text{Aff}}_{v,t}\big|<\infty\big)=1$. Thus $\mathbb{P}\big(\big|\text{Aff}_{v,t}\big|<\infty\big)=1$, i.e., almost surely, the set of vertices affected by $v$ in time $t$ is finite. Similarly we can show that the set of vertices affecting $v$ in time $t$ is almost surely finite. This proves Proposition \ref{prop:exstcggc} under Assumption $(i)$ of the same.

We now remark about changes to the above proof under Assumption $(ii)$ i.e., for self-updating systems. For such systems, the graphical construction has to be as in Point $(1)$ of Remark \ref{rem:sugc}, and instead of $E_t'(v,w)$, one has to consider $E_t''(v,w)$ for $t>0$, where $E_t''(v,w)$ is the event that there exists a directed path $(v,0),(v_1,t_1),\cdots,(w,T)$ in $\mathcal{X}$ such that $T\leq t$ and $\gamma=v,v_1,\cdots,w\in\text{SAW}_\mathcal{V}(v)$. The rest of the proof is similar to that for general systems.
\end{proof}

\subsection{Proof of Proposition \ref{prop:ggcexstc}}\label{subs:ggcexstc}

Throughout the remainder of this subsection, we work in the premise of Proposition \ref{prop:ggcexstc}, and assume its conditions to hold.

\smallskip
\paragraph*{\emph{Step I: Construction of a partition on $\mathcal{X}$}}
We define a class of subsets $(\mathfrak{G}_k)_{k\geq0}$ of $\mathcal{X}$. For each $k\geq0$. We set $\mathfrak{G}_0:=\{(v,0)|v\in V\}$. For any $k\geq1$, we set,
\begin{equation*}
\mathfrak{G}_k=\big\{(v,T)\in\mathcal{X}|\;\text{the longest directed path from any }(u,0)\in\mathfrak{G}_0\text{ to }(v,T)\text{ is of length }k\big\}.
\end{equation*}

\begin{lemma}\label{lem:partition}
$(\mathfrak{G}_k)_{k\geq0}$ is almost surely a partition of $\mathcal{X}$.
\end{lemma}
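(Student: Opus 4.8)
The plan is to check the two properties defining a partition: that the $\mathfrak{G}_k$ are pairwise disjoint (this part is deterministic), and that $\mathbb{P}$-almost surely they cover $\mathcal{X}$.

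For disjointness, observe that for each $(v,T)\in\mathcal{X}$ the quantity $\ell(v,T):=$ ``length of the longest directed path in $(\mathcal{X},\mathcal{E})$ from a point of $\mathfrak{G}_0$ to $(v,T)$'' is a single well-defined element of $\mathbb{N}_0\cup\{\infty\}$ (set it to $-\infty$ if no such path exists). Since every edge of $\mathcal{E}$ strictly increases the time coordinate, the only directed path terminating at a time-$0$ vertex $(v,0)$ is the trivial one, so $\ell(v,0)=0$ and the defining formula for $\mathfrak{G}_k$ with $k\ge1$, read at $k=0$, reproduces $\mathfrak{G}_0$. Thus $(v,T)\in\mathfrak{G}_k$ if and only if $\ell(v,T)=k$, so the $\mathfrak{G}_k$ are automatically pairwise disjoint, and covering $\mathcal{X}$ amounts exactly to $0\le\ell(v,T)<\infty$ for every $(v,T)\in\mathcal{X}$.

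First I would check $\ell(v,T)\ge0$, i.e.\ the existence of at least one directed path from $\mathfrak{G}_0$ to $(v,T)$. If $T=0$ this is the trivial path; if $T>0$, then since $v\in\mathcal{N}_v\subseteq\mathcal{N}_v^+$ and $0<T$, the pair $(v,0)\to(v,T)$ is an edge of $\mathcal{E}$, giving a path of length $1$ (for the $1$-step construction the same works, as $v\in\mathcal{N}_v$). The hard part is finiteness. I would fix a realization in the almost-sure event on which the graphical construction does not percolate \emph{and} on which every $\mathcal{P}_w$ has only finitely many points in each bounded time window; being a countable intersection over $w\in V$ and rational times, this event is almost sure. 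Fix $(v,T)\in\mathcal{X}$ and set $A:=\{w\in V:w\text{ affects }v\text{ before time }T\}$; choosing any rational $t>T$ and invoking non-percolation, $A$ is finite. If $(u_j,s_j)$ lies on some directed path from $\mathfrak{G}_0$ to $(v,T)$, then prepending the edge $(u_j,0)\to(u_j,s_j)$ (valid since $u_j\in\mathcal{N}_{u_j}^+$ and $0<s_j$) produces a directed path from $(u_j,0)$ to $(v,T)$, so $u_j\in A$; hence every space-time vertex on any such path lies in $S:=\{(w,T')\in\mathcal{X}:w\in A,\ T'\le T\}$. Since $A$ is finite and each $\mathcal{P}_w$ has only finitely many points in $[0,T]$, the set $S$ is finite. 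Because time coordinates strictly increase along a directed path, its vertices are distinct, so every directed path from $\mathfrak{G}_0$ to $(v,T)$ has length at most $|S|-1$; therefore $\ell(v,T)\le|S|-1<\infty$.

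To conclude, the chosen event also forces each $\mathcal{P}_w$, and hence $\mathcal{X}$, to be countable, so ``$0\le\ell(v,T)<\infty$ for every $(v,T)\in\mathcal{X}$'' is a countable intersection of almost-sure events and thus holds almost surely; on that event $\mathcal{X}=\bigsqcup_{k\ge0}\mathfrak{G}_k$, which is the claim. I expect the finiteness of $\ell(v,T)$ to be the only genuine obstacle: non-percolation by itself controls only the \emph{set of vertices} that can appear on directed paths into $(v,T)$, and one must additionally use local finiteness of the Poisson processes to bound the number of relevant space-time points, while being careful that the time $T$ is random, so that non-percolation is applied at a rational time exceeding it.
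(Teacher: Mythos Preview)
Your proof is correct and follows the same approach as the paper: disjointness is immediate because $\ell(v,T)$ is single-valued, and covering reduces to showing $\ell(v,T)<\infty$ via non-percolation. Your argument is in fact more careful than the paper's, which simply notes that finitely many $(w,0)$ have a path to $(v,T)$ and then ``takes the maximum of the lengths'' without explaining why those lengths are bounded; you fill this in correctly by observing that every vertex on such a path lies in the finite set $S=\{(w,T')\in\mathcal{X}:w\in A,\ T'\le T\}$, using both non-percolation (to make $A$ finite) and local finiteness of the Poisson processes (to make $S$ finite), together with strict monotonicity of the time coordinate along edges.
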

\begin{proof}
We observe that the sets $(\mathfrak{G}_k)_{k\geq0}$ are mutually exclusive, since {the length of} a longest directed path, if exists, is unique. Also, let $(v,T)\in\mathcal{X}$ be chosen arbitrarily. Then since $\mathcal{X}$ does not percolate, almost surely only finitely many $w\in V$ can affect $v$ in time $T$, i.e., almost surely, there exist finitely many $(w,0)\in\mathcal{X}$ each with an oriented path to $(v,T)$. We can take a maximum of the lengths of these paths and call it $k$, and thus, almost surely, $(v,T)\in\mathfrak{G}_k$. Thus $(\mathfrak{G}_k)_{k\geq0}$ is almost surely a partition of $\mathcal{X}$.
\end{proof}

\smallskip
\paragraph*{\emph{Step II: Construction of the family of processes on $\mathbb{Y}^V$}}
We now construct our process on $\mathbb{Y}^V$ along $(\mathfrak{G}_k)_{k\geq0}$, inductively. For each $v\in V$ and $x\in\mathbb{Y}^{\mathcal{N}_v}$, we define the probability kernel,
\begin{equation}\label{eq:probker}
\mu_v(x,\cdot):=\frac{\alpha_v^*(x,\cdot)+\big(c_v-\alpha_v^*(x)\big)\delta_x(\cdot)}{c_v}.
\end{equation}
The formulation of \eqref{eq:probker} is motivated by \cite[Equation $(6.9)$]{Penrose2008existence}, with necessary modifications to account for the fact that we no longer have the assumption of uniformly bounded jump rates. By \cite[Lemma $3.22$]{Kallenberg2002prob}, for each $v\in V$, there is a measurable function $\psi_v:\bBY^{\mathcal{N}_v}\times[0,1]\to \bBY^{\mathcal{N}_v}$ such that if $\omega$ is uniformly distributed over $[0, 1]$, then $\psi_v(x,\omega)$ has distribution $\mu_v(x,\cdot)$ for each $x\in \bBY^{\mathcal{N}_v}$.
 
{We will now explicitly construct the process for general systems and will subsequently remark on the changes to be made for self-updating ones.} For each $v\in V$, we set $T_0^*(v)=0$ and list $(T_j(u))_{j\in\mathbb{N},u\in\mathcal{N}_v}$ as $(T_j^*(v))_{j\in\mathbb{N}}$ in increasing order in $j$. Given $x\in \bBY^V$, we construct the process $(\xi_t^x)_{t\geq0}$ with initial value $x$ as follows. For all $v\in V$ we assume that $\xi_t^x(v)$ is right-continuous in $t$ and constant on $\big[T_i^*(v),T_{i+1}^*(v)\big)$ for each $i\in\mathbb{N}$. %Thus, for each $v\in V$, it is enough to define the process $\xi_T^x(v)$ at all $(T_i^*(v))_{i\in\mathbb{N}}$.
We define the process inductively along $(\mathfrak{G}_k)_{k\geq0}$. We start this by defining the process at all $(v,0)\in\mathfrak{G}_0$, by setting $\xi_0^x(v):=x(v)$ for all $v\in V$. Suppose, for some $k\geq0$, the process is defined for each $(v, T)\in\mathfrak{G}_k$. Let $\big(v,T_j(v)\big)\in\mathfrak{G}_{k+1}$ and  $T:=T_j(v)$. Then, we define the state of the process in $\mathcal{N}_v$ at time $T$ by,
\begin{equation}\label{eq:updaterule}
\xi_T^x|_{\mathcal{N}_v}:=\psi_v\big(\xi_{T-}^x|_{\mathcal{N}_v},{U}_j(v)\big).
\end{equation}
We observe that for each $w\in\mathcal{N}_v$, there exists $i=i_{w,T}$ such that $T:=T_i^*(w)$, and for any $T_j(w)<T$, $(w,T_j(w))$ belongs to the $k$-th generation at the latest. Therefore, $\xi_t^x(w)$ is already defined on $[T_{i-1}^*(w),T)$. Hence $\xi_{T-}^x(w)$ is well-defined.

Now, for self-updating systems, we skip the step which involves re-enumerating the $T$'s as $T^*$'s.  We construct the process $(\xi_t^x)_{t\geq0}$ similarly as in the general case, but in this case $\xi_t^x(v)$ is constant on $\big[T_i(v),T_{i+1}(v)\big)$ for each $i\in\mathbb{N}$. %and thus it is enough to define the process $\xi_T^x(v)$ at all $(v,T)\in\mathcal{X}$ only. However,
We use an inductive argument similar to that in the general case, the update rule being the same as \eqref{eq:updaterule}. The nature of the jump rate kernels (see \eqref{con:self_upd}) force the process at any $v\in V$ to update only at times $\big(T_j(v)\big)_{i\in\mathbb{N}}$. For well-definedness, we observe that for each $w\in\mathcal{N}_v$, there exists $j:=j_{w,T}$ such that $T_j(w)<T<T_{j+1}(w)$, and thus $(w,T_j(w))$ belongs to the $k$-th generation at the latest, so that $\xi_t^x(w)$ is already defined on $[T_j(w),T)$. Hence $\xi_{T-}^x(w)$ is well-defined.
\smallskip
 
\paragraph*{\emph{Step III: Showing Existence}} In the following lemma, which is based on \cite[Lemma $6.3$]{Penrose2008existence}, we show the existence part of Proposition \ref{prop:ggcexstc}, i.e., we show that the family of processes described above is Markovian, is adapted to a certain filtration, and the corresponding transition semigroup has the generator $G$ on $\mathcal{C}$. The proof follows the proof of \cite[Lemma $6.3$]{Penrose2008existence} with minor alterations to account for the possible lack of a uniform bound on the jump rates.
\begin{lemma}\label{lem:mrkv}
We assume the conditions of Proposition \ref{prop:ggcexstc} to hold. Then there exists a filtration $(\mathcal{G}_t)_{t\geq0}$ with respect to which the family of processes $(\xi^x)_{x\in \bBY^V}$ described above is a Markov family of processes on $\bBY^V$. The associated family of transition distributions induces a semigroup of operators $(P_t)_{t\geq0}$ with generator $G$ given on $\mathcal{C}$ by \eqref{eq:gendef}, i.e.,
\begin{equation*}
Gf=\lim\limits_{t\to0}t^{-1}(P_tf-f)
\end{equation*}
holds with uniform convergence for all $f\in\mathcal{C}$.
\end{lemma}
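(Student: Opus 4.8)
The plan is to follow \cite[Lemma $6.3$]{Penrose2008existence}, adapting the bookkeeping to site-dependent Poisson rates. First I would fix $(\mathcal{G}_t)_{t\ge0}$ to be the augmented right-continuous filtration generated by the restrictions of the Poisson processes $(\mathcal{P}_v)_{v\in V}$ to $[0,t]\times[0,1]$. Since the graphical construction does not percolate, $\mathbb{P}$-almost surely only finitely many vertices affect any given $(v,T)\in\mathcal{X}$ with $T\le t$ before time $t$, so the inductive construction along $(\mathfrak{G}_k)_{k\ge0}$ (Lemma \ref{lem:partition}) terminates after finitely many generations and $\xi_t^x(v)$ becomes a measurable function of the ticks of $(\mathcal{P}_u)_u$ in $[0,t]$; hence $\xi^x$ is adapted to $(\mathcal{G}_t)$ (on the percolating null event set $\xi^x\equiv x$). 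Right-continuity together with piecewise-constancy between the a.s.\ locally finite update times $(T_i^*(v))_i$ yields c\`adl\`ag paths in the product topology on $\mathbb{Y}^V$.

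Next I would prove the Markov property via the flow (cocycle) property of the construction. Because the update rule \eqref{eq:updaterule} is local in time, rebuilding the process from time $s$ out of the configuration $\xi_s^x$ using the time-shifted points $\theta_s(\mathcal{P}_v)$ reproduces the original process on $[s,\infty)$, i.e.\ $\xi_{s+t}^x=\big(\xi_t^y\big)\big|_{y=\xi_s^x}\circ\theta_s$ $\mathbb{P}$-a.s. Since $(\theta_s\mathcal{P}_v)_v$ is independent of $\mathcal{G}_s$ and equal in law to $(\mathcal{P}_v)_v$, while $\xi_s^x$ is $\mathcal{G}_s$-measurable, this gives $\mathbb{P}(\xi_{s+t}^x\in\cdot\mid\mathcal{G}_s)=\mu_t(\xi_s^x,\cdot)$, where $\mu_t(x,\cdot)$ is the law of $\xi_t^x$; the same property yields $\mu_0(x,\cdot)=\delta_x$ and the Chapman--Kolmogorov identity, so $(\mu_t)$ is a Markovian family of transition distributions, $(\xi^x)_{x\in\mathbb{Y}^V}$ is the associated Markov family of processes, and $(P_t)$ is the induced semigroup.

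Finally I would identify the generator on $\mathcal{C}$. Fix $f\in\mathcal{C}(A)$ with $A<V$; a coordinate in $A$ can change only at a tick of $\mathcal{P}_v$ with $v\in\mathcal{N}_A$, the value of such an update depends only on coordinates in $\mathcal{N}_v$, and those in turn depend only on earlier ticks in $\mathcal{N}_{\mathcal{N}_v}$, so to first order all relevant randomness lies in the ticks of $(\mathcal{P}_v)_{v\in B^*}$ with $B^*:=\mathcal{N}_{\mathcal{N}_{\mathcal{N}_A}}$, a finite set by local finiteness of $\mathcal{V}$. Conditioning on the number of ticks of $\bigcup_{v\in B^*}\mathcal{P}_v$ in $[0,t]$: with probability $1-O(t)$ there are none and $f(\xi_t^x)=f(x)$; with probability $O(t^2)$ there are at least two, contributing $O(t^2)\cdot 2\|f\|$; and with probability $\big(\sum_{v\in B^*}c_v\big)t+O(t^2)$ there is exactly one, at some $v\in B^*$, in which case --- no other $B^*$-tick occurring before it --- the state on $\mathcal{N}_v$ equals $x|_{\mathcal{N}_v}$ just before the update, is replaced by a value with law $\mu_v(x|_{\mathcal{N}_v},\cdot)$ (definition of $\psi_v$ and \eqref{eq:probker}), and is then frozen on $A$ up to time $t$; if $v\notin\mathcal{N}_A$ this leaves $A$ untouched, and if $v\in\mathcal{N}_A$ it contributes conditional expectation $\int\big(f(x|v|y)-f(x)\big)\mu_v(x|_{\mathcal{N}_v},dy)=c_v^{-1}G_vf(x)$, using \eqref{eq:genclosed} and that the $\delta_{x|_{\mathcal{N}_v}}$-atom in \eqref{eq:probker} contributes nothing. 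Since the probability that there is exactly one $B^*$-tick in $[0,t]$ and it sits at a given $v$ equals $c_vt+O(t^2)$, summing and invoking \eqref{eq:gen0} (whereby $G_vf\equiv0$ for $v\notin\mathcal{N}_A$) gives
\begin{equation*}
P_tf(x)-f(x)=t\sum_{v\in\mathcal{N}_A}G_vf(x)+O(t^2)=t\,Gf(x)+O(t^2),
\end{equation*}
with the $O(t^2)$-term controlled by $\|f\|$, $|B^*|$, $\max_{v\in B^*}c_v$ and $\max_{v\in\mathcal{N}_A}\|G_vf\|$ uniformly in $x$; hence $Gf=\lim_{t\to0}t^{-1}(P_tf-f)$ uniformly. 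The self-updating case goes through verbatim with the $1$-step graphical construction, a tick at $v$ then altering the coordinate $v$ only.

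The hard part will be the second step: verifying carefully that the generation-indexed inductive definition genuinely produces a measurable function of the Poisson points satisfying the cocycle identity $\mathbb{P}$-a.s., and that the percolating null set can be discarded without spoiling measurability, adaptedness or the semigroup relations. The generator computation is essentially routine once the finiteness of $B^*$ and $\max_{v\in B^*}c_v$ is secured --- and this is precisely where sitewise (rather than uniform) boundedness of the rates is all that is needed, since $A$, hence $B^*$, is finite.
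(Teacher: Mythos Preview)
Your proof is correct and follows essentially the same strategy as the paper's: the natural Poisson filtration, the Markov property via the shift-invariance/memorylessness of the Poisson construction, and the generator identification by conditioning on the number of ticks of a finite family of Poisson clocks in $[0,t]$. Your conditioning set $B^*=\mathcal{N}_{\mathcal{N}_{\mathcal{N}_A}}$ is in fact slightly more careful than the paper's $\mathcal{N}_A^+$: on the paper's single-tick event $F_{t,v}^{(3)}$ one does not literally have $\xi_{T_1(v)^-}^x|_{\mathcal{N}_v}=x|_{\mathcal{N}_v}$ (a tick at distance $3$ from $A$ can alter a coordinate in $\mathcal{N}_v$), whereas your larger $B^*$ rules this out; the discrepancy is $O(t^2)$ and harmless either way.
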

    
\begin{proof}
Let $\mathcal{G}_t$ denote the $\sigma$-field generated by $\big\{{\big(T_i(v),U_i(v)\big)}\big|v\in V,T_i(v)\leq  t\big\}$. By our construction of $(\xi^x)_{x\in\mathbb{Y}^V}$ and standard properties of homogeneous Poisson processes, $(\xi^x)_{x\in \bBY^V}$ are a Markov family of processes adapted to filtration $(\mathcal{G}_t)_{t\geq0}$.
Since $\bBY^V$ has product topology, sample paths are c\`adl\`ag in each copy of $\bBY$, and {since }there are no limit points of $\big(T_i(v)\big)_{i\in\mathbb{N}}$ at any site $v\in V$, sample paths in $\bBY^V$ are c\`adl\`ag.

Let $(P_t)_{t\geq0}$ be the transition semigroup corresponding to $(\xi^x)_{x\in \bBY^V}$. We will now show for general systems that $G$ is the generator of  $(P_t)_{t\geq0}$ on $\mathcal{C}$. For every $f\in\mathcal{C}$ and $x\in \bBY^V$,
\begin{equation}\label{eq:semig_equiv}
P_tf(x)-f(x)=\mathbb{E}\big[f(\xi_t^x)-f(x)\big].
\end{equation}
We choose $A<V$ such that $f\in\mathcal{C}(A)$. Then $f(\xi_t^x)\neq f(x)$ only if $\min\{T_1(v),v\in\mathcal{N}_A\}\leq t$. Accordingly, we consider the events $F_t^{(1)}:=\big\{T_1(v)\leq t\text{ for two or more }v\in\mathcal{N}_A^+\big\}$, $F_t^{(2)}:=\big\{T_2(v)\leq t\text{ for some }v\in\mathcal{N}_A\big\}$, and, for each $v\in\mathcal{N}_A$, $F_{t,v}^{(3)}:=\big\{T_1(v)\leq t < T_2(v)\text{ and }T_1(w)>t\text{ for }w\in\mathcal{N}_A^+\setminus\{v\}\big\}$. Let $F_t:=F_t^{(1)}\cup F_t^{(2)}$. We claim that,
\begin{equation}\label{eq:claimet}
\lim\limits_{t\to0}t^{-1}\mathbb{P}(F_t)=0, 
\end{equation}
and that for any $v\in\mathcal{N}_A$,
\begin{equation}\label{eq:claimetv}
\lim\limits_{t\to0}t^{-1}\mathbb{P}(F_{t,v}^{(3)})=c_v.
\end{equation}
We assume \eqref{eq:claimet} and \eqref{eq:claimetv} are true and complete the proof. Indeed, we observe, {using \eqref{eq:probker},}
\begin{align}\label{eq:gencv}
&\mathbb{E}[f(\xi_t^x)-f(x)|F_{t,v}^{(3)}]=\int_{\bBY^{\mathcal{N}_v}}\big(f(x|v|y)-f(x)\big)\mu_v(x|_{\mathcal{N}_v},dy)\nonumber\\&
=c_v^{-1}\int_{\bBY^{\mathcal{N}_v}}\big(f(x|v|y)-f(x)\big)\alpha_v^*(x|_{\mathcal{N}_v},dy)=c_v^{-1}G_vf(x).\quad\text{(using \eqref{eq:genclosed})}
\end{align}
We note that $F_t\cap F_{t,v}^{(3)}=\emptyset$ for every $v\in\mathcal{N}_A$, and $\big\{f(\xi_t^x)\neq f(x)\big\}\subset F_t\cup\big(\bigcup_{v\in\mathcal{N}_A}F_{t,v}^{(3)}\big)$. Therefore we use successively \eqref{eq:semig_equiv}, the law of total {expectation}, \eqref{eq:gencv}, and \eqref{eq:gen0} to obtain, for every $f\in\mathcal{C}(A)$,
\begin{align}
&\lim\limits_{t\to0}\sup_{x\in\mathbb{Y}^V}\big|t^{-1}\big(P_tf(x)-f(x)\big)-Gf(x)\big|\nonumber\\&
=\lim\limits_{t\to0}\sup_{x\in\mathbb{Y}^V}\bigg|t^{-1}\mathbb{P}(F_t)\mathbb{E}\big[f(\xi_t^x)-f(x)|F_t\big]+\sum\limits_{v\in\mathcal{N}_A}\Big(c_v^{-1}t^{-1}\mathbb{P}\big(F_{t,v}^{(3)}\big)-1\Big)G_vf(x)\bigg|\quad(\text{using }\eqref{eq:gendef})\nonumber\\&
\leq 2||f||\bigg(\lim\limits_{t\to0}t^{-1}\mathbb{P}(F_t) + c_{\mathcal{N}_A}\sum\limits_{v \in \mathcal{N}_A}\big|c_v^{-1}\lim\limits_{t\to0}t^{-1}\mathbb{P}\big(F_{t,v}^{(3)}\big)-1\big|\bigg)\quad\text{(using \eqref{eq:genbd})}\nonumber\\&
=2||f||\bigg(0+  c_{\mathcal{N}_A}\sum \limits_{v \in \mathcal{N}_A}\big(c_v^{-1}c_v-1 \big) \bigg)=0.\quad\text{(using \eqref{eq:claimet} and \eqref{eq:claimetv})}\label{eq:final}
\end{align}
This shows that the generator of the semigroup $(P_t)_{t\geq0}$ on $\mathcal{C}$ is $G$.

Now, we prove \eqref{eq:claimet} and \eqref{eq:claimetv}, and this will complete the proof. We first observe that for any $W<V$, since the Poisson processes $\big\{\big(T_i(v)\big)_{i\in\mathbb{N}}\big\}_{v\in W}$ are independent, their superposition gives us a new Poisson process which we denote as $\big(T_i(W)\big)_{i\in\mathbb{N}}$ with rate $\widehat{c}_W:=\sum_{w\in W}c_w<\infty$ (since $|W|<\infty$). Now, for any $t>0$, $F_t^c=F_{t,(1)}\cup F_{t,(2)}\cup F_{t,(3)}$, where $F_{t,(1)}:=\big\{T_1(v)>t\text{ for all }v\in\mathcal{N}_A^+\big\}$, $F_{t,(2)}:=\big\{T_1(v)\leq t\text{ for exactly one }v\in\mathcal{N}_A^+\setminus\mathcal{N}_A\text{ and }T_1(w)>t\text{ for all }w\in\mathcal{N}_A\big\}$, and $F_{t,(3)}:=\big\{T_1(v)\leq t<T_2(v)\text{ for exactly one }v\in\mathcal{N}_A\text{ and }T_1(w)>t\text{ for all }w\in\mathcal{N}_A^+\setminus\mathcal{N}_A\big\}$, and $F_{t,(1)},\,F_{t,(2)},\,F_{t,(3)}$ are mutually exclusive. Using standard properties of the Poisson process, one can easily verify that, $\mathbb{P}\big(F_{t,(1)}\big)=e^{-t\widehat{c}_{\mathcal{N}_A^+}}$, $\mathbb{P}\big(F_{t,(2)}\big)=\sum_{v\in\mathcal{N}_A^+\setminus\mathcal{N}_A}(1-e^{-c_vt})\cdot e^{-t\widehat{c}_{\mathcal{N}_A^+\setminus\{v\}}}$, and $\mathbb{P}\big(F_{t,(3)}\big)=t\sum_{v\in\mathcal{N}_A}e^{-c_vt}c_v\cdot e^{-t\widehat{c}_{\mathcal{N}_A^+\setminus\{v\}}}$. {Also, for any $t\geq0$, any $v\in\mathcal{N}_A$, $\mathbb{P}\big(F_{t,v}^{(3)}\big)=c_vte^{-tc_v}\cdot e^{-t\widehat{c}_{\mathcal{N}_A^+\setminus\{v\}}}$.} Thus, $\mathbb{P}(F_t)=o(t)$ and $\mathbb{P}(F_{t,v}^{(3)})=c_vt+o(t)$, and hence \eqref{eq:claimet} and \eqref{eq:claimetv} follow. This completes the proof of the lemma for the general case.

For the self-updating case, once again, since for any $f\in\mathcal{C}$, $f\in\mathcal{C}(A)$ for some $A<V$. Then $f(\xi_t^x)\neq f(x)$ only if $\min\{T_1(v),v\in A\}\leq t$. Accordingly, we consider the events $\overline{F}_t^{(1)}:=\big\{T_1(v)\leq t\text{ for two or more }v\in\mathcal{N}_A\big\}$, $\overline{F}_t^{(2)}:=\big\{T_2(v)\leq t\text{ for some }v\in A\big\}$, and, for each $v\in A$, $\overline{F}_{t,v}^{(3)}:=\big\{T_1(v)\leq t\leq T_2(v)\text{ and }T_1(w)>t\text{ for }w\in\mathcal{N}_A\setminus\{v\}\big\}$, and complete the proof along the same lines as above.
\end{proof}

\smallskip
\paragraph*{\emph{Step IV: Showing Convergence -- Construction of the family of ``truncated" processes on $\mathbb{Y}^V$}} Now we will prove the convergence part of Proposition \ref{prop:ggcexstc}, i.e., we will show that the semigroup $(P_t)_{t\geq0}$ satisfies \eqref{eq:convsg}. 

For $W<V$, we recall that $G_W$ is the generator of a jump process with jump rate kernel $\sum_{v\in{W}}\alpha_v$. Intuitively, this is the jump process with the Poisson clocks outside $W$ are all ``switched off", and therefore, with no jumps taking place in $V\setminus\mathcal{N}_W$. We construct such a family of ``truncated" jump processes as follows by setting $\mathcal{X}_W:=\mathcal{X}\cap\big(W\times[0,\infty)\big)$ (we use the same family of Poisson processes $(\mathcal{P}_v)_{v\in V}$ for every $W<V$),  and partitioning $\mathcal{X}_A$ in exactly the same manner as for $\mathcal{X}$ described in Step I (before Lemma \ref{lem:partition}). Then using the updating rule \eqref{eq:updaterule} but now restricting attention to arrival times $\big(T_j(v)\big)_{v\in W}$, we obtain a pure jump process $(\xi_t^{W,x})_{t\geq0}$ with generator $G_W${, and transition semigroup $(P_t^W)_{t\geq0}$}.

\begin{proof}[Proof of Proposition \ref{prop:ggcexstc}.] In view of Lemma \ref{lem:mrkv}, it only remains to be shown that $(P_t)_{t\geq0}$ satisfies \eqref{eq:convsg}. We first discuss the case of general systems. Let $f\in\mathcal{C}$. Then $f\in\mathcal{C}(A)$ for some $A< V$. Now, for every $(v,t)\in V\times[0,\infty)$, we define $C_{v,t}:=\big\{w\in V\big|E_t(w,z)\text{ for some }z\in\mathcal{N}_v^+\big\}$ and call it the $(v,t)$-\emph{cluster}\index{graphical construction!cluster}. It can be proved that (see \cite[Lemma $6.4$]{Penrose2008existence}) for any $(v,t)\in V\times[0,\infty)$, $x\in \bBY^V$, and $W< V$, if $C_{v,t}\subseteq W$, then $\xi_t^{W,x}(w)=\xi_t^x(w)$ for all $w\in\mathcal{N}_v$. Further, since the graphical construction does not percolate, it follows that $\mathbb{P}\big(|C_{v,t}|<\infty\big)=1$ for every $(v,t)\in V\times[0,\infty)$.  This in turn implies that for any sequence $(W_m)_{m\geq1}$ of finite subsets of $V$ with $\liminf\limits_{m\to\infty}W_m=V$
and for all $v\in A$, $\xi_t^{W_m,x}(v)=\xi_t^x(v)$ for large enough $m$, almost surely. This is because, $\cup_{v\in A}C_{v,t}$ is almost surely finite and is thus almost surely contained in $W_m$ for large enough $m$. Hence, $f\big(\xi_t^{W_m,x}\big)=f(\xi_t^x)$ for large enough $m$, almost surely. Hence the dominated convergence theorem gives us \eqref{eq:convsg}, i.e.,
\begin{equation*}
P_tf(x)=\mathbb{E}[f(\xi_t^x)]=\lim\limits_{m\to\infty}\mathbb{E}[f(\xi_t^{W_m,x})]=\lim\limits_{m\to\infty}P_t^{W_m}(x)
\end{equation*}
 
We now remark about the self-updating systems. For such systems, for every $(v,t)\in V\times[0,\infty)$, we define $C_{v,t}:=\{w\in V|E_t(w,z)\text{ for some }z\in\mathcal{N}_v\}$. This implies that for any $(v,t)\in V\times[0,\infty)$, $x\in \bBY^V$, and $A< V$, if $C_{v,t}\subseteq A$, then $\xi_t^{A,x}(v)=\xi_t^x(v)$. Rest of the proof is same as above.
\end{proof}

\section*{Acknowledgments}
\noindent The author would like to thank his PhD supervisor Dr. D. Yogeshwaran for his support and guidance. The author would also like to thank Dr. Parthanil Roy (Indian Statistical Institute, Bangalore Center), Dr. Manjunath Krishnapur (Indian Institute of Science, Bangalore), Dr. Sarvesh R. Iyer (Ashoka University, India), Dr. Bart\l omiej B\l aszczyszyn (Inria Paris Centre), Dr. Ali Khezeli (Inria Paris Centre) and Dr. Maria Eul\'alia-Vares (Federal University of Rio de Janeiro) for their constructive comments that improved the quality of this paper.

\section*{Funding}
\noindent The author gratefully acknowledges funding from Indian Statistical Institute through the Senior Research Fellowship.

\printnomenclature

\bibliographystyle{alpha}
\bibliography{IPS.bib}

\end{document}